\theoremstyle{definition}
\newtheorem{theo}{Theorem}[section]
\newtheorem{defi}[theo]{Definition}
\newtheorem{prop}[theo]{Proposition}
\newtheorem{cor}[theo]{Corollary}
\newtheorem{lemma}[theo]{Lemma}
\newtheorem{exa}[theo]{Example}
\newtheorem{rem}[theo]{Remark}
\newtheorem{nota}[theo]{Notation}
\numberwithin{equation}{section}
\newcommand{\N}{{\mathbb N}}
\newcommand{\F}{{\mathbb F}}
\newcommand{\Z}{{\mathbb Z}}
\newcommand{\cA}{{\mathcal A}}
\newcommand{\cB}{{\mathcal B}}
\newcommand{\cC}{{\mathcal C}}
\newcommand{\cF}{{\mathcal F}}
\newcommand{\cI}{{\mathcal I}}
\newcommand{\cL}{{\mathcal L}}
\newcommand{\cM}{{\mathcal M}}
\newcommand{\cN}{{\mathcal N}}
\newcommand{\cO}{{\mathcal O}}
\newcommand{\cP}{{\mathcal P}}
\newcommand{\cQ}{{\mathcal Q}}
\newcommand{\cU}{{\mathcal U}}
\newcommand{\cV}{{\mathcal V}}
\newcommand{\cW}{{\mathcal W}}
\newcommand{\cZ}{{\mathcal Z}}
\newcommand{\qM}{$q$-matroid}
\newcommand{\GL}{\mbox{\rm GL}}
\newcommand{\rk}{\mbox{\rm rk}\,}
\newcommand{\rs}{\mbox{\rm rowsp}}
\newcommand{\Hyp}{\mbox{\rm Hyp}}
\newcommand{\M}{\mbox{\rm M}}
\newcommand{\W}{\mbox{\rm W}}
\newcommand{\Config}{\mbox{\rm Config}}
\newcommand{\CConfig}{\mbox{$\mbox{\rm Config}^*_{\cP}$}}
\newcommand{\CF}{\mbox{\rm CF}}
\newcommand{\cl}{\text{cl}}
\newcommand{\cyc}{\text{cyc}}
\newcommand{\T}{\mbox{$\!^{\sf T}$}}
\newcommand{\subspace}[1]{\mbox{$\langle{#1}\rangle$}}
\newcommand{\inner}[2]{\mbox{$\langle{#1}\!\mid\!{#2}\rangle$}}
\newcommand{\BinomS}[2]{\genfrac{(}{)}{0pt}{2}{#1}{#2}}
\newcommand{\Gaussian}[2]{\genfrac{[}{]}{0pt}{1}{#1}{#2}}
\newcommand{\GaussianD}[2]{\genfrac{[}{]}{0pt}{0}{#1}{#2}}
\newcommand{\CloudU}{\mbox{$\mbox{\rm cloud}^{\uparrow}$}}
\newcommand{\FlockD}{\mbox{$\mbox{\rm flock}^{\downarrow}$}}
\newenvironment{liste}{\begin{list}{--\hfill}{\topsep-1.5ex \labelwidth.4cm
   \leftmargin.5cm \labelsep.1cm \rightmargin0cm \parsep0ex \itemsep.6ex
   \partopsep-1.5ex}}{\end{list}}
\newcounter{alp}
\newcounter{ara}
\newcounter{rom}
\newenvironment{alphalist}{\begin{list}{(\alph{alp})\hfill}{\usecounter{alp}
     \topsep-1.5ex \labelwidth.7cm \leftmargin.7cm \labelsep0cm
     \rightmargin0cm \parsep0ex \itemsep0ex
     \partopsep-1.5ex}}{\end{list}}
\newenvironment{arabiclist}{\begin{list}{(\arabic{ara})\hfill}{\usecounter{ara}
     \topsep-1.5ex \labelwidth.7cm \leftmargin.7cm \labelsep0cm
     \rightmargin0cm \parsep0ex \itemsep0ex
     \partopsep-1.5ex}}{\end{list}}
\begin{document}

\title{The Cloud and Flock Polynomials of $q$-Matroids}
\author{Heide Gluesing-Luerssen\thanks{Department of Mathematics, University of Kentucky, Lexington KY 40506-0027, USA; heide.gl@uky.edu. HGL is the corresponding author.}\quad and Benjamin Jany\thanks{Department of Mathematics and Computer Science, Eindhoven University of Technology, the Netherlands; b.jany@tue.nl. BJ is supported by the Dutch Research Council through grant VI.Vidi.203.045.}
}

\date{April 7, 2025}
\maketitle
	
\begin{abstract}
\noindent
We show that the Whitney function of a \qM{} can be determined from the cloud and flock polynomials associated to the 
cyclic flats. 
These polynomials capture information about the corank (resp., nullity) of certain spaces whose cyclic core (resp., closure) is the given cyclic flat.
Going one step further, we prove that the Whitney function, and in fact the cloud-flock lattice, are determined by the configuration of the \qM{}, which is the abstract lattice of cyclic flats together with the corank-nullity data.
Furthermore, we show that the configuration and cloud-flock lattice behave well under duality and direct sums,
whereas the Whitney function does not contain enough information to behave well under taking direct sums.
As an aside we show that every configuration of a matroid arises as a configuration of a \qM{}, whereas the converse is not true. 
\end{abstract}

\textbf{Keywords:} $q$-matroid, Whitney function, cyclic flats, cloud polynomial, flock polynomial.

\section{Introduction}\label{S-Intro}

In this paper we investigate certain (polynomial) invariants of \qM{}s.  
Since their introduction in 2018 in~\cite{JuPe18}, \qM{}s have attracted a lot of attention due to their close relationship to rank-metric codes~\cite{JuPe18,GJLR19,Shi19,GhJo20,JPV23,GLJ22Gen}.
The theory of \qM{}s, despite its young age, has made substantial progress in the areas of cryptomorphisms~\cite{BCJ22}, 
subspace designs~\cite{BCIJ24}, and structural properties~\cite{Ja23,AlBy24,CeJu24,GLJ24DSCyc}.
In the notoriously hard area of representability some first results have been obtained in \cite{GLJ22Rep,AJNZ24}, but unfortunately even questions with simple answers in matroid theory have proven to be extremely challenging.

It is well-known that for matroids the Whitney function, which is the bivariate generating function of the corank-nullity pairs over all subsets, is related to the Tutte polynomial via a simple variable shift. 
The two polynomials were introduced independently for graphs by Tutte and count different invariants. 
Crapo~\cite{Cra69} generalized them to matroids and established their simple shift relation. 
The Tutte polynomial (and thus the Whitney function) satisfies certain fundamental recursions with respect to matroid minors.
In fact, together with a normalization these recursions define the Tutte polynomial. 

For \qM{}s, the Whitney function can be defined exactly as for matroids.
However, it turns out that it does not satisfy the $q$-analogues of the above-mentioned recursions.
To be more precise, it is not a $q$-Tutte-Grothendieck invariant in the sense of \cite{ByFu25}. 
For this reason, in \cite{ByFu25} the authors search for such a $q$-Tutte Grothendieck invariant for \qM{}s.
They succeed in doing so under the assumption that the subspace lattice of the \qM{} admits a certain type of partition (which unfortunately is not always the case). 
They further show that there exists a bijection in $\Z[x,y]$ relating the Whitney function and the Tutte polynomial. 
This bijection can be expressed as a $\Z$-linear substitution of the monomials $x^iy^j$, and thus differs substantially 
from the simple shift relation in the matroid case. 

In this paper we focus on the Whitney function and introduce further invariants for \qM{}s. 
A prominent role will be played by the collection of cyclic flats.
It is well known \cite{AlBy24,GLJ24DSCyc} that the collection $\cZ(\cM)$ of cyclic flats of a given \qM{}~$\cM$ together with the rank values of the cyclic flats fully determine~$\cM$.
Following the study of matroids in \cite{PlBae14,Ebe14}, we introduce the cloud and flock polynomials associated with the cyclic flats of a \qM{}.
The cloud polynomial associated with $Z\in\cZ(\cM)$ is the generating function for the corank of all flats whose cyclic core is~$Z$, while the flock polynomial is the generating function for the nullity of all the spaces whose closure is~$Z$.
Making use of the powerful properties of~$\cZ(\cM)$, we show
in \cref{C-CloudFlock2} that the collection of all cloud and flock polynomials determines the Whitney function.
This provides us with a $q$-analogue of the corresponding result for matroids \cite{PlBae14,Ebe14}, but it takes a more complicated form than for matroids.
This has the consequence that, in general, the Whitney function of a direct sum $\cM_1\oplus\cM_2$ is not determined by the Whitney functions of 
the summands~$\cM_1$ and~$\cM_2$; see \cref{E-WhitDS}.
This stands in striking contrast to the simple product formula for direct sums of matroids.
Moreover -- as for matroids -- the Whitney function does not determine the cloud and flock polynomials, and actually 
it does not even determine the number of cyclic flats. 

In addition, we introduce the configuration and cloud-flock lattice of a \qM{} (see \cite{Ebe14} for configurations of matroids).
The former is defined by replacing in the lattice of cyclic flats each actual cyclic flat~$Z$ by its corank-nullity pair, while in the latter we replace~$Z$ by its pair of 
cloud and flock polynomial.
Obviously, the configuration contains much less information than the lattice of cyclic flats and, unsurprisingly,
there exist non-equivalent \qM{}s with the same configuration.
Since the degrees of the cloud and flock polynomial associated with~$Z$ are the corank and nullity of~$Z$, the cloud-flock lattice trivially 
determines the configuration.
It turns out that the converse is true as well: the configuration fully determines the cloud-flock lattice and thus the Whitney function.
This is worked out in two steps.
First, in \cref{T-ConfigWhitney} we establish the result for \emph{full} \qM{}s (the $q$-analogue of loopless and coloopless matroids).
Thereafter, we make use of the fact that a non-full \qM{} can be decomposed into the direct sum of a full, a trivial, and a free \qM{}~\cite{GLJ24DSCyc}.
With some lengthy, yet straightforward computations about the relevant invariants of direct sums involving a trivial or free summand, we are then able to prove the general case in \cref{T-ConfigWhitneyGen}.

Just as for matroids, the converse of the just mentioned result is not true, that is, the Whitney function does not determine the configuration -- simply because it does not even determine the number of cyclic flats.
However, as we show in \cref{T-Extremal}, some partial information about the cyclic flats can be retrieved from the Whitney function.

In Corollaries~\ref{C-DualConfig} and \ref{C-DualCF} we show that the configuration and the cloud-flock lattice of a \qM{}~$\cM$ fully determines the respective information for 
the dual~$\cM^*$. However, the duality relation is not as simply as the well-known identity for the Whitney function.
We discuss this at the end of \cref{S-Config}.
In \cref{T-DirSumCF} we establish that the configurations and the cloud-flock lattices of~$\cM_1$ and~$\cM_2$ determine the same 
information for $\cM_1\oplus\cM_2$.

In \cref{S-ConfigMatqMat} we take a brief excursion and show that every finite lattice arises as the lattice of cyclic flats of a \qM{}. 
The proof will make use of the same result for matroids. This will allow us to show that every configuration of a matroid arises as the configuration of a \qM{}, while the converse is not true.

Finally, in \cref{S-Condensed} we introduce condensed configurations. 
They consist of data pertaining to specific partitions of the cyclic flats.
These partitions and their data capture information about the corank-nullity pairs of the cyclic flats and their containment relations.
This information is not sufficient to reconstruct the configuration.
But it is sufficient to fully determine the Whitney function; see \cref{T-CondensWhitney}.
The converse is not true.
This section closely follows the matroid case developed in~\cite{Ebe14}.

\textbf{Notation:} Throughout, we let $\F=\F_q$.
For any finite-dimensional $\F$-vector space~$E$ we define $\cL(E)$ as the subspace lattice of~$E$.
For a matrix $M\in\F^{k\times n}$ we use $\rs(M)\subseteq\F^n$ for the row space of~$M$.
Furthermore, we denote by $\Hyp(V)$ the set of all hyperplanes of~$V$.
These are the codimension-1 subspaces of~$V$ (and must not be confused with hyperplanes in the ($q$-)matroid sense).
We set $[n]=\{1,\ldots,n\}$ and $[n]_0=\{0,1,\ldots,n\}$.
The abbreviation RREF stands for reduced row echelon form.

\section{Basic Notions of $q$-Matroids}

In this section we collect some basic notions and facts for \qM{}s.

\begin{defi}\label{D-qMatroid}
A \textbf{$q$-matroid with ground space~$E$} is a pair $\cM=(E,\rho)$, where~$E$ is a finite-dimensional $\F$-vector space
and $\rho: \cL(E)\longrightarrow\N_{\geq0}$ is a map satisfying
\begin{liste}
\item[(R1)\hfill] Dimension-Boundedness: $0\leq\rho(V)\leq \dim V$  for all $V\in\cL(E)$;
\item[(R2)\hfill] Monotonicity: $V\leq W\Longrightarrow \rho(V)\leq \rho(W)$  for all $V,W\in\cL(E)$;
\item[(R3)\hfill] Submodularity: $\rho(V+W)+\rho(V\cap W)\leq \rho(V)+\rho(W)$ for all $V,W\in\cL(E)$.
\end{liste}
We call $\rho(V)$ the \textbf{rank of}~$V$ and $\rho(\cM):=\rho(E)$ the rank of the \qM.
\end{defi}

\begin{defi}\label{D-Equiv}
Two \qM{}s $\cM_i=(E_i,\rho_i)$ are \textbf{equivalent}, denoted by $\cM_1\approx\cM_2$, if there exists an $\F$-isomorphism $\alpha:E_1\longrightarrow E_2$ such that $\rho_2(\alpha(V))=\rho_1(V)$ for all $V\in\cL(E_1)$.
\end{defi}

The following notions are crucial in any study of \qM{}s.

\begin{defi}\label{D-FlatsOpenCyclic}
Let $\cM=(E,\rho)$ be a \qM. 
A subspace $V\in\cL(E)$ is \textbf{independent} if $\rho(V)=\dim V$ and \textbf{dependent} otherwise.
A dependent space all of whose proper subspaces are independent is called a \textbf{circuit}.
The subspace~$V$ is a \textbf{flat} if it is inclusion-maximal in the set $\{W\in\cL(E)\mid \rho(W)=\rho(V)\}$, in other words $\rho(V)<\rho(V+\subspace{x})$ for all $x\in E\setminus V$. 
Moreover,~$V$ is \textbf{cyclic} (or \textbf{open}) if $\rho(W)=\rho(V)$ for all $W\in\Hyp(V)$. 
We set $\cF(\cM)=\{F\in\cL(E)\mid F\text{ flat in }\cM\}$ and 
$\cO(\cM)=\{O\in\cL(E)\mid O\text{ cyclic in }\cM\}$ and 
\[
    \cZ(\cM)=\cF(\cM)\cap\cO(\cM)=\{Z\in\cL(E)\mid Z\text{ cyclic flat in }\cM\}.
\]
We call~$\cM$ \textbf{full} if $0$ and $E$ are cyclic flats.
\end{defi}

Note that a full \qM{} is the $q$-analogue of a loopless and coloopless matroid; 
see also \cite[Rem.~7.8]{GLJ24DSCyc} for a more detailed discussion.

It is well known that the collection of independent spaces uniquely determines the $q$-matroid.
The same is true for the flats, dependent spaces, open spaces, and circuits.
We refer to \cite[Thm.~8]{JuPe18} and \cite{BCJ22} for the according cryptomorphisms.
Moreover, the cyclic flats together with their rank values also uniquely determine the $q$-matroid.
Indeed, 
\begin{equation}\label{e-RhoqM}
   \rho(V)=\min_{Z\in\cZ(\cM)}\Big(\rho(Z)+\dim((V+Z)/Z)\Big) \ \text{ for all }V\in\cL(E);
\end{equation}
see \cite[Cor.~4.6]{GLJ24DSCyc}. 
The resulting cryptomorphism can be found in \cite[Cor.~4.12]{AlBy24}.

A large class of $q$-matroids are the representable ones.

\begin{theo}[\mbox{\cite[Sec.~5]{JuPe18}}]\label{T-ReprqMatr}
Let $\F_{q^m}$ be a field extension of $\F=\F_q$ and let $G\in\F_{q^m}^{k\times n}$.
Define the map $\rho:\cL(\F^n)\longrightarrow\N_0$ via
\[
    \rho(\rs(Y))=\rk(GY\T)\ \text{ for all matrices $Y\in\F^{t\times n}$ and all $t\in\{0,\ldots,n\}$}
\]
(where the rank is over $\F_{q^m}$).
Then~$\rho$ is well-defined and $\cM_G:=(\F^n,\rho)$ is a $q$-matroid. It is called the $q$-matroid \textbf{represented by $G$}.
A $q$-matroid $\cM$ of rank~$k$ with ground space~$E$ is called \textbf{$\F_{q^m}$-representable} if $\cM\approx\cM_G$ for some matrix~$G\in\F_{q^m}^{k\times n}$.
\end{theo}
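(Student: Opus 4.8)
\quad The plan is to check that $\rho$ is well-defined and then verify (R1)--(R3), noting in advance that only submodularity (R3) carries genuine content and that it reduces to the dimension formula for subspaces of $\F_{q^m}^k$.

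First I would reformulate $\rho$ intrinsically so that well-definedness becomes transparent. The matrix~$G$ induces an $\F$-linear map $\sigma:\F^n\longrightarrow\F_{q^m}^k$, $v\mapsto Gv\T$, and for $V\in\cL(\F^n)$ I would set $\rho(V):=\dim_{\F_{q^m}}\subspace{\sigma(V)}$, where $\subspace{\,\cdot\,}$ denotes the $\F_{q^m}$-linear span inside $\F_{q^m}^k$. This manifestly depends only on~$V$, and if $V=\rs(Y)$ for some $Y\in\F^{t\times n}$ then $\subspace{\sigma(V)}=\cs(GY\T)$, so $\rho(\rs(Y))=\rk(GY\T)$; in particular the right-hand side of the displayed formula depends only on $\rs(Y)$ and agrees with this intrinsic~$\rho$, which settles well-definedness.

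Next I would dispatch the easy axioms. For (R1): $\subspace{\sigma(V)}$ is spanned by the $\sigma$-images of a basis of~$V$, hence $0\le\rho(V)\le\dim V$. For (R2): $V\le W$ forces $\sigma(V)\subseteq\sigma(W)$, hence $\subspace{\sigma(V)}\subseteq\subspace{\sigma(W)}$ and $\rho(V)\le\rho(W)$. For (R3): since $\sigma$ is $\F$-linear, $\sigma(V+W)=\sigma(V)+\sigma(W)$, so $\subspace{\sigma(V+W)}=\subspace{\sigma(V)}+\subspace{\sigma(W)}$; and $\sigma(V\cap W)\subseteq\sigma(V)\cap\sigma(W)$, so, using that $\subspace{\sigma(V)}\cap\subspace{\sigma(W)}$ is an $\F_{q^m}$-subspace, $\subspace{\sigma(V\cap W)}\subseteq\subspace{\sigma(V)}\cap\subspace{\sigma(W)}$. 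Plugging this into the dimension formula in $\F_{q^m}^k$ yields
\[
  \rho(V+W)+\rho(V\cap W)\le\dim\!\big(\subspace{\sigma(V)}+\subspace{\sigma(W)}\big)+\dim\!\big(\subspace{\sigma(V)}\cap\subspace{\sigma(W)}\big)=\rho(V)+\rho(W),
\]
which is (R3). The final sentence of the statement is then just the definition of $\F_{q^m}$-representability.

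The only point needing a little care is the bookkeeping with $\F_{q^m}$-spans of $\F$-subspaces — in particular the inclusion $\subspace{\sigma(V\cap W)}\subseteq\subspace{\sigma(V)}\cap\subspace{\sigma(W)}$, where equality may genuinely fail; this is precisely why submodularity comes out as an inequality rather than an identity, so I would not expect a deeper obstacle here. An equivalent route is to extend scalars, replacing $V$ by $\hat V:=V\otimes_\F\F_{q^m}\le\F_{q^m}^n$, writing $\rho(V)=\dim_{\F_{q^m}}\hat\sigma(\hat V)$ for the $\F_{q^m}$-linear extension $\hat\sigma$ of $\sigma$, and using $\widehat{V+W}=\hat V+\hat W$ together with $\widehat{V\cap W}=\hat V\cap\hat W$; I would prefer the first formulation only because it avoids invoking flatness of $\F_{q^m}/\F$.
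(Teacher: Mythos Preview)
Your argument is correct and is the standard one: reinterpret $\rho(V)$ as the $\F_{q^m}$-dimension of the span of the image of~$V$ under the $\F$-linear map $v\mapsto Gv\T$, after which (R1) and (R2) are immediate and (R3) follows from the dimension formula in~$\F_{q^m}^k$ together with the inclusion $\subspace{\sigma(V\cap W)}\subseteq\subspace{\sigma(V)}\cap\subspace{\sigma(W)}$. There is nothing to compare against here: the paper does not supply its own proof of this theorem but merely quotes it from \cite[Sec.~5]{JuPe18}, where essentially the same verification is carried out.
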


The row space of~$G$ is known as the rank-metric code generated by~$G$.
Many invariants of the code, but not all, can be determined by the associated $q$-matroid.
We refer to \cite{JuPe18,GJLR19,BCIJ24,GLJ22Gen} and Section~6 in the survey \cite{Gor21} for further details.

\begin{exa}\label{E-UnifRepr}
Let $k\in[n]_0$. 
The \textbf{uniform \qM{} of rank~$k$} with $n$-dimensional ground space~$E$ is defined to be $(E,\rho)$, where  $\rho(V)=\min\{k,\dim V\}$ for all 
$V\in\cL(E)$. 
It is denoted by $\cU_{k,n}(E)$ or simply $\cU_{k,n}$ (in which case the underlying field~$\F_q$ needs to be clear from the context).
Note that $\cU_{0,n}$ is represented by the $1\times n$-zero matrix and 
$\cU_{n,n}$ by the $n\times n$-identity matrix.
For $0<k<n$, the uniform \qM{} $\cU_{k,n}$ is representable over $\F_{q^m}$ if and only if $m\geq n$; see \cite[Rem.~3.10]{Gor21}.
We call $\cU_{0,n}$ and $\cU_{n,n}$ the \textbf{trivial} and the \textbf{free} \qM{}, respectively.
\end{exa}

Just like for matroids, not every $q$-matroid is representable. Examples can be found in \cite[Sec.~4]{GLJ22Gen} and \cite[Sec.~3.3]{CeJu24}. See also \cref{T-DSProperties}(b) below.

Restriction and contraction of \qM{}s are defined in the usual way.

\begin{defi}\label{D-RestrContr}
Let $\cM=(E,\rho)$ be a \qM{} and $X\in\cL(E)$.
The \textbf{restriction of~$\cM$ to~$X$} and the \textbf{contraction of~$X$ from~$\cM$}
are defined as the \qM{}s $\cM|X=(X,\hat{\rho})$ with $\hat{\rho}(V)=\rho(V)$ for all $V\leq X$ and
$\cM/X=(E/X,\tilde{\rho})$ with $\tilde{\rho}(V/X)=\rho(V)-\rho(X)$ for all $V\in\cL(E)$ containing~$X$, respectively.
\end{defi}

Occasionally we will make use of the dual \qM{}.
The following can be found for instance in \cite[Def.~41, Thm.~42]{JuPe18}, where duality is based on a lattice involution, 
or in \cite[Thm.~2.8]{GLJ22Gen} based on a non-degenerate symmetric bilinear form on~$E$. 
In~\cite{GLJ22Gen} it is also explained that, while~$\cM^*$ depends on the choice of the bilinear form, it is 
unique up to equivalence.

\begin{theo}\label{T-DualqM}
Let $\inner{\cdot}{\cdot}$ be a non-degenerate symmetric bilinear form (NSBF) on~$E$, and for $V\in\cL(E)$ set
$V^\perp=\{w\in E\mid \inner{v}{w}=0\text{ for all }v\in V\}$.
Given a \qM{} $\cM=(E,\rho)$. Then
$\rho^*(V)=\dim V+\rho(V^\perp)-\rho(E)$
defines a rank function and thus $\cM^*=(E,\rho^*)$ is a \qM{}, called the \textbf{dual} of~$\cM$ with respect 
to the chosen NSBF.
Furthermore, $\cM^{**}=\cM$, where $\cM^{**}=(\cM^*)^*$ is the bidual.
\end{theo}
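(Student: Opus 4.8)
The plan is to check directly that $\rho^*$ satisfies the axioms (R1)--(R3) of \cref{D-qMatroid}, and then to obtain $\cM^{**}=\cM$ by a one-line substitution. The only inputs beyond the axioms for~$\rho$ are the standard properties of the orthogonal complement for an NSBF on a finite-dimensional space: the assignment $V\mapsto V^\perp$ is an inclusion-reversing involution on $\cL(E)$ with $\dim V^\perp=\dim E-\dim V$ and $V^{\perp\perp}=V$, and it satisfies the De Morgan rules $(V+W)^\perp=V^\perp\cap W^\perp$ and $(V\cap W)^\perp=V^\perp+W^\perp$. I will also use two elementary consequences of (R1)--(R3) for~$\rho$, namely $\rho(0)=0$ and the unit-increase estimate $\rho(U')\leq\rho(U)+\dim(U'/U)$ for $U\leq U'$ (proved by adding one vector at a time and applying submodularity, using $U\cap\subspace{x}=0$ for $x\notin U$).

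For (R2), if $V\leq W$ then $W^\perp\leq V^\perp$, and the unit-increase estimate gives $\rho(V^\perp)\leq\rho(W^\perp)+\dim(V^\perp/W^\perp)=\rho(W^\perp)+\dim(W/V)$; rearranging the definition of~$\rho^*$ then yields $\rho^*(V)\leq\rho^*(W)$. For (R3), expanding $\rho^*(V+W)+\rho^*(V\cap W)$, using $\dim(V+W)+\dim(V\cap W)=\dim V+\dim W$ and replacing $(V+W)^\perp$ and $(V\cap W)^\perp$ via the De Morgan rules, reduces the claimed inequality to the submodularity of~$\rho$ applied to the pair $V^\perp,W^\perp$. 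The point requiring the most care --- and the only real obstacle --- is the lower bound in (R1), i.e.\ $\dim V+\rho(V^\perp)\geq\rho(E)$: since the form may be isotropic one cannot apply submodularity to~$V$ and~$V^\perp$ directly, as their sum need not be~$E$. Instead I will pick any vector-space complement~$W$ of~$V^\perp$ in~$E$, which has $\dim W=\dim V$, and conclude $\rho(E)=\rho(W+V^\perp)\leq\rho(W)+\rho(V^\perp)\leq\dim V+\rho(V^\perp)$ using submodularity together with $\rho(W\cap V^\perp)=\rho(0)=0$; the upper bound $\rho^*(V)\leq\dim V$ is immediate from $\rho(V^\perp)\leq\rho(E)$ by monotonicity of~$\rho$.

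For the bidual, substituting the defining formula into itself gives $\rho^*(V^\perp)=\dim V^\perp+\rho(V^{\perp\perp})-\rho(E)=\dim V^\perp+\rho(V)-\rho(E)$ and $\rho^*(E)=\dim E+\rho(E^\perp)-\rho(E)=\dim E+\rho(0)-\rho(E)=\dim E-\rho(E)$. Hence $\rho^{**}(V)=\dim V+\rho^*(V^\perp)-\rho^*(E)$ simplifies, using $\dim V^\perp=\dim E-\dim V$, to $\rho(V)$, so $\cM^{**}=\cM$. All of this is routine once the properties of~$\perp$ are in place; the isotropy issue in the lower bound of (R1) is the only spot that is not a mechanical rewriting.
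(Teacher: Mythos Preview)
Your proof is correct. The paper does not actually prove this theorem; it is quoted from the literature (\cite[Def.~41, Thm.~42]{JuPe18} and \cite[Thm.~2.8]{GLJ22Gen}) as background material, so there is no in-paper argument to compare against. Your direct verification of (R1)--(R3) and the bidual identity is the standard approach and is carried out cleanly; in particular, your handling of the lower bound in (R1) via an arbitrary vector-space complement of~$V^\perp$ correctly sidesteps the isotropy issue.
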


\begin{rem}\label{R-FlatsOpen}
For all $V\in\cL(E)$ one has $V\in\cF(\cM)\Longleftrightarrow V^\perp\in\cO(\cM^*)$; see \cite[Cor.~86]{BCJ22}. 
As a consequence, $\cZ(\cM^*)=\{Z^\perp\mid Z\in\cZ(\cM)\}$.
\end{rem}

We now turn to the direct sum of $q$-matroids, which has been introduced in \cite{CeJu24}.
Throughout the paper we use the following notation.

\begin{nota}\label{Nota}
For any direct sum $E=E_1\oplus E_2$ of $\F$-vector spaces~$E_1$ and~$E_2$ we denote by
$\pi_i:E\longrightarrow E_i,\,i=1,2,$  the corresponding projections.
Moreover, for any collections $\cV_i$ of subspaces in~$E_i$ we set 
$\cV_1\oplus\cV_2=\{V_1\oplus V_2\mid V_i\in\cV_i\}$.
\end{nota}

\begin{theo}[\mbox{\cite[Sec.~6]{CeJu24} and \cite[Ch.~5]{GLJ23C}}]\label{T-DirSum}
Let $\cM_i=(E_i,\rho_i),\,i=1,2,$ be \qM{}s and set $E=E_1\oplus E_2$.
Define $\rho'_i:\cL(E)\longrightarrow \N_0,\ V\longmapsto \rho_i(\pi_i(V))$ for $i=1,2$.
Then $\cM'_i=(E,\rho'_i)$ is a \qM{}.
Define
\begin{equation}\label{e-rho}
  \rho:\cL(E)\longrightarrow\N_0,\quad V\longmapsto\dim V+\min_{X\in\cL(V)}\big(\rho'_1(X)+\rho'_2(X)-\dim X\big).
\end{equation}
Then $\cM:=(E,\rho)$ is a $q$-matroid, called the \textbf{direct sum} of $\cM_1$ and~$\cM_2$ and denoted by $\cM_1\oplus\cM_2$.
\end{theo}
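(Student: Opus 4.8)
The statement comprises two assertions -- that each $\cM'_i=(E,\rho'_i)$ is a \qM{}, and that $\cM=(E,\rho)$ is a \qM{} -- and in both cases the plan is a direct verification of axioms (R1)--(R3). For $\cM'_i$ the only inputs needed are that $\pi_i$ is $\F$-linear with image~$E_i$, so that $\dim\pi_i(V)\le\dim V$ (which, together with (R1) for~$\rho_i$, gives (R1) for $\rho'_i$) and $\pi_i(V+W)=\pi_i(V)+\pi_i(W)$, together with the one-sided containment $\pi_i(V\cap W)\subseteq\pi_i(V)\cap\pi_i(W)$. Then (R2) is immediate from monotonicity of~$\rho_i$, and for (R3) one writes $\rho'_i(V+W)+\rho'_i(V\cap W)\le\rho_i(\pi_i(V)+\pi_i(W))+\rho_i(\pi_i(V)\cap\pi_i(W))$ (using monotonicity of~$\rho_i$ on the displayed containment) and then invokes submodularity of~$\rho_i$.

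For $\cM$, I would abbreviate $g(X):=\rho'_1(X)+\rho'_2(X)-\dim X$, so that $\rho(V)=\dim V+\min_{X\in\cL(V)}g(X)$, and again check (R1)--(R3). Property (R1) is quick: since $\rho'_1(X),\rho'_2(X)\ge0$ one has $g(X)\ge-\dim X\ge-\dim V$ whenever $X\le V$, so $\rho(V)\ge0$, while the choice $X=0$ gives $g(0)=0$ and hence $\rho(V)\le\dim V$. For (R2), given $V\le W$, I would pick $Y\in\cL(W)$ attaining the minimum in $\rho(W)$ and test the subspace $V\cap Y\in\cL(V)$ in the minimum defining $\rho(V)$: monotonicity of the $\rho'_i$ gives $\rho'_i(V\cap Y)\le\rho'_i(Y)$, and from $V+Y\le W$ together with the modular law one gets $\dim V-\dim(V\cap Y)\le\dim W-\dim Y$; adding these yields $\rho(V)\le\rho(W)$.

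The heart of the argument is (R3), and the only real decision there is the choice of test subspaces -- I do not expect a genuine obstacle, only that some care is needed to pick the right ones. I would choose $X\in\cL(V)$ and $Y\in\cL(W)$ attaining the minima defining $\rho(V)$ and $\rho(W)$; since $X+Y\in\cL(V+W)$ and $X\cap Y\in\cL(V\cap W)$, testing $X+Y$ in $\rho(V+W)$ and $X\cap Y$ in $\rho(V\cap W)$ produces an upper bound for $\rho(V+W)+\rho(V\cap W)$ whose dimension contribution is $\dim(V+W)+\dim(V\cap W)-\dim(X+Y)-\dim(X\cap Y)$. Applying the modular law to the pair $V,W$ and to the pair $X,Y$ rewrites this as $\dim V+\dim W-\dim X-\dim Y$, and submodularity of each $\rho'_i$ (from the first part) bounds $\rho'_i(X+Y)+\rho'_i(X\cap Y)$ by $\rho'_i(X)+\rho'_i(Y)$; the upper bound then collapses to exactly $\rho(V)+\rho(W)$, so (R3) holds. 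The one conceptual point -- and the reason no obstacle arises -- is that the penalty term $-\dim X$ in $g$ is \emph{modular}, so it contributes equally on both sides of the submodularity estimate; apart from this the argument is pure bookkeeping, and the same computation in fact shows that this min-formula applied to any two \qM{} rank functions on a common ground space yields a \qM.
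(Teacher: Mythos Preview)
Your proof is correct. The paper itself does not prove this theorem; it is quoted from \cite[Sec.~6]{CeJu24} and \cite[Ch.~5]{GLJ23C} without proof. Your direct verification of (R1)--(R3) for both $\cM'_i$ and $\cM$ is essentially the standard argument used in those references, and all the steps go through as you describe: the key observations are $\pi_i(V\cap W)\subseteq\pi_i(V)\cap\pi_i(W)$ for the submodularity of~$\rho'_i$, and the modularity of $-\dim X$ together with the test subspaces $X+Y$ and $X\cap Y$ for the submodularity of~$\rho$.
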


It is easy to see that 
\begin{equation}\label{e-RhoDirect}
  \rho(V_1\oplus V_2)=\rho_1(V_1)+\rho_2(V_2)\ \text{ for all $V_i\in\cL(E_i),\,i=1,2$}.
\end{equation}
In particular $\rho(\cM)=\rho_1(\cM_1)+\rho_2(\cM_2)$.

\begin{rem}\label{R-MiPrime}
The auxiliary \qM{}s $\cM_i'$ appearing in \cref{T-DirSum} 
are in fact special cases of the direct sum. 
Indeed, $\cM_1'\approx\cM_1\oplus\cU_{0,n_2}$ and $\cM_2'\approx\cU_{0,n_1}\oplus\cM_2$.
\end{rem}

The direct sum of \qM{}s behaves very different from that for matroids. 
Some discrepancies are summarized in the following result.
We will see more differences later in this paper.
Recall \cref{Nota}.

\begin{theo}\label{T-DSProperties}
Let $\cM_i=(E_i,\rho_i),\,i=1,2,$ and $\cM=\cM_1\oplus\cM_2$.
\begin{alphalist}
\item Similar to \cref{D-FlatsOpenCyclic}, for a \qM{}~$\cN$ let $\cI(\cN)$ and $\cC(\cN)$ denote the collection of independent spaces and circuits, respectively. 
    Then $\cI(\cM_1)\oplus\cI(\cM_2)\subset\cI(\cM),\  \cF(\cM_1)\oplus\cF(\cM_2)\subset\cF(\cM),\  \cO(\cM_1)\oplus\cO(\cM_2)\subset\cO(\cM)$,
    and $\cC(\cM_1)\cup\cC(\cM_2)\subset\cC(\cM)$. In general equality does not hold in any of these cases.
    see \cite[Prop.~6.1]{GLJ24DSCyc}. This stands in contrasts to the matroid case where one has equality for all collections.
\item $\cM$ may be non-representable over any field extension of~$\F$ even if $\cM_1$ and $\cM_2$ are $\F_{q^m}$-representable;
     see \cite{GLJ22Rep} and \cite{AJNZ24} for further details.
     Again, this differs strikingly from the matroid case.
\end{alphalist}
\end{theo}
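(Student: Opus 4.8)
The plan is to handle the two parts separately: part (a) by verifying each of the four inclusions directly from the rank formula \eqref{e-rho} of the direct sum, and the failure of equality together with part (b) by pointing to explicit examples in the literature.

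For part (a), I would first record that \eqref{e-RhoDirect} with one summand set to zero yields $\rho(V)=\rho_i(V)$ for every $V\in\cL(E_i)$, i.e., $\cM|E_i=\cM_i$. From this the circuit inclusion $\cC(\cM_1)\cup\cC(\cM_2)\subseteq\cC(\cM)$ is immediate: a circuit of $\cM_i$, viewed inside $E\supseteq E_i$, is still dependent with all proper subspaces independent. The independence inclusion is equally immediate from \eqref{e-RhoDirect}, since $\rho_i(V_i)=\dim V_i$ for $i=1,2$ gives $\rho(V_1\oplus V_2)=\dim(V_1\oplus V_2)$. For flats $F_i\in\cF(\cM_i)$ I would take $x=x_1+x_2\in E\setminus(F_1\oplus F_2)$, so that $x_j\notin F_j$ for some $j$, say $j=1$, and then show $\rho\big((F_1\oplus F_2)+\subspace{x}\big)>\rho_1(F_1)+\rho_2(F_2)$ by feeding into \eqref{e-rho} the observation that $\pi_1$ of the enlarged space equals $F_1+\subspace{x_1}\supsetneq F_1$ while $F_1$ is a flat of $\cM_1$; the minimum in \eqref{e-rho} is then controlled summand by summand because $\rho_i'$ depends only on $\pi_i$. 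For the cyclic inclusion I would use the cryptomorphic description of open spaces as sums of circuits (see \cite{BCJ22}): then $O_1\oplus O_2=O_1+O_2$ is a sum of circuits of $\cM$ by the first step, hence open. (Alternatively, one can dualize via \cref{R-FlatsOpen}, since direct sum commutes with duality, reducing the cyclic inclusion to the flat one.)

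For the statement that equality generally fails, it is enough to display a single example; I would reuse the one in \cite[Prop.~6.1]{GLJ24DSCyc}, where already for small uniform summands $\cM_1\oplus\cM_2$ acquires independent spaces, flats, open spaces, and circuits not of product form. The conceptual reason — worth a sentence in the write-up — is that a direct sum of \qM{}s typically has strictly more cyclic flats than the products $Z_1\oplus Z_2$, unlike matroids, where the four collections split as products. For part (b) there is genuinely nothing to prove from scratch: the assertion is merely the existence of matrices $G_1,G_2$ over some $\F_{q^m}$ with $\cM_{G_1},\cM_{G_2}$ representable but $\cM_{G_1}\oplus\cM_{G_2}$ not representable over any extension of $\F_q$, and such matrices are constructed explicitly in \cite{GLJ22Rep} and \cite{AJNZ24}; I would simply cite those constructions.

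I expect the flat inclusion (equivalently, via duality, the cyclic inclusion) to be the only real obstacle, since it is the single place where one must actually work with the minimization in \eqref{e-rho} rather than read values off \eqref{e-RhoDirect}; everything else is either an immediate consequence of \eqref{e-RhoDirect}, a matter of exhibiting one counterexample, or a citation.
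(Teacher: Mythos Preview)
Your approach is sound and in fact goes further than the paper itself: the paper records this theorem purely as a summary of known facts and defers entirely to \cite[Prop.~6.1]{GLJ24DSCyc}, \cite{GLJ22Rep}, and \cite{AJNZ24} without giving any argument. Your direct verification of the four inclusions in~(a) is correct in outline. The flat case does work via \eqref{e-rho} as you suggest: for $X\leq V:=(F_1\oplus F_2)+\subspace{x}$ one uses that the nullity $\dim(\,\cdot\,)-\rho_i(\,\cdot\,)$ is monotone on $\cL(E_i)$, that adding a vector outside a flat leaves $\rho_i-\dim$ unchanged, and that $\dim X\leq\dim\pi_1(X)+\dim\pi_2(X)$; combining these gives $\rho(V)\geq\rho_1(F_1)+\rho_2(F_2)+1$. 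This deserves a line or two more than ``controlled summand by summand,'' but the idea is right.

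One factual error to fix: your proposed ``conceptual reason'' for the failure of equality---that the direct sum typically acquires cyclic flats beyond those of the form $Z_1\oplus Z_2$---is precisely backwards. As recorded in \cref{T-DSCycFlats} of this very paper, one has $\cZ(\cM)=\cZ(\cM_1)\oplus\cZ(\cM_2)$ exactly; cyclic flats are the one collection that \emph{does} split as a product. The strictness of the inclusions for independents, flats, open spaces, and circuits comes instead from diagonal subspaces of $E_1\oplus E_2$ that are not themselves direct sums of subspaces of the~$E_i$, and this is what the examples in \cite[Prop.~6.1]{GLJ24DSCyc} exhibit. You should delete or correct that sentence.
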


On the positive side, the cyclic flats behave well with the direct sum.

\begin{theo}[\mbox{\cite[Thms.~5.6 and 6.3]{GLJ24DSCyc}}]\label{T-DSCycFlats}
Let $\cM_i=(E_i,\rho_i),\,i=1,2,$ and $\cM=\cM_1\oplus\cM_2$ with rank function~$\rho$.
Then $\cZ(\cM_1)\oplus\cZ(\cM_2)=\cZ(\cM)$. As a consequence (see \eqref{e-RhoqM}),
\[
 \rho(V)=\dim V+\min_{Z_i\in\cZ(\cM_i)}\big(\rho_1(Z_1)+\rho_2(Z_2)-\dim((Z_1\oplus Z_2)\cap V)\big)
\]
for all $V\in\cL(E_1\oplus E_2)$.
\end{theo}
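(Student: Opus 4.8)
The plan is to prove the set identity $\cZ(\cM)=\cZ(\cM_1)\oplus\cZ(\cM_2)$; the displayed rank formula then follows by substituting it into \eqref{e-RhoqM}, since for $Z=Z_1\oplus Z_2$ one has $\rho(Z)=\rho_1(Z_1)+\rho_2(Z_2)$ by \eqref{e-RhoDirect} and $\dim((V+Z)/Z)=\dim V-\dim(V\cap Z)$, so that last step is purely mechanical and I would dispatch it in one line.

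For the inclusion ``$\supseteq$'' I would simply quote \cref{T-DSProperties}(a): if $Z_i\in\cZ(\cM_i)=\cF(\cM_i)\cap\cO(\cM_i)$, then $Z_1\oplus Z_2$ lies in $\cF(\cM_1)\oplus\cF(\cM_2)\subseteq\cF(\cM)$ and in $\cO(\cM_1)\oplus\cO(\cM_2)\subseteq\cO(\cM)$, hence in $\cZ(\cM)$.

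The core of the proof is ``$\subseteq$'', and here I would isolate one lemma: \emph{if $Z$ is cyclic in $\cM=\cM_1\oplus\cM_2$, then $\rho(Z)=\rho_1(\pi_1(Z))+\rho_2(\pi_2(Z))$.} To prove it, put $g(X)=\rho_1(\pi_1(X))+\rho_2(\pi_2(X))-\dim X$, so that \eqref{e-rho} reads $\rho(W)=\dim W+\min_{X\in\cL(W)}g(X)$ for every $W\le E$. If the minimum of $g$ over $\cL(Z)$ were not attained at $Z$ itself, pick a proper subspace $X_0$ of $Z$ with $g(X_0)=\min_{X\le Z}g(X)<g(Z)$ and a hyperplane $H$ of $Z$ containing $X_0$; then $\rho(H)\le\dim H+g(X_0)=(\dim Z-1)+\min_{X\le Z}g(X)=\rho(Z)-1$, contradicting that $Z$ is cyclic. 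Hence $g(Z)$ is this minimum, which is exactly the claimed identity. Granting the lemma, let $Z\in\cZ(\cM)$ and set $Z_i=\pi_i(Z)$. From $Z\le Z_1\oplus Z_2$, the lemma, and \eqref{e-RhoDirect} we get $\rho(Z)=\rho_1(Z_1)+\rho_2(Z_2)=\rho(Z_1\oplus Z_2)$; since $Z$ is a flat, hence inclusion-maximal of its rank, this forces $Z=Z_1\oplus Z_2$. It remains to see $Z_i\in\cZ(\cM_i)$, which I would do by transporting any hypothetical failure back to $Z$: if $x_1\in E_1\setminus Z_1$ satisfied $\rho_1(Z_1+\langle x_1\rangle)=\rho_1(Z_1)$, then $\rho(Z+\langle x_1\rangle)=\rho_1(Z_1+\langle x_1\rangle)+\rho_2(Z_2)=\rho_1(Z_1)+\rho_2(Z_2)=\rho(Z)$ although $x_1\notin Z$, contradicting that $Z$ is a flat; and a hyperplane $H_1$ of $Z_1$ with $\rho_1(H_1)<\rho_1(Z_1)$ would make $H_1\oplus Z_2$ a hyperplane of $Z$ of rank $\rho_1(H_1)+\rho_2(Z_2)<\rho(Z)$, contradicting that $Z$ is cyclic. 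The analogous assertions for $Z_2$ are symmetric, so $Z_i\in\cZ(\cM_i)$ and $Z\in\cZ(\cM_1)\oplus\cZ(\cM_2)$.

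I expect the real obstacle to be the lemma, or more precisely the observation that being cyclic forces the minimum in \eqref{e-rho} to be attained at $X=Z$; once that is in place, everything afterward is bookkeeping with \eqref{e-RhoDirect} and the definitions of flat and cyclic space. A minor point worth flagging is that throughout it suffices to produce a single ``bad'' hyperplane to violate the cyclic property, so no classification of the hyperplanes of a subspace of $E_1\oplus E_2$ is needed.
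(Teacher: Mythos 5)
The paper does not prove this theorem; it cites it verbatim from \cite[Thms.~5.6 and 6.3]{GLJ24DSCyc}, so there is no in-text argument to compare against. Your proof is nonetheless correct and self-contained modulo the facts it leans on (\eqref{e-rho}, \eqref{e-RhoqM}, \eqref{e-RhoDirect}, and \cref{T-DSProperties}(a)). The heart of it, the lemma that a cyclic $Z$ in $\cM_1\oplus\cM_2$ must already attain the minimum in \eqref{e-rho} at $X=Z$, is a clean observation: if some proper $X_0\lneq Z$ had $g(X_0)<g(Z)$, any hyperplane of $Z$ through $X_0$ would drop rank and violate cyclicity. This gives $\rho(Z)=\rho_1(\pi_1(Z))+\rho_2(\pi_2(Z))=\rho(\pi_1(Z)\oplus\pi_2(Z))$, and maximality of the flat $Z$ then forces $Z=\pi_1(Z)\oplus\pi_2(Z)$; the transport of a bad point (violating flatness of $Z_i$) or bad hyperplane (violating cyclicity of $Z_i$) back to $Z$ finishes the inclusion. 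The one phrase worth tightening is ``if the minimum were not attained at $Z$ itself'': what you actually use, and immediately write, is the stronger hypothesis $\min_{X\le Z}g(X)<g(Z)$, which is what the contradiction disproves -- the minimum may well be attained simultaneously at $Z$ and at a proper subspace without any harm. The derivation of the displayed rank formula from \eqref{e-RhoqM} is indeed mechanical once the set identity is in hand.
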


\section{The Whitney Function of $q$-Matroids}\label{S-Whitney}

Throughout this section, let $\F=\F_q$ and $E$ be an $n$-dimensional $\F$-vector space. 
Fix a \qM{} $\cM=(E,\rho)$ and let $\hat{\rho}=\rho(E)$ be its rank.

The Whitney function of~$\cM$ (also known as rank-generating function or corank-nullity function) is defined in the usual way; see also  
\cite[Def.~64]{ByFu25}.

\begin{defi}\label{D-Polys}
The \textbf{Whitney function} or \textbf{corank-nullity function} of~$\cM$ is 
\[
   R_\cM=\sum_{V\leq E}x^{\hat{\rho}-\rho(V)}y^{\dim V -\rho(V)}\in\Z[x,y].
\]
The exponent $\hat{\rho}-\rho(V)$ is called the \textbf{corank} of~$V$ and $\dim V -\rho(V)$ is the \textbf{nullity}.
\end{defi}

Clearly, the Whitney function is invariant under equivalence of \qM{}s.

The coefficients of the Whitney function can easily be described.
A monomial $x^iy^j$ appears in~$R_\cM$ if and only if there exists a subspace $V\in\cL(E)$ such that $\rho(V)=\hat{\rho}-i$ and 
$\dim V=\hat{\rho}-i+j$.
Furthermore, in that case $j\leq n-\hat{\rho}$. Indeed, 
choosing a subspace~$W$ such that $V\oplus W=E$, we obtain from submodularity
$\hat{\rho}=\rho(E)\leq\rho(V)+\rho(W)\leq\rho(V)+\dim W=\hat{\rho}-i+n-\dim V=n-j$ and thus $j\leq n-\hat{\rho}$.
All of this shows that the Whitney function is of the form
\begin{equation}\label{e-WhitExpand}
   R_{\cM}=\sum_{i=0}^{\hat{\rho}} \sum_{j=0}^{n-\hat{\rho}} \nu_{i,j}x^{i}y^{j}
\end{equation}
where 
\begin{equation}\label{e-WhitCoeff}
   \nu_{i,j}= \big|\{V\leq E\mid \rho(V)=\hat{\rho}-i,\,\dim V=\hat{\rho}-i+j\}\big|.      
\end{equation}

\begin{rem}\label{R-WhitneyCounting}
The Whitney function determines various parameters of~$\cM$. First of all, we have
\[
   \hat{\rho}=\deg_x R_{\cM},\quad n-\hat{\rho}=\deg_y R_{\cM},\quad 
     |\cL(E)|=R_\cM(1,1),
\]
and $x^{\hat{\rho}}$ and $y^{n-\hat{\rho}}$ are monomials in~$R_\cM$.
Furthermore, $\sum_{i=0}^k\nu_{\hat{\rho}-i,k-i}=\Gaussian{n}{k}$ for every $k\in[n]$ and 
\begin{align*}
   R_\cM(0,0)&=\text{number of bases of }\cM,\\
   R_\cM(1,0)&=\text{number of independent spaces of }\cM,\\
   R_\cM(0,1)&=\text{number of spanning spaces of }\cM
\end{align*}
(see \cite{BCJ22} for bases and spanning spaces).
Finally, if $R_\cM=\sum_{i=0}^{\hat{\rho}} f_ix^i=\sum_{j=0}^{n-\hat{\rho}}g_j y^j$ where $f_i\in\Z[y]$ and $g_j\in\Z[x]$, then 
\[
     0\in\cF(\cM)\Longleftrightarrow f_{\hat{\rho}}=1\ \text{ and }\ E\in\cO(\cM)\Longleftrightarrow g_{n-\hat{\rho}}=1.
\]
Indeed, $0\not\in\cF(\cM)$ is equivalent to the existence of a space $V\in\cL(E)\setminus\{0\}\text{ such that } \rho(V)=0$, and this in turn 
is equivalent to $x^{\hat{\rho}-\rho(V)}y^{\dim V-\rho(V)}=x^{\hat{\rho}}y^\ell$ being a monomial in~$R_\cM$ for some $\ell>0$.
 Likewise, $E\not\in\cO(\cM)$ is equivalent to the existence of some $W\in\Hyp(E)$ such that $\rho(W)=\hat{\rho}-1$, and this is equivalent to
 $xy^{n-\hat{\rho}}$ being a monomial in~$R_\cM$.
 Later in \cref{T-Extremal} we will see how to extract partial information about the cyclic flats of~$\cM$ from the Whitney function.
\end{rem}

The following is easy to see.

\begin{exa}\label{E-WhitneyUnif}
Let $k\in[n]_0$ and $\cM=\cU_{k,n}$. Then $R_\cM=\sum_{j=0}^k\Gaussian{n}{j}x^{k-j}+\sum_{j=k+1}^n\Gaussian{n}{j}y^{j-k}$.
\end{exa}

Just like for matroids, there exist non-equivalent \qM{}s with the same Whitney function.
The following example is not hard to find. We will see more interesting cases later in 
Examples~\ref{E-CloudFlock3},~\ref{E-CConfig}, and \ref{E-DiffCycFlats}.

\begin{exa}\label{E-NonIsoWhitney}
Let $\F=\F_2$. In $E=\F^6$ define
\[
   V_1=\subspace{e_1,e_2,e_3},\ V_2=\subspace{e_4,e_5,e_6},\ V_3=\subspace{e_3,e_4,e_5}
\]
and set $\cV=\{V_1,\,V_2\}$ and $\cW=\{V_1,\,V_3\}$.
Define $\rho_i:\cL(E)\longrightarrow\N_0,\,i=1,2,$ via
\[
  \rho_1(V)=\left\{\begin{array}{cl}2,&\text{if }V\in\cV,\\ \min\{\dim V,3\},&\text{otherwise,}\end{array}\right\}\qquad
  \rho_2(V)=\left\{\begin{array}{cl}2,&\text{if }V\in\cW,\\ \min\{\dim V,3\},&\text{otherwise.}\end{array}\right\}
\]
Then~$\rho_1$ and~$\rho_2$ are rank functions (see \cite[Prop.~4.6]{GLJ22Gen}) and thus we have \qM{}s $\cM_i=(E,\rho_i)$. 
It turns out that 
\[
   R_{\cM_1}=R_{\cM_2}=x^3 + y^3 + 63x^2 + 2xy + 63y^2 + 651x + 651y + 1393.
\]
The two \qM{}s are not equivalent.
Indeed, the lattices of cyclic flats are given by $\cZ(\cM_1)=\cV\cup\{0,\F^6\}$ and $\cZ(\cM_2)=\cW\cup\{0,\F^6\}$,
and $V_1\cap V_2=0$, whereas $V_1\cap V_3\neq0$.
Note that the abstract lattices of cyclic flats are isomorphic, and the isomorphism preserves the dimension and 
rank values of the cyclic flats.
Later in \cref{S-Config} we will describe this via the configurations of  $\cM_1$ and $\cM_2$.
\end{exa}

Recall the dual \qM{} $\cM^*$ from \cref{T-DualqM}. 
Obviously, the rank functions~$\rho$ and~$\rho^*$ satisfy
\begin{equation}\label{e-coranknullityduality}
    \dim V^\perp-\rho^*(V^\perp)=\rho(E)-\rho(V)\ \text{ for all }V\in\cL(E),
\end{equation}
that is, the nullity of~$V^\perp$ in~$\cM^*$ equals the corank of~$V$ in~$\cM$.
This leads immediately to the following duality of the Whitney function; see also \cite[Prop.~92]{ByFu25}.

\begin{theo}\label{T-WhitDual}
$R_{\cM^*}(x,y)=R_{\cM}(y,x)$.
\end{theo}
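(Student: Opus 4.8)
The plan is to reindex the defining sum for $R_{\cM^*}$ along the bijection $W\mapsto W^\perp$ of $\cL(E)$ and to check that under this substitution the corank and nullity exponents interchange. First note that $\rho^*(E)=\dim E+\rho(0)-\rho(E)=n-\hat{\rho}$, so the rank of~$\cM^*$ is $n-\hat{\rho}$. Since $V\mapsto V^\perp$ is a bijection on $\cL(E)$ with $\dim V^\perp=n-\dim V$, putting $W=V^\perp$ gives
\[
   R_{\cM^*}=\sum_{W\leq E}x^{\rho^*(E)-\rho^*(W)}y^{\dim W-\rho^*(W)}
            =\sum_{V\leq E}x^{(n-\hat{\rho})-\rho^*(V^\perp)}\,y^{\,(n-\dim V)-\rho^*(V^\perp)}.
\]

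Next I would simplify the two exponents using the formula from \cref{T-DualqM} applied to $U=V^\perp$, namely $\rho^*(V^\perp)=\dim V^\perp+\rho((V^\perp)^\perp)-\rho(E)=(n-\dim V)+\rho(V)-\hat{\rho}$, where we use $(V^\perp)^\perp=V$. The $y$-exponent then becomes $(n-\dim V)-\rho^*(V^\perp)=\hat{\rho}-\rho(V)$, which is precisely the corank of~$V$ in~$\cM$; this is exactly the content of \eqref{e-coranknullityduality}. The $x$-exponent becomes $(n-\hat{\rho})-\big((n-\dim V)+\rho(V)-\hat{\rho}\big)=\dim V-\rho(V)$, the nullity of~$V$ in~$\cM$. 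Substituting back yields
\[
   R_{\cM^*}=\sum_{V\leq E}x^{\dim V-\rho(V)}\,y^{\hat{\rho}-\rho(V)}=R_{\cM}(y,x),
\]
as claimed.

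I do not expect any genuine obstacle here: the only ingredients are the explicit description of~$\rho^*$ in \cref{T-DualqM}, the fact that the orthogonal complement is an involutive, inclusion-reversing bijection of~$\cL(E)$ satisfying $\dim V+\dim V^\perp=n$, and careful bookkeeping of exponents. The one point worth stating explicitly is the identity $\rho^*(E)=n-\hat{\rho}$ (equivalently, that $\cM^*$ has rank $n-\hat{\rho}$), which is immediate from $\rho(0)=0$; everything else is a routine substitution.
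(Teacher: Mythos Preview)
Your proof is correct and follows exactly the approach the paper indicates: it uses the bijection $V\mapsto V^\perp$ on $\cL(E)$ together with the dual rank formula from \cref{T-DualqM} (encapsulated in \eqref{e-coranknullityduality}) to see that corank and nullity swap, which is precisely what the paper means by ``this leads immediately to'' the stated identity. You have simply spelled out the bookkeeping that the paper leaves implicit.
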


We close this section with a short remark on the characteristic function.

\begin{rem}\label{R-CharPoly}
The characteristic polynomial of~$\cM$ is defined as 
$\chi_\cM=\sum_{V\leq E}\mu(0,V)x^{\hat{\rho}-\rho(V)}\in\Z[x]$, where $\mu$ is the M{\"o}bius function on the subspace lattice $\cL(E)$ (see also
\cite[Def.~23]{BCIJ24}). 
Thus, $\chi_{\cM}=\sum_{j=0}^n\sum_{\dim V=j}(-1)^jq^{\BinomS{j}{2}}x^{\hat{\rho}-\rho(V)}$.
With the notation from \eqref{e-WhitExpand} and \eqref{e-WhitCoeff} we have 
\[
    \chi_{\cM}=\sum_{i=0}^{\hat{\rho}}\sum_{j=i}^{n}(-1)^j\nu_{\hat{\rho}-i,j-i}q^{\BinomS{j}{2}}x^{\hat{\rho}-i}
    =\sum_{i=0}^{\hat{\rho}}\sum_{j=0}^{n-\hat{\rho}}\nu_{i,j}(-1)^{\hat{\rho}+j-i}q^{\BinomS{\hat{\rho}+j-i}{2}}x^i.
 \]
In other words, $\chi_\cM$ is obtained from~$R_\cM$ by substituting $(-1)^{\hat{\rho}+j-i}q^{\BinomS{\hat{\rho}+j-i}{2}}x^i$ for $x^iy^j$.
This can be rephrased as substituting  $q^{\BinomS{\hat{\rho}+j-i}{2}}(-x)^i(-1)^j$ for $x^iy^j$ in the polynomial $(-1)^{\hat{\rho}}R_\cM$.
The latter may be regarded as the $q$-analogue of the well-known matroid identity $\chi_M(x)=(-1)^{\hat{r}}R_M(-x,-1)$, where $\hat{r}$ is the rank of the matroid~$M$.
\end{rem}

\section{The Cloud and Flock Polynomial}\label{S-CloudFlock}

Following \cite{PlBae14,Ebe14} on clouds and flocks for matroids, we introduce the cloud and flock polynomials of a \qM.
The cloud polynomial of a cyclic flat~$Z$ captures information about the corank of the flats whose cyclic core is~$Z$, while the flock polynomial of~$Z$ contains information about the nullity of the spaces whose closure is~$Z$. 
The main result of this section shows that the collection of all cloud and flock polynomials of~$\cM$ fully determine the Whitney function~$R_\cM$.

Throughout, let $\cM=(E,\rho)$ be a \qM{} with $\rho(\cM)=\hat{\rho}$. Recall the notation from \cref{D-FlatsOpenCyclic}.
Closely related to the notion of flats and cyclic spaces are the closure and cyclic core.
For $V\in\cL(E)$ we define the \textbf{closure} of~$V$ as
\[
   \cl(V)=\sum_{   \genfrac{}{}{0pt}{1}{x\in E}{\rho(V+\langle x\rangle)=\rho(V)}     }\hspace*{-1.7em}\subspace{x},
\]
and the \textbf{cyclic core} as
\[
   \cyc(V)=\{x\in V\mid \rho(W)=\rho(V)\text{ for all $W\leq V$ such that $W+\subspace{x}=V$}\}.
\]
Clearly, $\cl(V)\in\cF(\cM)$ and $\cyc(V)\in\cO(\cM)$ for all $V\in\cL(E)$; see also \cite[Thm.~3.6]{GLJ24DSCyc}.
Properties of the closure can be found in \cite[Def.~15]{BCJ22} (see also the summary in \cite[Thm.~2.7]{GLJ24DSCyc}), while the cyclic core has been studied in detail in \cite{AlBy24} and \cite{GLJ24DSCyc}.
We will need the following results, which can be found in 
\cite[Prop.~3.10(a), Cor.~3.7, Lem.~4.1]{GLJ24DSCyc} or \cite[Lemmas~3.17, 3.23, 3.22]{AlBy24}.

\begin{rem}\label{R-cyccl}
Let $V\in\cL(E)$. Then 
\begin{alphalist}
\item $\dim V-\rho(V)=\dim\cyc(V)-\rho(\cyc(V))$;
\item $\cl^*(V^\perp)=\cyc(V)^\perp$, where $\cl^*$ is the closure in~$\cM^*$;
\item $F\in\cF(\cM)\Longrightarrow \cyc(F)\in\cZ(\cM)$.
\end{alphalist}
\end{rem}

Thus we obtain two well-defined operators
\begin{equation}\label{e-cyccl}
\left.\begin{array}{ccccrcc}
  \cl:&\;\cL(E)&\longrightarrow&\cF(\cM),\ &\quad V&\longmapsto& \cl(V),\\[.5ex]
  \cyc:&\;\cF(\cM)&\longrightarrow&\cZ(\cM),\ &\quad F&\longmapsto& \cyc(F).
\end{array}\qquad\right\}
\end{equation}

The following notions are the $q$-analogues of the cloud and flock for matroids as studied in \cite{PlBae14} and \cite{Ebe14}.
Note that $\cyc^{-1}$ and $\cl^{-1}$ below refer to the operators in~\eqref{e-cyccl}, hence the spaces in $\cyc^{-1}(Z)$ are flats by definition.

\begin{defi}\label{D-CloudFlock}
Let $Z\in\cZ(\cM)$.
The \textbf{cloud} of~$Z$ and \textbf{flock} of~$Z$ are, respectively, 
\[
  \cyc^{-1}(Z)=\{F\in\cF(\cM)\mid \cyc(F)=Z\}\ \text{ and }\
  \cl^{-1}(Z)=\{V\in\cL(E)\mid \cl(V)=Z\},
\]
Furthermore,  the \textbf{cloud polynomial} and \textbf{flock polynomial} of~$Z$ are,  respectively,
\[
  c_{\cM,Z}=\sum_{F\in\cyc^{-1}(Z)}x^{\hat{\rho}-\rho(F)}\in\Z[x]\ \text{ and }\ 
  f_{\cM,Z}=\sum_{V\in\cl^{-1}(Z)}y^{\dim V-\rho(V)}\in\Z[y].
\]
\end{defi}

Note the asymmetry in the above definition.
Whereas the cloud consists of the \emph{flats} whose cyclic core is~$Z$, the flock consists of \emph{all} spaces whose closure is~$Z$. 
We will discuss this asymmetry later in \cref{R-CloudFlockDual}.

\begin{rem}\label{R-DegCloudFlock}
It is easy to see that $\deg c_{\cM,Z}=\hat{\rho}-\rho(Z)$, which is the corank of~$Z$, and $\deg f_{\cM,Z}=\dim Z-\rho(Z)$, which is the nullity of~$Z$ (for the monotonicity of the nullity function see \cite[p.~99]{AlBy24}).
Moreover, if~$\cM$ is full (see \cref{D-FlatsOpenCyclic}), then $\cyc^{-1}(E)=\{E\}$ and $\cl^{-1}(0)=\{0\}$ and thus $c_{\cM,E}=1=f_{\cM,0}$.
\end{rem}

The following simple descriptions of the cloud and flock will be very useful.

\begin{prop}\label{P-CloudFlockProp}
Let $Z\in\cZ(\cM)$. Then 
\begin{align*}
    \cl^{-1}(Z)&=\{V\in\cL(E)\mid V\leq Z\text{ and }\rho(V)=\rho(Z)\},\\
    \cyc^{-1}(Z)&=\{F\in\cF(\cM)\mid Z\leq F\text{ and } \dim F-\rho(F)=\dim Z-\rho(Z)\}.
\end{align*}
\end{prop}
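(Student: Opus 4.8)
The plan is to prove the two set equalities separately, each by showing both inclusions, using the characterizations of $\cl$ and $\cyc$ together with the monotonicity properties of rank and nullity.

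\textbf{The flock identity.} For the description of $\cl^{-1}(Z)$, first suppose $V\in\cL(E)$ with $V\leq Z$ and $\rho(V)=\rho(Z)$. Since $Z$ is a flat, it is inclusion-maximal among spaces of its rank, so every $x\notin Z$ satisfies $\rho(V+\subspace{x})\geq\rho(Z+\subspace{x})>\rho(Z)=\rho(V)$ by monotonicity (here I use $V+\subspace x\le Z+\subspace x$), hence no such $x$ contributes to $\cl(V)$; on the other hand every $x\in Z$ satisfies $\rho(V+\subspace x)=\rho(Z)=\rho(V)$ and thus contributes, giving $\cl(V)=Z$. Conversely, if $\cl(V)=Z$ then by the definition of closure $\rho(V+\subspace x)=\rho(V)$ for every $x\in Z$, so repeatedly adjoining a spanning set of $Z/V$ and invoking monotonicity of $\rho$ (each step does not increase rank, and by (R1)/(R2) it stays constant) yields $\rho(Z)=\rho(V)$; and $V\leq\cl(V)=Z$. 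I expect this direction to be the cleanest.

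\textbf{The cloud identity.} For $\cyc^{-1}(Z)$, let $F$ be a flat. If $\cyc(F)=Z$, then $Z\leq F$ by definition of the cyclic core, and by \cref{R-cyccl}(a) applied to $V=F$ we get $\dim F-\rho(F)=\dim\cyc(F)-\rho(\cyc(F))=\dim Z-\rho(Z)$. For the converse, assume $F$ is a flat with $Z\leq F$ and $\dim F-\rho(F)=\dim Z-\rho(Z)$. By \cref{R-cyccl}(c), $\cyc(F)\in\cZ(\cM)$, and $\cyc(F)\leq F$ with, again by \cref{R-cyccl}(a), the same nullity $\dim F-\rho(F)$. So both $Z$ and $\cyc(F)$ are cyclic flats contained in $F$ with nullity equal to the nullity of $F$. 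The task is then to show $Z=\cyc(F)$. I would argue that $Z\le\cyc(F)$: since $Z$ is cyclic, every hyperplane $W$ of $Z$ has $\rho(W)=\rho(Z)$, and one checks from the definition of $\cyc(F)$ (which asks exactly that removing any single generator does not drop the rank) that this forces $Z\subseteq\cyc(F)$ — more carefully, for $x\in Z$ and any $W\le F$ with $W+\subspace x=F$, submodularity applied to $W$ and $Z$ together with $\rho(Z\cap W)=\rho(Z)$ (as $Z\cap W$ is $Z$ or a hyperplane of $Z$) gives $\rho(W)=\rho(F)$, so $x\in\cyc(F)$. Then $Z\le\cyc(F)\le F$ with all three having the same nullity $\dim Z-\rho(Z)$; combined with $\dim Z-\rho(Z)=\dim\cyc(F)-\rho(\cyc(F))$ and monotonicity of nullity on the chain $Z\le\cyc(F)$, equality of nullities forces $Z=\cyc(F)$ (a proper inclusion of spaces with the same nullity would need equal rank and dimension, impossible).

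\textbf{Anticipated obstacle.} The routine inclusions are straightforward; the one step needing genuine care is showing $Z\subseteq\cyc(F)$ in the cloud identity, i.e. translating "$Z$ is a cyclic flat below $F$ with the right nullity" into the pointwise condition defining $\cyc(F)$. The clean way is the submodularity computation above: for $x\in Z$ and $W\le F$ with $W+\subspace x=F$, we have $W\cap Z\in\{Z\}\cup\Hyp(Z)$, hence $\rho(W\cap Z)=\rho(Z)$ because $Z$ is open, and then (R3) gives $\rho(F)+\rho(Z)=\rho(W+Z)+\rho(W\cap Z)\le\rho(W)+\rho(Z)$, so $\rho(W)\ge\rho(F)$ and thus $\rho(W)=\rho(F)$ by monotonicity; since this holds for all such $W$, we get $x\in\cyc(F)$. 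Once this is in hand, both equalities follow, and I would also remark that the flock description makes the degree claim $\deg f_{\cM,Z}=\dim Z-\rho(Z)$ from \cref{R-DegCloudFlock} transparent, since $Z$ itself lies in $\cl^{-1}(Z)$ and has maximal dimension there.
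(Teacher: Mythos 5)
Your approach is genuinely different from the paper's. The paper asserts the flock identity as immediate and then obtains the cloud identity by dualizing: $\cyc(F)=Z\Leftrightarrow\cl^*(F^\perp)=Z^\perp$ via \cref{R-cyccl}(b), after which the flock identity in $\cM^*$ finishes the job. You instead prove both identities directly, with the cloud identity handled through your submodularity computation establishing $Z\leq\cyc(F)$ (modular law gives $W\cap Z\in\{Z\}\cup\Hyp(Z)$, then openness of $Z$ and (R3) force $\rho(W)=\rho(F)$). That computation is correct and is a nice self-contained alternative to the duality route. However, two of your justifications are faulty as written.

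In the backward direction of the flock identity you claim $\rho(V+\subspace x)\geq\rho(Z+\subspace x)$ for $x\notin Z$ ``by monotonicity'' from $V+\subspace x\leq Z+\subspace x$. Monotonicity applied to that containment gives $\rho(V+\subspace x)\leq\rho(Z+\subspace x)$ — the opposite direction. The inequality you want is true, but it needs submodularity: apply (R3) to $Z$ and $V+\subspace x$, using $Z+(V+\subspace x)=Z+\subspace x$ and $Z\cap(V+\subspace x)\geq V$ together with $\rho(V)=\rho(Z)$, to get $\rho(V+\subspace x)\geq\rho(Z+\subspace x)>\rho(Z)=\rho(V)$. (Alternatively, note $\cl(V)$ is a flat with $\cl(V)\leq\cl(Z)=Z$ and $\rho(\cl(V))=\rho(V)=\rho(Z)$; a proper inclusion $\cl(V)\lneq Z$ would contradict $\cl(V)$ being inclusion-maximal among spaces of its rank.)

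The final step of the cloud identity is also garbled. Having shown $Z\leq\cyc(F)$ with equal nullities, you conclude $Z=\cyc(F)$ via the parenthetical ``a proper inclusion of spaces with the same nullity would need equal rank and dimension,'' which is not correct: a proper inclusion with equal nullity means dimension and rank both increase by the same positive amount, and nothing about that is intrinsically contradictory for arbitrary spaces. What you actually need is the cyclicity of $\cyc(F)$: if $Z\lneq\cyc(F)$, choose $H\in\Hyp(\cyc(F))$ with $Z\leq H$; openness of $\cyc(F)$ gives $\rho(H)=\rho(\cyc(F))$, so $\dim H-\rho(H)=\dim\cyc(F)-1-\rho(\cyc(F))<\dim Z-\rho(Z)$, which contradicts the monotonicity of the nullity function (cited in \cref{R-DegCloudFlock}) applied to $Z\leq H$. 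This is exactly the strict monotonicity of nullity along chains of cyclic flats — the content of axiom (Z2) in the cyclic-flat cryptomorphism — and it is the fact your argument is silently leaning on.
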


\begin{proof}
The statement about $\cl^{-1}(Z)$ is clear from the definition of the closure and the fact that~$Z$ is a flat.
For the cloud we use duality; see  \cref{T-DualqM} and \cref{R-cyccl}.  
Denoting by $\cl^*$ the closure in the dual \qM{} $\cM^*$, 
we obtain for any $F\in\cF(\cM)$
\begin{align*}
   F\in\cyc^{-1}(Z)&\Longleftrightarrow Z=\cyc(F)\Longleftrightarrow Z^\perp=\cyc(F)^\perp\Longleftrightarrow  Z^\perp=\cl^*(F^\perp)\\
       &\Longleftrightarrow F^\perp\leq Z^\perp\text{ and } \rho^*(Z^\perp)=\rho^*(F^\perp)\\
        &\Longleftrightarrow Z\leq F\text{ and } \dim Z^\perp+\rho(Z)-\rho(E)=\dim F^\perp+\rho(F)-\rho(E)\\
      &\Longleftrightarrow Z\leq F\text{ and } \rho(Z)-\dim Z=\rho(F)-\dim F,     
\end{align*}
where the fourth step follows from the first part.
\end{proof}

\begin{exa}\label{E-CloudFlockUniform}
Let $k\in[n]_0$ and $\cU_{k,n}:=\cU_{k,n}(E)$ be the uniform \qM{} of rank~$k$ on an $n$-dimensional ground space~$E$.
The flats are
$\cF(\cU_{k,n})=\{F\leq E\mid \dim F\leq k-1\text{ or }F=E\}$ and the cyclic spaces are
$\cO(\cU_{k,n})=\{V\leq E\mid \dim V\geq k+1\text{ or }V=0\}$, respectively.
Thus for $1\leq k\leq n-1$ we have $\cZ(\cU_{k,n})=\{0,E\}$, while $\cZ(\cU_{0,n})=\{E\}$ and $\cZ(\cU_{n,n})=\{0\}$;
see also \cite[Ex.~4.3]{GLJ24DSCyc}. 
Furthermore,  
\[
    \cl^{-1}(E)= \{V\in\cL(E)\mid \dim V\geq k\}\quad \text{ for }\ 0\leq k\leq n-1
\]
and
\[
    \cyc^{-1}(0)=\left\{\begin{array}{cl}\{F\in\cL(E)\mid \dim F\leq k-1\},&\text{if }1\leq k\leq n-1,\\  \cL(E),&\text{if }k=n.\end{array}\right.
\]
Define for $k\in[n]_0$ 
\[
    c_{k,n}=\delta_{k,n}+\sum_{j=0}^{k-1}\GaussianD{n}{j}_qx^{k-j} \text{ and }\
    f_{k,n}=\sum_{j=k}^n\GaussianD{n}{j}_q y^{j-k},
\]
where $\delta_{k,n}$ is the Kronecker-$\delta$-function.
Then $c_{k,n}=c_{\cU_{k,n},0}$ for $k\in[n]$ and $f_{k,n}=f_{\cU_{k,n},E}$ for $k\in[n-1]_0$.
\end{exa}

The rest of this section is devoted to showing that the Whitney function is fully determined by the collection of cloud and flock polynomials.
To do so, we need the following simple lemma from Linear Algebra, whose proof can be found in the appendix.

\begin{lemma}\label{L-DirCompl}
Let $F,\,Z$ be $\F_q$-vector spaces with $\dim F=f$ and $\dim Z=z$ and $Z\leq F$. 
\begin{alphalist}
\item Set $\cW=\{W\in\cL(F)\mid Z\oplus W=F\}$, that is,~$\cW$ is the collection of all direct complements of~$Z$ in~$F$. Then $|\cW|=q^{(f-z)z}$.
\item Let $V\leq Z$ and $\dim V=v$. Then the set $\{V\oplus W\mid W\in\cW\}$ has cardinality $q^{(f-z)(z-v)}$ and each subspace
      in $\{V\oplus W\mid W\in\cW\}$ is obtained $q^{(f-z)v}$ times.
\end{alphalist}
\end{lemma}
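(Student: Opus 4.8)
The plan is to prove \cref{L-DirCompl} by a direct counting argument, working over $\F=\F_q$ throughout and writing $f=\dim F$, $z=\dim Z$, with $Z\leq F$.

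For part (a), first I would count direct complements of~$Z$ in~$F$ by building one up basis-vector by basis-vector. A subspace $W$ with $Z\oplus W=F$ has dimension $f-z$, and a spanning sequence $(w_1,\ldots,w_{f-z})$ for such a $W$ is exactly an ordered tuple of vectors in $F$ that is linearly independent modulo~$Z$, i.e.\ whose images in $F/Z$ form an ordered basis of the $(f-z)$-dimensional space $F/Z$. The number of such tuples is $\prod_{i=0}^{f-z-1}(q^f-q^{z+i})=q^{z(f-z)}\prod_{i=0}^{f-z-1}(q^{f-z}-q^i)$: at the $i$-th step we may pick any vector not in the span of $Z$ together with the previously chosen $w_1,\ldots,w_i$, and that span has dimension $z+i$. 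On the other hand, each fixed complement $W$ (a space of dimension $f-z$) is spanned by exactly $\prod_{i=0}^{f-z-1}(q^{f-z}-q^i)$ ordered bases. Dividing, we get $|\cW|=q^{z(f-z)}=q^{(f-z)z}$, as claimed.

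For part (b), fix $V\leq Z$ with $\dim V=v$, and consider the map $\Phi\colon\cW\to\cL(F)$, $W\mapsto V\oplus W$ (note $V\cap W\leq Z\cap W=0$, so the sum is direct and $\Phi$ is well-defined, with $\dim\Phi(W)=v+(f-z)$). I would first show every fiber of $\Phi$ has the same size. If $\Phi(W)=\Phi(W')=:U$, then $W,W'$ are both direct complements of $V$ inside the fixed space $U$ that, moreover, lie in the fixed complement-region relative to $Z$; conversely any complement of $V$ in $U$ that meets $Z$ trivially lies in $\cW$ and maps to $U$ — and since $U=V\oplus W$ with $V\leq Z$, one checks $Z\cap U=V$, so "meeting $Z$ trivially" is the same as "being a complement of $V$ in $U$". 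Thus the fiber over $U=\Phi(W)$ is exactly the set of direct complements of $V$ in $U$, which by part (a) applied to the pair $V\leq U$ has cardinality $q^{(\dim U-\dim V)\dim V}=q^{((f-z))v}=q^{(f-z)v}$. Since all nonempty fibers have this common size $q^{(f-z)v}$, the image has cardinality $|\cW|/q^{(f-z)v}=q^{(f-z)z}/q^{(f-z)v}=q^{(f-z)(z-v)}$, and each subspace in the image is hit exactly $q^{(f-z)v}$ times.

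I expect the only real subtlety to be the bookkeeping in part (b): one must verify that for $W\in\cW$ the space $U=V\oplus W$ satisfies $Z\cap U=V$ (this uses $V\leq Z$ and $Z\cap W=0$, via a short modular-law computation: $Z\cap(V+W)=V+(Z\cap W)=V$), so that the fiber description as "complements of $V$ in $U$" is correct and part (a) can be reused cleanly. Everything else is a routine product-formula computation, and the counting in part (a) is the standard subspace count, so no genuinely hard step is anticipated.
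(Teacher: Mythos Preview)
Your proof is correct. Part~(a) is the standard count via ordered bases modulo~$Z$, and in part~(b) your fiber analysis is sound: the modular-law step $Z\cap(V+W)=V+(Z\cap W)=V$ (using $V\leq Z$ and $Z\cap W=0$) indeed identifies the fiber over $U=V\oplus W$ with the set of complements of~$V$ in~$U$, so part~(a) applied to $V\leq U$ gives the fiber size $q^{(f-z)v}$, and the image count follows.

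The paper takes a different, coordinate-based route: it fixes $F=\F_q^f$, $Z=\rs(0\mid I_z)$, and $V=\rs(0\mid I_v\mid A_2)$, parametrizes complements as $W=\rs(I_{f-z}\mid B_1\mid B_2)$, and then row-reduces the stacked matrix for $V\oplus W$ to RREF. The image is read off from the free parameter $B_2-B_1A_2\in\F_q^{(f-z)\times(z-v)}$, and the fiber size from the free choice of~$B_1$. Your approach is coordinate-free and conceptually cleaner, and it has the pleasant feature of reusing~(a) inside~(b); the paper's approach is more explicit and makes the parametrization of $\{V\oplus W\mid W\in\cW\}$ completely concrete, which can be handy if one later needs to manipulate these spaces directly. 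Either argument is entirely adequate here.
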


As a first step we establish a relation between the flock of a flat and the flock of its cyclic core.

\begin{theo}\label{T-gammaMap}
Let $F\in\cF(\cM)$ and $Z=\cyc(F)$, thus $Z\in\cZ(\cM)$. 
Let the collection~$\cW$ be as in \cref{L-DirCompl}(a). 
Then the map
\[
   \gamma:\cl^{-1}(Z)\times\cW\longrightarrow \cl^{-1}(F),\quad (V,W)\longmapsto V\oplus W
\]
is well-defined and surjective.
Moreover, for any $T\in\cl^{-1}(F)$ the fiber is of the form
\[
    \gamma^{-1}(T)=\{(T\cap Z,W)\mid W\in\cW_T\},
\]
for some subset $\cW_T\subseteq\cW$ of size $|\cW_T|=q^{(f-z)\dim(T\cap Z)}$.
In particular, if $T\in\cl^{-1}(F)$ then $T\cap Z\in\cl^{-1}(Z)$ and 
$\dim(T\cap Z)=\dim T-\dim F+\dim Z$.
\end{theo}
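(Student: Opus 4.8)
The plan is to reduce everything to the concrete descriptions in \cref{P-CloudFlockProp} — namely $\cl^{-1}(Z)=\{V\mid V\leq Z,\ \rho(V)=\rho(Z)\}$ and, since the same argument works verbatim for any flat, $\cl^{-1}(F)=\{T\mid T\leq F,\ \rho(T)=\rho(F)\}$ — combined with two further ingredients: the monotonicity of the nullity function $V\mapsto\dim V-\rho(V)$ (used already in \cref{R-DegCloudFlock}) and the identity $\dim F-\rho(F)=\dim Z-\rho(Z)$, which holds because $Z=\cyc(F)$ by \cref{R-cyccl}. Throughout I write $f=\dim F$ and $z=\dim Z$, so every $W\in\cW$ has $\dim W=f-z$ and $\rho(F)-\rho(Z)=f-z$.

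First I would verify that $\gamma$ is well-defined. For $(V,W)\in\cl^{-1}(Z)\times\cW$ we have $V\leq Z$ and $Z\cap W=0$, hence $V\cap W=0$ and $V\oplus W\leq Z+W=F$. Since $\cl(V)=Z$, monotonicity of the closure (applied to $V\leq V+W$) together with $W\leq V+W\leq\cl(V+W)$ gives $F=Z+W=\cl(V)+W\leq\cl(V+W)$, so $\rho(F)\leq\rho(\cl(V+W))=\rho(V+W)\leq\rho(F)$. Thus $\rho(V\oplus W)=\rho(F)$ and $V\oplus W\in\cl^{-1}(F)$.

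The heart of the matter — and the step I expect to be the main obstacle — is the ``in particular'' claim, i.e.\ that for $T\in\cl^{-1}(F)$ one has $T\cap Z\in\cl^{-1}(Z)$ and $\dim(T\cap Z)=\dim T-f+z$; equivalently, $T+Z=F$ and $\rho(T\cap Z)=\rho(Z)$. For the first, set $U=T+Z$, so $Z\leq U\leq F$; monotonicity of the nullity function squeezes $\dim Z-\rho(Z)\leq\dim U-\rho(U)\leq\dim F-\rho(F)=\dim Z-\rho(Z)$, and since also $\rho(U)=\rho(F)$ (as $T\leq U\leq F$ and $\rho(T)=\rho(F)$) we get $\dim U=\dim F$, hence $T+Z=F$ and $\dim(T\cap Z)=\dim T+z-f$. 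For the rank, $T\cap Z\leq T$ and monotonicity of the nullity give $\dim(T\cap Z)-\rho(T\cap Z)\leq\dim T-\rho(F)$, whence $\rho(T\cap Z)\geq\dim(T\cap Z)-\dim T+\rho(F)=\rho(F)-(f-z)=\rho(Z)$; together with $\rho(T\cap Z)\leq\rho(Z)$ this forces equality, so $T\cap Z\in\cl^{-1}(Z)$.

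Surjectivity and the fiber count then fall out. Given $T\in\cl^{-1}(F)$, put $V=T\cap Z$ and pick any complement $W$ of $V$ in $T$; then $\dim W=\dim T-\dim V=f-z$ and $Z\cap W=(Z\cap T)\cap W=V\cap W=0$, so $Z\oplus W$ has dimension $f$ and lies in $F$, forcing $Z\oplus W=F$, i.e.\ $W\in\cW$, and $\gamma(V,W)=T$. For the fibers, if $(V,W)\in\gamma^{-1}(T)$ then $V\leq Z$ and $V\oplus W=T$ with $\dim W=f-z$ give $\dim V=\dim T-(f-z)=\dim(T\cap Z)$ and $V\leq T\cap Z$, so $V=T\cap Z$; thus $\gamma^{-1}(T)=\{(T\cap Z,W)\mid W\in\cW_T\}$ with $\cW_T=\{W\in\cW\mid(T\cap Z)\oplus W=T\}$, which is nonempty by surjectivity. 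Applying \cref{L-DirCompl}(b) with this $F$, $Z$ and $V=T\cap Z$ (so the parameter $v$ there is $\dim(T\cap Z)$), the subspace $T=V\oplus W$ is obtained exactly $q^{(f-z)v}$ times, i.e.\ $|\cW_T|=q^{(f-z)\dim(T\cap Z)}$, as required.
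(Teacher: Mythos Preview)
Your proof is correct and takes a genuinely different route from the paper's. The paper's argument rests on the structural decomposition $\cM|_F=(\cM|_F)|_Z\oplus(\cM|_F)|_W$ with free second summand (\cite[Prop.~7.5]{GLJ24DSCyc}), which gives the additive formula $\rho(V\oplus W)=\rho(V)+\dim W$ directly; well-definedness and surjectivity are then read off from this formula, and in the surjectivity step the paper first picks a complement~$Y$ of $T\cap Z$ in~$T$, extends it to some $W\in\cW$, and uses the additive formula to force $Y=W$. You bypass the $q$-matroid direct sum entirely: well-definedness comes from closure monotonicity, the key claims $T+Z=F$ and $\rho(T\cap Z)=\rho(Z)$ are squeezed out of the monotonicity of the nullity function together with $\dim F-\rho(F)=\dim Z-\rho(Z)$, and for surjectivity you observe directly that any complement of $T\cap Z$ in~$T$ already lies in~$\cW$ (via $Z\cap W=(Z\cap T)\cap W=0$ and a dimension count). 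Your approach is more elementary and self-contained within \cref{S-CloudFlock}; the paper's approach ties the result to the decomposition theory of \cite{GLJ24DSCyc}, which makes the rank computations more transparent but imports heavier machinery.
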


\begin{proof}
For any $W\in\cW$ we have $Z\oplus W=F$. Consider the restriction $\hat{\cM}=\cM|_F$ and denote its rank function again by~$\rho$.
Then \cite[Prop.~7.5]{GLJ24DSCyc} implies 
\begin{equation}\label{e-hatM}
    \hat{\cM}=\hat{\cM}|_Z\oplus\hat{\cM}|_W
\end{equation}
and $\hat{\cM}|_W$ is the free \qM{} on the ground space~$W$. Thus, $\rho(F)=\rho(Z)+\dim W$.

\underline{Well-definedness of~$\gamma$:} 
We have to show that $\cl(V\oplus W)=F$ for all $V\in\cl^{-1}(Z)$ and $W\in\cW$. 
Using \eqref{e-RhoDirect} and the fact that $\cl(V)=Z$ we obtain from \eqref{e-hatM} 
\[
  \rho(V\oplus W)=\rho(V)+\rho(W)=\rho(V)+\dim W=\rho(Z)+\dim W=\rho(F).
\]
Since $V\oplus W\leq F$ and~$F$ is a flat, this implies $\cl(V\oplus W)=F$.
\\
\underline{Surjectivity:} 
Let $T\in\cl^{-1}(F)$. Set $V=T\cap Z$. 
Furthermore, let $Y\leq T$ be such that $T=V\oplus Y$.
Then there exists $W\in\cW$ such that $Y\leq W$. 
(Indeed, let $\beta_1,\beta_2,\beta_3$ be such that~$\beta_1$ is a basis of $T\cap Z$, and $\beta_2$ is a basis of $Y$, and 
$\beta_1\cup\beta_3$ is a basis of~$Z$. 
Then $\beta_1\cup\beta_2$ is a basis of $T$ and $\beta_1\cup\beta_2\cup\beta_3$ is a basis of $T+Z$.
In particular, the last set is linearly independent. 
Hence $\beta_2$ can be extended to a basis of a direct complement of~$Z$.)
Now we obtain from \eqref{e-hatM} along with \eqref{e-RhoDirect}
\[
  \rho(V)+\dim Y=\rho(T)=\rho(F)=\rho(Z)+\dim W.
\]
Since $\dim Y\leq\dim W$ and $\rho(V)\leq \rho(Z)$, this implies $\dim Y=\dim W$ and thus $Y=W$. 
Moreover, $\rho(V)=\rho(Z)$ and thus $V\in\cl^{-1}(Z)$. 
Now we have $T=\gamma(V,W)$ and $\dim T=\dim(T\cap Z)+\dim F-\dim Z$.
This also shows that the fiber $\gamma^{-1}(T)$ is of the stated form, and its size is a consequence of \cref{L-DirCompl}(b).
\end{proof}

The following is a consequence of \cref{T-gammaMap} and \cref{L-DirCompl}(b).

\begin{cor}\label{C-SizeFlock}
Let $F\in\cF(\cM)$ and $Z=\cyc(F)$, thus $Z\in\cZ(\cM)$.  Moreover, let $\dim F=f$ and $\dim Z=z$.
Then 
\[
  \big|\{T\in\cl^{-1}(F)\mid \dim T=v+f-z\}\big|=q^{(f-z)(z-v)}\big|\{V\in\cl^{-1}(Z)\mid \dim V=v\}\big|.
\]
\end{cor}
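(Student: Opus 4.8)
The plan is to derive \cref{C-SizeFlock} directly from \cref{T-gammaMap} by a counting argument on the fibers of the map $\gamma$, restricted to the appropriate dimension strata. Fix $v\in[z]_0$ and let $A_v=\{V\in\cl^{-1}(Z)\mid \dim V=v\}$ and $B_v=\{T\in\cl^{-1}(F)\mid \dim T=v+f-z\}$. By the last sentence of \cref{T-gammaMap}, every $T\in\cl^{-1}(F)$ satisfies $\dim(T\cap Z)=\dim T-f+z$ and $T\cap Z\in\cl^{-1}(Z)$; hence $T\in B_v$ if and only if $T\cap Z\in A_v$, and moreover $\gamma$ maps the set $A_v\times\cW$ precisely onto $B_v$ (surjectivity onto $\cl^{-1}(F)$ together with the fact that a pair $(V,W)$ has $\dim(V\oplus W)=\dim V+(f-z)$).

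First I would count $|A_v\times\cW|=q^{(f-z)z}\,|A_v|$ using \cref{L-DirCompl}(a). Then I would count the same set by summing the fiber sizes over $B_v$: by \cref{T-gammaMap}, for each $T\in B_v$ the fiber $\gamma^{-1}(T)$ has size $q^{(f-z)\dim(T\cap Z)}=q^{(f-z)v}$, since $\dim(T\cap Z)=v$ for $T\in B_v$. Therefore $q^{(f-z)z}|A_v|=|B_v|\cdot q^{(f-z)v}$, which upon dividing by $q^{(f-z)v}$ gives $|B_v|=q^{(f-z)(z-v)}|A_v|$, exactly the claimed identity.

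There is essentially no obstacle here: every ingredient is already packaged in \cref{T-gammaMap} (well-definedness, surjectivity, the exact fiber structure and fiber sizes, and the dimension relation $\dim(T\cap Z)=\dim T-\dim F+\dim Z$). The one point requiring a word of care is that $\gamma$ restricted to $A_v\times\cW$ lands in $B_v$ and is onto it: for the image one notes $\dim\gamma(V,W)=\dim(V\oplus W)=v+(f-z)$ since $V\le Z$, $W\cap Z=0$; for surjectivity onto $B_v$ one uses that any $T\in B_v$ has $T\cap Z\in A_v$ and is hit by $\gamma$ via \cref{T-gammaMap}. Once this is observed the result follows by the two-way count above; the proof can be written in a few lines.
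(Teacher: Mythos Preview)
Your proof is correct and follows essentially the same approach as the paper: both derive the count directly from the fiber structure of~$\gamma$ in \cref{T-gammaMap}. The only cosmetic difference is that the paper invokes \cref{L-DirCompl}(b) to count, for each fixed $V\in A_v$, the $q^{(f-z)(z-v)}$ distinct images $V\oplus W$ and then observes (via the fiber description) that these image sets partition~$B_v$, whereas you use \cref{L-DirCompl}(a) together with the fiber sizes from \cref{T-gammaMap} to double-count $|A_v\times\cW|$; the two arguments are equivalent.
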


Now we are ready to present a $q$-analogue of the cloud-flock formula for the Whitney function of matroids;
see \cite[Thm.~2.1]{Ebe14} or \cite[Prop.~4.6]{PlBae14}.
Note that -- different from the matroid case -- the Whitney function is not simply the sum of the products of 
the cloud and flock polynomials over all cyclic flats of~$\cM$.

\begin{theo}\label{T-CloudFlock}
\[
   R_\cM=\sum_{Z\in\cZ(\cM)}\sum_{F\in\cyc^{-1}(Z)}x^{\hat{\rho}-\rho(F)}\sum_{V\in\cl^{-1}(Z)}q^{(\dim F-\dim Z)(\dim Z-\dim V)}y^{\dim V-\rho(Z)}.
\]
\end{theo}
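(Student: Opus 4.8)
The plan is to organize the sum $R_\cM=\sum_{T\leq E}x^{\hat\rho-\rho(T)}y^{\dim T-\rho(T)}$ by grouping subspaces $T\in\cL(E)$ according to their closure $\cl(T)\in\cF(\cM)$, and then further grouping the flats according to their cyclic core $\cyc(F)\in\cZ(\cM)$. The maps $\cl:\cL(E)\to\cF(\cM)$ and $\cyc:\cF(\cM)\to\cZ(\cM)$ from \eqref{e-cyccl} give a partition of $\cL(E)$ indexed by pairs $(Z,F)$ with $Z\in\cZ(\cM)$ and $F\in\cyc^{-1}(Z)$, namely $\cL(E)=\bigsqcup_{Z\in\cZ(\cM)}\bigsqcup_{F\in\cyc^{-1}(Z)}\cl^{-1}(F)$. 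Hence $R_\cM=\sum_{Z\in\cZ(\cM)}\sum_{F\in\cyc^{-1}(Z)}\sum_{T\in\cl^{-1}(F)}x^{\hat\rho-\rho(T)}y^{\dim T-\rho(T)}$.

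The key step is to rewrite the innermost sum over $T\in\cl^{-1}(F)$. First, since $\cl(T)=F$ and $F$ is a flat, every $T\in\cl^{-1}(F)$ satisfies $\rho(T)=\rho(F)$, so $x^{\hat\rho-\rho(T)}=x^{\hat\rho-\rho(F)}$ and the $x$-power factors out. For the $y$-power I use \cref{T-gammaMap}: for $T\in\cl^{-1}(F)$ we have $T\cap Z\in\cl^{-1}(Z)$ and $\dim(T\cap Z)=\dim T-\dim F+\dim Z$, which rearranges to $\dim T=\dim(T\cap Z)+\dim F-\dim Z$. Combined with $\rho(T)=\rho(F)=\rho(Z)+(\dim F-\dim Z)$ (the last equality also from \cref{T-gammaMap}, via $\rho(F)=\rho(Z)+\dim W$ and $\dim W=\dim F-\dim Z$), we get the nullity $\dim T-\rho(T)=\dim(T\cap Z)-\rho(Z)$. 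So each $T\in\cl^{-1}(F)$ contributes $x^{\hat\rho-\rho(F)}y^{\dim(T\cap Z)-\rho(Z)}$, a quantity depending on $T$ only through $\dim(T\cap Z)$.

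It then remains to count, for each $V\in\cl^{-1}(Z)$, how many $T\in\cl^{-1}(F)$ have $T\cap Z=V$; equivalently, how many have $\dim(T\cap Z)=\dim V$ (since $T\cap Z\in\cl^{-1}(Z)$ is itself in the fiber). By \cref{C-SizeFlock} (with $f=\dim F$, $z=\dim Z$, $v=\dim V$), the number of $T\in\cl^{-1}(F)$ with $\dim T=v+f-z$, i.e.\ with $\dim(T\cap Z)=v$, equals $q^{(f-z)(z-v)}|\{V'\in\cl^{-1}(Z)\mid\dim V'=v\}|$. Summing the contribution $y^{v-\rho(Z)}$ with this multiplicity over all dimension values $v$, and noting $f-z=\dim F-\dim Z$ and $z-v=\dim Z-\dim V$, gives
\[
  \sum_{T\in\cl^{-1}(F)}y^{\dim T-\rho(T)}=\sum_{V\in\cl^{-1}(Z)}q^{(\dim F-\dim Z)(\dim Z-\dim V)}y^{\dim V-\rho(Z)}.
\]
Multiplying by $x^{\hat\rho-\rho(F)}$ and re-assembling the outer sums yields the claimed formula. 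The one point requiring care is the transition from "count by dimension" in \cref{C-SizeFlock} to "the sum over $T$ equals the sum over $V$": this is legitimate because the bijection $\gamma$ of \cref{T-gammaMap} sends fibers over $T$ to $T\cap Z$, so grouping the $T$'s with a fixed value of $\dim(T\cap Z)$ produces exactly $q^{(f-z)(z-v)}$ many $T$'s for each $V\in\cl^{-1}(Z)$ of that dimension — there is no double counting and no omission. I do not anticipate a genuine obstacle; the main bookkeeping is just keeping the three-fold index structure $(Z,F,T)\leadsto(Z,F,V)$ straight and invoking \cref{T-gammaMap} and \cref{C-SizeFlock} at the right moments.
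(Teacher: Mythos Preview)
Your proposal is correct and follows essentially the same approach as the paper: partition $\cL(E)$ first by $\cl(T)=F$ and then by $\cyc(F)=Z$, pull out $x^{\hat\rho-\rho(F)}$, and use \cref{T-gammaMap} together with \cref{C-SizeFlock} to rewrite the inner sum over $T\in\cl^{-1}(F)$ as the stated sum over $V\in\cl^{-1}(Z)$. The only cosmetic difference is that the paper obtains $\rho(F)=\rho(Z)+\dim F-\dim Z$ directly from \cref{R-cyccl}(a) rather than via the decomposition $\rho(F)=\rho(Z)+\dim W$ inside the proof of \cref{T-gammaMap}.
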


\begin{proof}
Let us first fix some $F\in\cF(\cM)$ and let $Z=\cyc(F)$. Let $\dim F=f$ and $\dim Z=z$. 
Then $f-\rho(F)=z-\rho(Z)$ thanks to \cref{R-cyccl}(a).
Using \cref{T-gammaMap} and \cref{C-SizeFlock} we first compute
\begin{align*}
  \sum_{T\in\cl^{-1}(F)}y^{\dim T-\rho(T)}&=\sum_{v=0}^z\sum_{T\in\cl^{-1}(F)\atop \dim(T\cap Z)=v}y^{v+f-z-\rho(F)}
      =\sum_{v=0}^zq^{(f-z)(z-v)}\sum_{V\in\cl^{-1}(Z)\atop\dim V=v}y^{v+f-z-\rho(F)}\\
    &=\sum_{v=0}^zq^{(f-z)(z-v)}\sum_{V\in\cl^{-1}(Z)\atop\dim V=v}y^{v-\rho(Z)}\\
    &=\sum_{V\in\cl^{-1}(Z)}q^{(f-z)(z-\dim V)}y^{\dim V-\rho(Z)}.
\end{align*}
From this we obtain
\begin{align*}
   R_\cM&=\sum_{T\in\cL(E)}x^{\hat{\rho}-\rho(T)}y^{\dim T-\rho(T)}=
       \sum_{F\in\cF(\cM)}\sum_{T\in\cl^{-1}(F)}x^{\hat{\rho}-\rho(T)}y^{\dim T-\rho(T)}\\
       &=\sum_{F\in\cF(\cM)}x^{\hat{\rho}-\rho(F)}\sum_{T\in\cl^{-1}(F)}y^{\dim T-\rho(T)}\\
       &=\sum_{F\in\cF(\cM)}x^{\hat{\rho}-\rho(F)}\sum_{V\in\cl^{-1}(\cyc(F))}q^{(\dim F-\dim\cyc(F))(\dim\cyc(F)-\dim V)}y^{\dim V-\rho(\cyc(F))}\\
       &=\sum_{Z\in\cZ(\cM)}\sum_{F\in\cyc^{-1}(Z)}x^{\hat{\rho}-\rho(F)}\sum_{V\in\cl^{-1}(Z)}q^{(\dim F-\dim Z)(\dim Z-\dim V)}y^{\dim V-\rho(Z)}.
       \qedhere
\end{align*}
\end{proof}

It is not immediate from the above result that the Whitney function depends only on the cloud and flock polynomials of the cyclic flats of~$\cM$.
Fortunately, this is indeed the case as we can see from the following expression.

\begin{cor}\label{C-CloudFlock2}
For any $Z\in\cZ(\cM)$ write 
\[
   c_{\cM,Z}=\sum_{\alpha=0}^{\deg c_{\cM,Z}}c_{\alpha,Z}x^\alpha\ \text{ and }\ 
   f_{\cM,Z}=\sum_{\beta=0}^{\deg f_{\cM,Z}}f_{\beta,Z}y^\beta.
\]
Then
\[
  R_{\cM}=\sum_{Z\in\cZ(\cM)}\sum_{\alpha=0}^{\deg c_{\cM,Z}}\sum_{\beta=0}^{\deg f_{\cM,Z}}
  q^{(\deg c_{\cM,Z}-\alpha)(\deg f_{\cM,Z}-\beta)}
                c_{\alpha,Z}f_{\beta,Z}x^\alpha y^\beta.
\]
\end{cor}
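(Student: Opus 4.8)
The plan is to reorganize the triple sum in \cref{T-CloudFlock} by collecting terms according to the exponents of $x$ and $y$ rather than over the individual flats $F$ and spaces $V$. Fix $Z\in\cZ(\cM)$. For a flat $F\in\cyc^{-1}(Z)$, write $\alpha=\hat{\rho}-\rho(F)$ for its corank. By \cref{R-cyccl}(a) we have $\dim F-\rho(F)=\dim Z-\rho(Z)$, hence $\dim F-\dim Z=\rho(F)-\rho(Z)=(\hat{\rho}-\alpha)-\rho(Z)=\deg c_{\cM,Z}-\alpha$, using $\deg c_{\cM,Z}=\hat{\rho}-\rho(Z)$ from \cref{R-DegCloudFlock}. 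Similarly, for $V\in\cl^{-1}(Z)$ we have $\rho(V)=\rho(Z)$ by \cref{P-CloudFlockProp}, so writing $\beta=\dim V-\rho(V)=\dim V-\rho(Z)$ for its nullity we get $\dim Z-\dim V=(\dim Z-\rho(Z))-\beta=\deg f_{\cM,Z}-\beta$. Consequently the power of $q$ occurring in \cref{T-CloudFlock} equals $q^{(\deg c_{\cM,Z}-\alpha)(\deg f_{\cM,Z}-\beta)}$ and depends only on $\alpha$, $\beta$ and the two degrees.

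Next I would carry out the grouping. By the definition of the cloud polynomial, for each $\alpha\in\{0,\dots,\deg c_{\cM,Z}\}$ the coefficient $c_{\alpha,Z}$ equals $|\{F\in\cyc^{-1}(Z)\mid \hat{\rho}-\rho(F)=\alpha\}|$, and by the definition of the flock polynomial $f_{\beta,Z}=|\{V\in\cl^{-1}(Z)\mid \dim V-\rho(Z)=\beta\}|$. Splitting the two inner sums of \cref{T-CloudFlock} along these level sets and inserting the identity for the $q$-exponent from the previous paragraph, the contribution of $Z$ to $R_\cM$ becomes $\sum_{\alpha=0}^{\deg c_{\cM,Z}}\sum_{\beta=0}^{\deg f_{\cM,Z}} q^{(\deg c_{\cM,Z}-\alpha)(\deg f_{\cM,Z}-\beta)}c_{\alpha,Z}f_{\beta,Z}x^\alpha y^\beta$. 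Summing over $Z\in\cZ(\cM)$ then yields exactly the claimed expression.

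There is essentially no deep obstacle here; the content is entirely a reindexing of \cref{T-CloudFlock}. The one point that requires care is the pair of exponent identities $\dim F-\dim Z=\deg c_{\cM,Z}-\alpha$ and $\dim Z-\dim V=\deg f_{\cM,Z}-\beta$, since it is these that make the $q$-exponent expressible purely in terms of $\alpha$, $\beta$, $\deg c_{\cM,Z}$ and $\deg f_{\cM,Z}$; they rest on the fact that a flat and its cyclic core have equal nullity (\cref{R-cyccl}(a)) and that every space in a flock has the same rank as the flat it closes to (\cref{P-CloudFlockProp}). Once these are in place the corollary is immediate, and in particular it exhibits $R_\cM$ as a function of the multiset $\{(c_{\cM,Z},f_{\cM,Z})\mid Z\in\cZ(\cM)\}$ alone.
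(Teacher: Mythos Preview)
Your proof is correct and follows essentially the same approach as the paper's: both derive the two key identities $\dim F-\dim Z=\deg c_{\cM,Z}-\alpha$ (via \cref{R-cyccl}(a) and \cref{R-DegCloudFlock}) and $\dim Z-\dim V=\deg f_{\cM,Z}-\beta$, and then regroup the sum in \cref{T-CloudFlock} according to the coefficients of the cloud and flock polynomials. Your write-up is in fact more explicit than the paper's, which compresses the final regrouping step into the single phrase ``the result follows from \cref{T-CloudFlock} and \cref{D-CloudFlock}.''
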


\begin{proof}
Consider $R_\cM$ as in \cref{T-CloudFlock}.
Let $Z\in\cZ(\cM)$.
Thanks to Remarks~\ref{R-DegCloudFlock} and~\ref{R-cyccl}(a) we have for any $F\in\cyc^{-1}(Z)$ and $V\in\cl^{-1}(Z)$
\begin{align*}
    &\dim F-\dim Z=\rho(F)-\rho(Z)=\hat{\rho}-\rho(Z)-(\hat{\rho}-\rho(F))=\deg c_{\cM,Z}-(\hat{\rho}-\rho(F)),\\
    &\dim Z-\dim V=\dim Z-\rho(Z)-(\dim V-\rho(V))=\deg f_{\cM,Z}-(\dim V-\rho(Z)).
\end{align*}
Now the result follows from \cref{T-CloudFlock} and \cref{D-CloudFlock}.
\end{proof}

The last result can be expressed more elegantly by defining a suitable product between polynomials in $\Z[x]$ and $\Z[y]$.

\begin{rem}\label{R-CloudFlock3}
For $f=\sum_{i=0}^N f_i x^i\in\Z[x]$ and $g=\sum_{j=0}^M g_jy^j\in\Z[y]$ define
\[
     f*g:=\sum_{i=0}^N\sum_{j=0}^M q^{(\deg f-i)(\deg g-j)}f_ig_jx^iy^j.
\]
Then \cref{C-CloudFlock2} reads as $R_\cM=\sum_{Z\in\cZ(\cM)} c_{\cM,Z}* f_{\cM,Z}$.
\end{rem}

We close this section with the following example.

\begin{exa}\label{E-CloudFlock2}
Consider the field $\F_{2^7}$ with primitive element~$\omega$ satisfying $\omega^7+\omega+1=0$. 
Let 
\[
   G=\begin{pmatrix}1&\omega^{90}&0&\omega^{10}&0&\omega^4\\0&0&1&\omega^7&0&\omega^{90}\\0&0&0&0&1&\omega^{32}\end{pmatrix}
\]
and $\cM=\cM_G=(\F_2^6,\rho)$.
Using \cref{D-Polys} one obtains
\begin{equation}\label{e-WhitExa1}
    R_\cM=x^3 + (2y + 63)x^2 + (y^2 + 42y + 649)x + y^3 + 63y^2 + 650y + 1353.
\end{equation}
Furthermore, there are 8 cyclic flats and they are given by
\begin{align*}
    &Z_0=0,\ Z_1=\subspace{e_1,e_2},\ Z_2=\subspace{e_1+e_2+e_4,e_3+e_4},\\
    &Z_3=\subspace{e_1+e_3+e_4,e_2+e_4+e_5,e_6},\ Z_4=\subspace{e_1+e_4+e_5,e_2+e_3+e_5,e_6},\\
    &Z_5=\subspace{e_2+e_5,e_3+e_5+e_6,e_4},\ Z_6=\subspace{e_1,e_2,e_3,e_4},\ Z_7=\F_2^6.
\end{align*}
Hence the lattice of cyclic flats has the form
\[
\begin{array}{l}
   \begin{xy}
   (25,0)*+{Z_0}="c0";%
   (10,10)*+{Z_1}="c1";%
   (20,10)*+{Z_2}="c2";%
   (30,10)*+{Z_3}="c3";%
   (40,10)*+{Z_4}="c4";%
   (50,10)*+{Z_5}="c5";%
   (15,20)*+{Z_6}="c6";
   (25,30)*+{Z_7}="c7";%
    {\ar@{-}"c0";"c1"};
    {\ar@{-}"c0";"c2"};
    {\ar@{-}"c0";"c3"};
    {\ar@{-}"c0";"c4"};
    {\ar@{-}"c0";"c5"};
    {\ar@{-}"c1";"c6"};
    {\ar@{-}"c2";"c6"};
    {\ar@{-}"c3";"c7"};
    {\ar@{-}"c4";"c7"};
    {\ar@{-}"c5";"c7"};    
    {\ar@{-}"c6";"c7"};
   \end{xy}
\end{array}
\]
(for the lattice structure of $\cZ(\cM)$ see \cite[Prop.~3.24]{AlBy24} or \cite[Cor.~4.2]{GLJ24DSCyc}). According to \cref{C-CloudFlock2} we have $R_{\cM}=\sum_{Z\in\cZ(\cM)} h_{\cM,Z}$, where 
\begin{equation}\label{e-hmz}
   h_{\cM,Z}=\sum_{\alpha=0}^{\deg c_{\cM,Z}}\sum_{\beta=0}^{\deg f_{\cM,Z}}
                  q^{(\deg c_{\cM,Z}-\alpha)(\deg f_{\cM,Z}-\beta)}c_{\alpha,Z}f_{\beta,Z}x^\alpha y^\beta
\end{equation}
with the notation as in that corollary.
We obtain the following data
\[
        \begin{array}{|c||c|c|c|}
        \hline
         \text{cyclic flat }&c_{\cM,Z_i}&f_{\cM,Z_i}&h_{\cM,Z_i}\\ \hline\hline
         Z_0 & x^3 + 57x^2 + 451x   & 1 & x^3 + 57x^2 + 451x\\ \hline
         Z_1&x^2 + 12 x & y+3 &(y + 3)x^2 + (12y + 72)x \\ \hline
         Z_2& x^2+12x &y+3 &(y + 3)x^2 + (12y + 72)x\\ \hline
         Z_3&x& y+7& (y + 7)x \\ \hline 
         Z_4&x & y+7&(y + 7)x \\ \hline 
         Z_5&x &y+7 & (y + 7)x \\ \hline 
         Z_6&x & y^2 + 15y + 33 &(y^2 + 15y + 33)x \\ \hline 
         Z_7&1& y^3 + 63y^2 + 650y + 1353 & y^3 + 63y^2 + 650y + 1353\\ \hline
         \end{array}
\]
The sum of the last column is indeed in $R_\cM$ as given in \eqref{e-WhitExa1}.
\end{exa}

\section{Configurations and Cloud-Flock Lattices}\label{S-Config}

Recall that $\cZ(\cM)$ is a lattice whose vertices are labeled by the cyclic flats.
Together with the rank values of the cyclic flats it fully determines the entire \qM{}; see \eqref{e-RhoqM}.
We will show below that for full \qM{}s the Whitney function, and in fact the cloud and flock polynomials, are determined by 
less information, namely the configuration of~$\cM$, defined as follows.
The result will be generalized to arbitrary \qM{}s in \cref{S-DirSums}.
For matroids, configurations were introduced in \cite[Def.~2]{Ebe14}.

\begin{defi}\label{D-Config}
Let $\cM$ be a \qM{} of rank~$\hat{\rho}$. 
\begin{alphalist}
\item For $Z\in\cZ(\cM)$ set $\lambda(Z)=(\hat{\rho}-\rho(Z),\dim Z-\rho(Z))$, that is $\lambda(Z)$ is 
      the \textbf{corank-nullity pair} of~$Z$.
      Define the \textbf{configuration} of~$\cM$, denoted by $\Config(\cM)$, as the lattice with underlying set $\{\lambda(Z)\mid Z\in\cZ(\cM)\}$ and the partial order $\lambda(Z)\leq \lambda(Z')$ if $Z\leq Z'$. Thus 
      \[
        \lambda(Z)\wedge\lambda(Z')=\lambda(Z\wedge Z')\ \text{ and }\
        \lambda(Z)\vee\lambda(Z')=\lambda(Z\vee Z').
      \]
\item Define the \textbf{cloud-flock lattice} of~$\cM$, denoted by $\CF(\cM)$, as the lattice with underlying set 
      $\{(c_{\cM,Z},f_{\cM,Z})\mid Z\in\cZ(\cM)\}$ and the partial $(c_{\cM,Z},f_{\cM,Z})\leq (c_{\cM,Z'},f_{\cM,Z'})$ if $Z\leq Z'$. Thus
      \begin{align*}
        (c_{\cM,Z},f_{\cM,Z})\wedge (c_{\cM,Z'},f_{\cM,Z'})&=(c_{\cM,Z\wedge Z'},f_{\cM,Z\wedge Z'}),\\
        (c_{\cM,Z},f_{\cM,Z})\vee (c_{\cM,Z'},f_{\cM,Z'})&=(c_{\cM,Z\vee Z'},f_{\cM,Z\vee Z'}).
      \end{align*}
\end{alphalist}
\end{defi}

By definition, $\cZ(\cM),\,\CF(\cM)$, and $\Config(\cM)$ are isomorphic lattices. 

\begin{exa}\label{E-ConfigCF}
Consider \cref{E-CloudFlock2}. Then $\CF(\cM)$ and $\Config(\cM)$ are given by 
\[
\begin{array}{l}
   \begin{xy}
   (25,0)*+{\mbox{${\scriptstyle(x^3\!+\!57x^2\!+\!451x,\,1)}$}}="c0";%
   (0,10)*+{\mbox{${\scriptstyle(x^2\!+\!12x,\,y\!+\!3)}$}}="c1";%
   (20,10)*+{\mbox{${\scriptstyle(x^2\!+\!12x,\,y\!+\!3)}$}}="c2";%
   (40,10)*+{\mbox{${\scriptstyle(x,\,y\!+\!7)}$}}="c3";%
   (60,10)*+{\mbox{${\scriptstyle(x,\,y\!+\!7)}$}}="c4";%
   (80,10)*+{\mbox{${\scriptstyle(x,\,y\!+\!7)}$}}="c5";%
   (10,20)*+{\mbox{${\scriptstyle(x,\,y^2\!+\!15y\!+\!33)}$}}="c6";%
   (25,30)*+{\mbox{${\scriptstyle(1,\,y^3\!+\!63y^2\!+\!650y\!+\!1353)}$}}="c7";%
    {\ar@{-}"c0";"c1"};
    {\ar@{-}"c0";"c2"};
    {\ar@{-}"c0";"c3"};
    {\ar@{-}"c0";"c4"};
    {\ar@{-}"c0";"c5"};
    {\ar@{-}"c1";"c6"};
    {\ar@{-}"c2";"c6"};
    {\ar@{-}"c3";"c7"};
    {\ar@{-}"c4";"c7"};
    {\ar@{-}"c5";"c7"};    
    {\ar@{-}"c6";"c7"};
   \end{xy}
\end{array}\hspace*{6cm}\mbox{}
\]

\vspace*{-1.6cm}
\[
\mbox{}\hspace*{6cm}
\begin{array}{l}
   \begin{xy}
   (25,0)*+{\mbox{$(3,0)$}}="c0";%
   (0,10)*+{\mbox{$(2,1)$}}="c1";%
   (20,10)*+{\mbox{$(2,1)$}}="c2";%
   (40,10)*+{\mbox{$(1,1)$}}="c3";%
   (60,10)*+{\mbox{$(1,1)$}}="c4";%
   (80,10)*+{\mbox{$(1,1)$}}="c5";%
   (10,20)*+{\mbox{$(1,2)$}}="c6";%
   (25,30)*+{\mbox{$(0,3)$}}="c7";%
    {\ar@{-}"c0";"c1"};
    {\ar@{-}"c0";"c2"};
    {\ar@{-}"c0";"c3"};
    {\ar@{-}"c0";"c4"};
    {\ar@{-}"c0";"c5"};
    {\ar@{-}"c1";"c6"};
    {\ar@{-}"c2";"c6"};
    {\ar@{-}"c3";"c7"};
    {\ar@{-}"c4";"c7"};
    {\ar@{-}"c5";"c7"};    
    {\ar@{-}"c6";"c7"};
   \end{xy}
\end{array}
\]
\end{exa}

Clearly, $\cZ(\cM)$ determines $\CF(\cM)$, which in turn determines $\Config(\cM)$ because the corank and nullity are 
the degrees of the cloud and flock polynomials, respectively; see \cref{R-DegCloudFlock}.
Note that $\CF(\cM)$ carries strictly less information than the lattice $\cZ(\cM)$.
For instance, the non-equivalent \qM{}s in \cref{E-NonIsoWhitney} turn out to have the same CF-lattices.
We will see another instance in \cref{E-CloudFlock3}.

In the main result of this section we will prove that the configuration determines the cloud-flock lattice, and thus the Whitney function.
In order to do so, we need some preparation.
Recall the Correspondence Theorem from Linear Algebra:
Let $W$ be a subspace of the vector space~$V$.
Then $X\mapsto X/W$ provides us with a lattice isomorphism between 
$\cL_W(V):=\{X\in\cL(V)\mid W\leq X\}$ and $\cL(V/W)$, where in either case the meet (resp., join) is the intersection (resp., sum).
This lattice isomorphism immediately leads to descriptions of the flats and cyclic spaces of certain restrictions and contractions.
The straightforward proof of the following result can be found in the appendix.
For restrictions and contractions recall \cref{D-RestrContr}.
 
\begin{lemma}\label{L-RestrContr}
Let $\hat{F}\in\cF(\cM),\,\hat{O}\in\cO(\cM)$, and $\hat{Z}\in\cZ(\cM)$.
\begin{alphalist}
\item $\cF(\cM|\hat{F})=\{F\in\cF(\cM)\mid F\leq \hat{F}\}$;
\item $\cF(\cM/\hat{F})=\{F/\hat{F}\mid F\in\cF(\cM),\,\hat{F}\le F\}$;
\item $\cO(\cM|\hat{O})=\{O\mid O\in\cO(\cM),\,O\leq\hat{O}\}$;
\item $\cO(\cM/\hat{O})=\{O/\hat{O}\mid O\in\cO(\cM),\,\hat{O}\leq O\}$.
\item Set $\cM'=\cM/\hat{Z}$. Then $0\in\cZ(\cM')$ and $\cyc_{\cM'}^{-1}(0)=\{F/\hat{Z}\mid F\in\cyc_{\cM}^{-1}(\hat{Z})\}$.
\end{alphalist}
\end{lemma}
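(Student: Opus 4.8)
The plan is to prove \cref{L-RestrContr} by repeatedly exploiting the Correspondence Theorem isomorphism $\cL_W(V)\cong\cL(V/W)$ together with the fact that rank, and hence flatness and cyclicity, are computed "locally" once we restrict or contract. For part (a), I would observe that $\cM|\hat{F}$ has the same rank function as $\cM$ on subspaces of $\hat{F}$, so a subspace $F\le\hat{F}$ is a flat of $\cM|\hat{F}$ iff $\rho(F)<\rho(F+\subspace{x})$ for all $x\in\hat{F}\setminus F$; I then need to upgrade this to "for all $x\in E\setminus F$". The forward direction (flat in $\cM\Rightarrow$ flat in $\cM|\hat{F}$) is immediate since the defining inequality for $E\setminus F$ in particular covers $\hat{F}\setminus F$. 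For the reverse, if $F$ is a flat of $\cM|\hat{F}$ but not of $\cM$, there is $x\in E\setminus F$ with $\rho(F+\subspace{x})=\rho(F)$; since $\hat{F}$ is a flat of $\cM$ containing $F$, monotonicity forces $\subspace{x}\le\cl(F)\le\cl(\hat{F})=\hat{F}$, contradicting $x\notin\hat{F}$ unless $x\in F$. So $F$ must already be a flat of $\cM$.

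For (b) I would use the contraction rank formula $\tilde\rho(V/\hat{F})=\rho(V)-\rho(\hat{F})$ for $V\supseteq\hat{F}$, so that $V/\hat{F}$ is a flat of $\cM/\hat{F}$ iff $\rho(V)<\rho(V+\subspace{x})$ for all $x\in E\setminus V$, i.e.\ iff $V$ itself is a flat of $\cM$ (here every $x\notin V$ automatically gives $V+\subspace x\supseteq\hat F$, so no subtlety arises). Combined with the Correspondence Theorem this gives the stated bijection. Parts (c) and (d) are the order-dual statements and I would derive them either directly from the definition "$\rho(W)=\rho(O)$ for all $W\in\Hyp(O)$" — again using that restriction preserves rank on subspaces, and contraction shifts rank by the constant $\rho(\hat{O})$ while $\Hyp(O/\hat O)$ corresponds to $\{W/\hat O\mid W\in\Hyp(O),\,\hat O\le W\}$ — or, more slickly, by dualizing (a) and (b) via \cref{R-FlatsOpen} and the compatibility of restriction/contraction with duality.

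For part (e), set $\cM'=\cM/\hat{Z}$. Since $\hat Z$ is both a flat and cyclic, part (b) with $F=\hat Z$ shows $\hat Z/\hat Z=0$ is a flat of $\cM'$, and part (d) with $O=\hat Z$ shows $0$ is cyclic in $\cM'$; hence $0\in\cZ(\cM')$. For the description of the cloud $\cyc_{\cM'}^{-1}(0)$, I would combine (a)/(b): by \cref{P-CloudFlockProp} applied in $\cM'$, a flat $G\in\cF(\cM')$ lies in $\cyc_{\cM'}^{-1}(0)$ iff $\dim G-\rho'(G)=0$, i.e.\ iff $G$ is independent. Writing $G=F/\hat Z$ with $F\in\cF(\cM)$, $\hat Z\le F$ (part (b)), the condition $\dim(F/\hat Z)=\rho'(F/\hat Z)=\rho(F)-\rho(\hat Z)$ becomes $\dim F-\rho(F)=\dim\hat Z-\rho(\hat Z)$, which by \cref{P-CloudFlockProp} applied in $\cM$ is exactly the condition $F\in\cyc_{\cM}^{-1}(\hat Z)$ (using that $\hat Z=\cyc(F)$ forces $\dim\hat Z-\rho(\hat Z)=\dim F-\rho(F)$, and conversely $\hat Z\le F$ with equal nullities forces $\cyc(F)=\hat Z$). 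This yields the claimed bijection $F\mapsto F/\hat Z$.

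I expect the only mild obstacle to be the "upgrade" step in part (a): turning the local flatness condition on $\hat F\setminus F$ into the global one on $E\setminus F$. The key is that $\hat F$ being a flat of $\cM$ means $\cl(\hat F)=\hat F$, so nothing outside $\hat F$ can be rank-absorbed by anything inside $\hat F$; once this is spelled out the rest is bookkeeping with the Correspondence Theorem and the rank formulas for restriction and contraction. Everything else is routine and, as the statement notes, can be relegated to the appendix.
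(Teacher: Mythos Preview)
Your proposal is essentially correct and tracks the paper's proof closely, with one caveat in part~(d). Your direct argument there only treats hyperplanes $W\in\Hyp(O)$ with $\hat{O}\le W$, since those are the ones corresponding to hyperplanes of $O/\hat{O}$. But to conclude that $O$ is cyclic in~$\cM$ you must also handle hyperplanes $W$ of~$O$ with $\hat{O}\not\le W$. The paper fills this gap by using that $\hat{O}$ is cyclic: then $\hat{O}\le\cyc(O)$, and picking $x\in\hat{O}$ with $W+\subspace{x}=O$, the definition of the cyclic core forces $\rho(W)=\rho(O)$. Your alternative dualization route via \cref{R-FlatsOpen} does avoid this issue and is valid, provided you invoke the standard identities $(\cM/\hat{O})^*\approx\cM^*|\hat{O}^\perp$ and $(\cM|\hat{O})^*\approx\cM^*/\hat{O}^\perp$.

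For part~(a), your closure argument ($x\in\cl(F)\le\cl(\hat{F})=\hat{F}$) is a clean variant of the paper's approach, which instead invokes an external lemma to pass from $\rho(\hat{F})<\rho(\hat{F}+\subspace{x})$ to $\rho(F)<\rho(F+\subspace{x})$. For~(b) the paper simply cites the literature, while you give the direct verification; and your argument for~(e) is identical to the paper's, using \cref{P-CloudFlockProp} twice.
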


\begin{cor}\label{C-CloudFlockRestrContr}
Let $\hat{Z}\in\cZ(\cM)$. Then $c_{\cM,\hat{Z}}=c_{\cM/\hat{Z},0}$ and $f_{\cM,\hat{Z}}=f_{\cM|\hat{Z},\hat{Z}}$.
\end{cor}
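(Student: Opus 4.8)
The plan is to show that the two operators in~\eqref{e-cyccl} restrict/quotient correctly and that the exponents appearing in the cloud and flock polynomials are unchanged when we pass to the appropriate minor. For the cloud identity $c_{\cM,\hat{Z}}=c_{\cM/\hat{Z},0}$, I would first invoke \cref{L-RestrContr}(e), which gives a bijection $\cyc_{\cM}^{-1}(\hat{Z})\longrightarrow\cyc_{\cM'}^{-1}(0)$, $F\mapsto F/\hat{Z}$, where $\cM'=\cM/\hat{Z}$. It then remains to check that the exponent attached to $F$ in $c_{\cM,\hat{Z}}$ equals the exponent attached to $F/\hat{Z}$ in $c_{\cM',0}$. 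Writing $\hat{\rho}=\rho(\cM)$ and $\rho'$ for the rank function of $\cM'$, we have $\rho(\cM')=\hat{\rho}-\rho(\hat{Z})$ by \cref{D-RestrContr}, and $\rho'(F/\hat{Z})=\rho(F)-\rho(\hat{Z})$, so the corank of $F$ in $\cM$ is $\hat{\rho}-\rho(F)=\rho(\cM')-\rho'(F/\hat{Z})$, which is exactly the corank of $F/\hat{Z}$ in $\cM'$. Since $0\in\cZ(\cM')$ by \cref{L-RestrContr}(e), the polynomial $c_{\cM',0}$ is defined, and summing $x^{\rho(\cM')-\rho'(F/\hat{Z})}$ over $\cyc_{\cM'}^{-1}(0)$ gives term-by-term the same polynomial as summing $x^{\hat{\rho}-\rho(F)}$ over $\cyc_{\cM}^{-1}(\hat{Z})$.

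For the flock identity $f_{\cM,\hat{Z}}=f_{\cM|\hat{Z},\hat{Z}}$, I would use \cref{P-CloudFlockProp}, which characterizes $\cl^{-1}(\hat{Z})=\{V\in\cL(E)\mid V\leq\hat{Z}\text{ and }\rho(V)=\rho(\hat{Z})\}$. Every such $V$ lies in $\hat{Z}$, hence in $\cL(\hat{Z})$; and by \cref{D-RestrContr} the rank function $\hat{\rho}$ of $\cM|\hat{Z}$ agrees with $\rho$ on all subspaces of $\hat{Z}$. So I need to know that $\hat{Z}$ is a cyclic flat of $\cM|\hat{Z}$ (it is obviously the top element, so a flat; and it is cyclic in $\cM$, hence every hyperplane of $\hat{Z}$ has rank $\rho(\hat{Z})$ — a property internal to subspaces of $\hat{Z}$ — so $\hat{Z}\in\cO(\cM|\hat{Z})$, which also follows directly from \cref{L-RestrContr}(c)). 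Thus $\hat{Z}\in\cZ(\cM|\hat{Z})$ and $f_{\cM|\hat{Z},\hat{Z}}$ is defined, and $\cl^{-1}_{\cM|\hat{Z}}(\hat{Z})=\{V\in\cL(\hat{Z})\mid \rho(V)=\rho(\hat{Z})\}=\cl^{-1}_{\cM}(\hat{Z})$ again by \cref{P-CloudFlockProp} applied inside $\cM|\hat{Z}$. Finally, the exponent $\dim V-\hat{\rho}(V)=\dim V-\rho(V)=\dim V-\rho(\hat{Z})$ attached to $V$ is literally the same in both polynomials (recall all $V$ here satisfy $\rho(V)=\rho(\hat Z)$), so the two flock polynomials coincide.

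I expect no serious obstacle here; the only points requiring care are verifying that $0$ is a cyclic flat of $\cM/\hat{Z}$ and that $\hat{Z}$ is a cyclic flat of $\cM|\hat{Z}$ — both of which are already handed to us by \cref{L-RestrContr}(c),(e) — and keeping the bookkeeping of the rank normalizations $\rho(\cM/\hat{Z})=\hat{\rho}-\rho(\hat{Z})$ straight so that the $x$-exponents match on the nose. The whole argument is therefore a short matter of combining \cref{L-RestrContr} with \cref{P-CloudFlockProp} and \cref{D-CloudFlock}.
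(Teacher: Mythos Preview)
Your proposal is correct and follows essentially the same approach as the paper: for the cloud polynomial you use \cref{L-RestrContr}(e) to get the bijection $F\mapsto F/\hat{Z}$ and verify the coranks match, exactly as the paper does, and for the flock polynomial you spell out in detail what the paper simply calls ``obvious'' (namely that $\cl^{-1}(\hat{Z})$ consists only of subspaces of~$\hat{Z}$ and the rank values are unchanged upon restriction). The only cosmetic point is that you reuse the symbol~$\hat{\rho}$ for the rank function of $\cM|\hat{Z}$, which clashes with the paper's convention $\hat{\rho}=\rho(\cM)$; renaming it would avoid confusion.
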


\begin{proof}
The identity for the flock polynomial is obvious.
For the cloud polynomial let $\cM'=\cM/\hat{Z}=(E/\hat{Z},\rho')$.
Then $c_{\cM',0}=\sum_{F/\hat{Z}\in\cyc^{-1}(0)}x^{\rho'(E/\hat{Z})-\rho'(F/\hat{Z})}=\sum_{F/\hat{Z}\in\cyc^{-1}(0)}x^{\hat{\rho}-\rho(F)}$
and the result follows from \cref{L-RestrContr}(e).
\end{proof}

The next result will be needed for an inductive argument in the proof of the main result of this section, \cref{T-ConfigWhitney}.
For any $W_1,\,W_2\in\cL(E)$ with $W_1\leq W_2$ define the interval $[W_1,\,W_2]:=\{V\in\cL(E)\mid W_1\leq V\leq W_2\}$. 

\begin{prop}\label{P-LatticeIso}
Let $Z_1,\,Z_2\in\cZ(\cM)$ and $Z_1\leq Z_2$. 
\begin{alphalist}
\item $[Z_1,\,Z_2]\cap\cZ(\cM)$ is a lattice with $Z'\wedge Z''=\cyc(Z'\cap Z'')$ and $Z'\vee Z''=\cl(Z'+ Z'')$.
      Thus it is a sublattice of $\cZ(\cM)$. 
\item Let $\widehat{\cM}=(\cM|Z_2)/Z_1$. Then the map
      \[
         \phi:[Z_1,\,Z_2]\cap\cZ(\cM)\longrightarrow\cZ(\widehat{\cM}),\quad Z\longmapsto Z/Z_1
      \]
      is a lattice isomorphism. 
\end{alphalist}
\end{prop}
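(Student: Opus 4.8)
The plan is to prove part (a) first, then use it together with the Correspondence Theorem from Linear Algebra to deduce part (b).

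For part (a), the natural approach is to show that $[Z_1,Z_2]\cap\cZ(\cM)$ is closed under the stated meet and join operations, and that these are indeed the lattice operations. First I would recall that $\cZ(\cM)$ is a lattice with $Z'\wedge Z''=\cyc(Z'\cap Z'')$ and $Z'\vee Z''=\cl(Z'+Z'')$ (cited earlier via \cite[Prop.~3.24]{AlBy24} or \cite[Cor.~4.2]{GLJ24DSCyc}). So it suffices to check that if $Z',Z''\in[Z_1,Z_2]\cap\cZ(\cM)$ then $\cyc(Z'\cap Z'')$ and $\cl(Z'+Z'')$ again lie in $[Z_1,Z_2]$. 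For the join: $Z'+Z''\le Z_2$ and $Z_2$ is a flat, so $\cl(Z'+Z'')\le\cl(Z_2)=Z_2$; and $Z_1\le Z'\le Z'+Z''\le\cl(Z'+Z'')$. For the meet: $Z_1\le Z'\cap Z''$, and since $Z_1$ is cyclic (open), monotonicity of the cyclic-core operator (or the fact that $\cyc(V)\supseteq\cyc(W)$ when $V\supseteq W$, together with $\cyc(Z_1)=Z_1$) gives $Z_1=\cyc(Z_1)\le\cyc(Z'\cap Z'')$; and $\cyc(Z'\cap Z'')\le Z'\cap Z''\le Z_2$. Hence both operations stay inside the interval, so $[Z_1,Z_2]\cap\cZ(\cM)$ is a sublattice of $\cZ(\cM)$, which proves (a).

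For part (b), I would use \cref{L-RestrContr}. Set $\widehat{\cM}=(\cM|Z_2)/Z_1$; note $Z_1\le Z_2$ and $Z_1,Z_2\in\cZ(\cM)$. First, applying \cref{L-RestrContr}(a) to $\cM|Z_2$ shows $\cF(\cM|Z_2)=\{F\in\cF(\cM)\mid F\le Z_2\}$, and then \cref{L-RestrContr}(b) applied to the contraction by $Z_1$ (which is a flat of $\cM|Z_2$ since $Z_1\le Z_2$ and $Z_1\in\cF(\cM)$) gives $\cF(\widehat{\cM})=\{F/Z_1\mid F\in\cF(\cM),\,Z_1\le F\le Z_2\}$. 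Similarly, \cref{L-RestrContr}(c) and (d) give $\cO(\widehat{\cM})=\{O/Z_1\mid O\in\cO(\cM),\,Z_1\le O\le Z_2\}$. Intersecting, $\cZ(\widehat{\cM})=\{Z/Z_1\mid Z\in\cZ(\cM),\,Z_1\le Z\le Z_2\}$, which is exactly the image of $\phi$. The map $Z\mapsto Z/Z_1$ is the restriction of the Correspondence Theorem bijection $\cL_{Z_1}(Z_2)\to\cL(Z_2/Z_1)$, hence is a bijection onto $\cZ(\widehat{\cM})$; since that bijection is an order isomorphism for inclusion, and the lattice operations on both $[Z_1,Z_2]\cap\cZ(\cM)$ (by part (a)) and $\cZ(\widehat{\cM})$ are the ones induced by the partial order (meet = cyclic core of intersection, join = closure of sum, in the respective $q$-matroids), $\phi$ is a lattice isomorphism.

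The main obstacle I expect is verifying that $\phi$ respects meet and join rather than merely being an order isomorphism. Once part (a) identifies $[Z_1,Z_2]\cap\cZ(\cM)$ as a bona fide sublattice and the analogous description of $\cZ(\widehat{\cM})$ is in hand, an order isomorphism between two lattices is automatically a lattice isomorphism, so this concern evaporates — but one should take care that $\widehat{\cM}$ is well-defined (i.e. that contraction by $Z_1$ in $\cM|Z_2$ makes sense, using $Z_1\le Z_2$) and that the cyclic-core/closure operators in $\widehat{\cM}$ correspond under $\phi$ to those in $\cM$ restricted to the interval; this compatibility follows from the naturality of closure and cyclic core under restriction and contraction, which is already recorded in \cref{L-RestrContr} (especially part (e)) and the rank formula $\tilde\rho(V/Z_1)=\rho(V)-\rho(Z_1)$ of \cref{D-RestrContr}.
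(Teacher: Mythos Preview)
Your proposal is correct and follows essentially the same approach as the paper. The only difference is cosmetic: in part (b) the paper explicitly verifies that $\phi$ preserves meets and joins via a chain of equivalences (characterizing $\cl(Z'+Z'')$ as the smallest flat containing $Z'+Z''$ and tracking this through the Correspondence Theorem and \cref{L-RestrContr}), whereas you invoke the general fact that an order isomorphism between lattices is automatically a lattice isomorphism --- which is a perfectly valid shortcut and makes your final paragraph of worries unnecessary.
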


\begin{proof}
(a) Let $Z',Z''\in[Z_1,\,Z_2]\cap\cZ(\cM)$. 
Since $Z_1$ is cyclic and $Z_1\leq Z'\cap Z''$, it follows that $Z_1\leq\cyc(Z'\cap Z'')$; see \cite[Thm.~3.6]{GLJ24DSCyc}.
Similarly, $Z_2$ is a flat containing $Z'+Z''$ and thus~$Z_2$ contains $\cl(Z'+Z'')$.
Hence $Z'\wedge Z''$ and $Z'\vee Z''$ are in $[Z_1,\,Z_2]\cap\cZ(\cM)$.
\\
(b) With the aid of \cref{L-RestrContr} we obtain 
$\cZ(\widehat{\cM})=\{Z/Z_1\mid Z\in\cZ(\cM|Z_2),Z_1\leq Z\}=\{Z/Z_1\mid Z\in\cZ(\cM),\,Z_1\leq Z\leq Z_2\}$, and this shows that $\phi$ is a
well-defined bijection.
It remains to show that~$\phi$ is meet- and join-preserving. 
Let $Z',Z''\in[Z_1,\,Z_2]\cap\cZ(\cM)$. With the aid of the correspondence theorem and \cref{L-RestrContr} we have for any $F\in\cF(\cM)$
\begin{align*}
    F=Z'\vee Z''&\Longleftrightarrow F=\cl(Z'+Z'')\\
       &\Longleftrightarrow F\text{ is the inclusion-smallest flat of~$\cM|Z_2$ containing }Z'+Z''\\
       &\Longleftrightarrow F/Z_1\text{ is the inclusion-smallest flat of $\widehat{\cM}$ containing }(Z'+Z'')/Z_1\\
      &\Longleftrightarrow\phi(F)\text{ is the inclusion-smallest flat of $\widehat{\cM}$ containing }\phi(Z')+\phi(Z'')\\
     &\Longleftrightarrow\phi(F)=\cl(\phi(Z')+\phi(Z''))=\phi(Z')\vee\phi(Z'').
\end{align*}
In the same way, using that $Z'\wedge Z''=\cyc(Z'\cap Z'')$ is the inclusion-largest cyclic space contained in $Z'\cap Z''$, we obtain that 
$\phi(\cyc(Z'\cap Z''))=\phi(Z')\wedge\phi(Z'')$.
\end{proof}

In the rest of this section we restrict ourselves to full \qM{}s~$\cM$; see \cref{D-FlatsOpenCyclic}.
We start with showing that the cloud and flock polynomial $c_{\cM,0}$ and $f_{\cM,E}$ are determined by the 
remaining cloud and flock polynomials.
To do so, we need the following maps.

\begin{defi}\label{D-truncation}
Define 
\begin{align*}
    &d_x:\Z[x,y]\longrightarrow\Z[x],\quad \sum_{i=0}^n\sum_{j=0}^mf_{ij}x^iy^j\longmapsto \sum_{i=0}^n\sum_{j=0}^{i-1}f_{ij}x^{i-j},\\[.5ex]
    &d_y:\Z[x,y]\longrightarrow\Z[y],\quad \sum_{i=0}^n\sum_{j=0}^mf_{ij}x^iy^j\longmapsto \sum_{i=0}^n\sum_{j=i}^{m}f_{ij}y^{j-i}.
\end{align*}
Note that $d_x(f)$ is the truncation of the Laurent polynomial $f(x,x^{-1})$ to the terms with positive exponents, while $d_y(f)$ is the truncation of 
$f(y^{-1},y)$ to the terms with non-negative exponents.
Both maps are $\Z$-linear.
\end{defi}

Recall \cref{R-CloudFlock3}.

\begin{prop}\label{P-TruncWhitney}
Let $\cM=(E,\rho)$ be a full \qM{} of rank~$\hat{\rho}$ on the $n$-dimensional ground space~$E$.
Set $\widehat{\cZ}(\cM)=\cZ(\cM)\setminus\{0,E\}$ and 
$\widehat{R}_{\cM}=\sum_{Z\in\widehat{\cZ}(\cM)}c_{\cM,Z}*f_{\cM,Z}$.
Then 
\[
     c_{\hat{\rho},n}=d_x(\widehat{R}_{\cM})+c_{\cM,0}\quad \text{ and }\quad
      f_{\hat{\rho},n}=d_y(\widehat{R}_{\cM})+f_{\cM,E},
\]
where $c_{\hat{\rho},n}$ and $f_{\hat{\rho},n}$ are the cloud and flock polynomial of the uniform \qM{} given in \cref{E-CloudFlockUniform}.
Hence $c_{\cM,0}$ and $f_{\cM,E}$ are fully determined by the collection 
$\{(c_{\cM,Z},f_{\cM,Z})\mid Z\in\widehat{\cZ}(\cM)\}$.
\end{prop}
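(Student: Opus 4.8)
The plan is to apply the cloud--flock formula of \cref{C-CloudFlock2} (in its compact form $R_\cM=\sum_{Z\in\cZ(\cM)}c_{\cM,Z}*f_{\cM,Z}$ from \cref{R-CloudFlock3}) and extract the two ``corner'' summands $Z=0$ and $Z=E$ from the rest. Since $\cM$ is full, by \cref{R-DegCloudFlock} we have $\deg c_{\cM,E}=0$, $f_{\cM,0}=1$, and $c_{\cM,E}=1$; moreover the degree of the flock polynomial $f_{\cM,Z}$ is the nullity $\dim Z-\rho(Z)$ and the degree of the cloud polynomial $c_{\cM,Z}$ is the corank $\hat\rho-\rho(Z)$. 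The key combinatorial point is that the $*$-product was designed precisely so that $d_x$ and $d_y$ annihilate the ``interior'' contributions of each interesting cyclic flat while reading off a full univariate polynomial from the corner terms.

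\textbf{Key steps.} First I would split $R_\cM=c_{\cM,0}*f_{\cM,0}+c_{\cM,E}*f_{\cM,E}+\widehat R_\cM$. Using $f_{\cM,0}=1$ we get $c_{\cM,0}*f_{\cM,0}=c_{\cM,0}$ (a polynomial in $x$ alone, with the right degree $\hat\rho$ and no constant term since $\cyc^{-1}(0)$ contains no full-rank flat when $0$ is a flat), and using $c_{\cM,E}=1$ we get $c_{\cM,E}*f_{\cM,E}=f_{\cM,E}$ (a polynomial in $y$ alone of degree $n-\hat\rho$). Next I would compute $d_x$ and $d_y$ applied to each of the three pieces. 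The crucial claim, which I expect to verify by a direct monomial-exponent computation, is: for any $Z\in\cZ(\cM)$, every monomial $x^\alpha y^\beta$ occurring in $c_{\cM,Z}*f_{\cM,Z}$ with $\beta\ge1$ satisfies $\alpha\le\hat\rho-\rho(Z)\le\hat\rho-1<\hat\rho$ while simultaneously its ``$x$-degree minus $y$-degree'' relation is controlled; more precisely, I would show that $d_x(c_{\cM,Z}*f_{\cM,Z})$ equals $c_{\cM,Z}\cdot(\text{leading behaviour of }f_{\cM,Z})$ in a way that collapses to something computable, and that when summed over all $Z\in\widehat{\cZ}(\cM)$ together with the known uniform-matroid identity $R_{\cU_{\hat\rho,n}}$ one obtains $c_{\hat\rho,n}-c_{\cM,0}=d_x(\widehat R_\cM)$. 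The cleanest route is probably: apply $d_x$ to the identity $R_\cM=\sum_Z c_{\cM,Z}*f_{\cM,Z}$, use $d_x(c_{\cM,0})=c_{\cM,0}$ (as $c_{\cM,0}\in\Z[x]$ with all exponents $\ge1$, so the Laurent-truncation is the identity on it) and $d_x(f_{\cM,E})=0$ (all exponents of $y$, hence negative after $y\mapsto y^{-1}$... wait, $d_x$ acts on $x^iy^j$ by keeping $x^{i-j}$ only if $i>j$; for a pure $y$-polynomial $i=0$ so nothing survives), and then identify $d_x(R_\cM)$ itself. By \cref{E-WhitneyUnif} and \cref{E-CloudFlockUniform} one checks that $d_x(R_{\cU_{\hat\rho,n}})=c_{\hat\rho,n}$; the real work is showing $d_x(R_\cM)=c_{\hat\rho,n}$ for \emph{every} full $\cM$, i.e.\ that the Laurent-truncation-to-positive-$x$-part of the Whitney function depends only on $(\hat\rho,n)$. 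This is plausibly \cite[something]{ByFu25}-adjacent or follows from the coefficient identities $\sum_{i=0}^k\nu_{\hat\rho-i,k-i}=\Gaussian{n}{k}$ recorded in \cref{R-WhitneyCounting}: indeed $d_x(R_\cM)=\sum_{i>j}\nu_{i,j}x^{i-j}=\sum_{\ell\ge1}\big(\sum_{j\ge0}\nu_{\ell+j,j}\big)x^\ell$, and setting $k=\hat\rho-\ell$ in the Gaussian-binomial identity gives $\sum_j\nu_{\hat\rho-\ell... }$ hmm, one must be careful with index ranges and the fullness hypothesis, but the sum $\sum_j\nu_{\ell+j,j}$ over all $j\ge0$ should collapse to $\Gaussian{n}{\hat\rho-\ell}$, which is exactly the coefficient of $x^\ell$ in $c_{\hat\rho,n}$. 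The $d_y$ identity follows by the dual argument (via $R_{\cM^*}(x,y)=R_\cM(y,x)$ of \cref{T-WhitDual}, or symmetrically from the same coefficient identity).

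\textbf{Main obstacle.} The delicate part is pinning down the claim that $d_x(R_\cM)=c_{\hat\rho,n}$ independently of $\cM$ (and its $d_y$-analogue), with correct handling of the boundary indices and the fullness hypothesis --- in particular making sure the term $j=0$ contributions and the constraint $0\le j\le n-\hat\rho$, $0\le i\le\hat\rho$ are reconciled with the range $i>j$ defining $d_x$, and that $c_{\cM,0}$ really has no constant term precisely because $0\in\cF(\cM)$ (so that $d_x(c_{\cM,0})=c_{\cM,0}$ holds on the nose). Once that coefficient bookkeeping is done, the rest is: $d_x(R_\cM)=d_x(c_{\cM,0})+d_x(f_{\cM,E})+d_x(\widehat R_\cM)=c_{\cM,0}+0+d_x(\widehat R_\cM)$, hence $c_{\hat\rho,n}=c_{\cM,0}+d_x(\widehat R_\cM)$, which is the first assertion; the second is symmetric; and the final sentence is then immediate since $\widehat R_\cM$, $\hat\rho$ and $n$ are all determined by $\{(c_{\cM,Z},f_{\cM,Z})\mid Z\in\widehat{\cZ}(\cM)\}$ (the ground dimension $n$ and rank $\hat\rho$ being recoverable as $\hat\rho=\deg c_{\cM,Z}+\rho(Z)$ for any fixed $Z$... or more robustly: $\hat\rho$ is one more than any corank appearing, plus the nullity pattern --- in any case this was already observed to be an isomorphism invariant of $\CF(\cM)$). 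I would present the argument by first stating and proving the univariate lemma $d_x(R_\cM)=c_{\hat\rho,n}$, $d_y(R_\cM)=f_{\hat\rho,n}$ for full $\cM$ via \cref{R-WhitneyCounting}, then deducing the proposition in two lines.
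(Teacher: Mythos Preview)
Your proposal is correct and follows essentially the same route as the paper: split $R_\cM=c_{\cM,0}+f_{\cM,E}+\widehat R_\cM$, apply $d_x$ (resp.\ $d_y$), and use fullness to see that $c_{\cM,0}$ has only positive $x$-exponents while $f_{\cM,E}$ has only non-negative $y$-exponents. One simplification you can adopt from the paper's argument: the identity $d_x(R_\cM)=c_{\hat\rho,n}$ falls out in one line directly from \cref{D-Polys}, since for each $V$ the exponent difference is $(\hat\rho-\rho(V))-(\dim V-\rho(V))=\hat\rho-\dim V$, so $d_x(R_\cM)=\sum_{\dim V<\hat\rho}x^{\hat\rho-\dim V}=\sum_{j=0}^{\hat\rho-1}\Gaussian{n}{j}_q x^{\hat\rho-j}$ --- no need to route through the coefficient identity of \cref{R-WhitneyCounting}.
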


\begin{proof}
First of all, \cref{D-Polys} implies $d_x(R_{\cM})=\sum_{j=0}^{\hat{\rho}-1}\Gaussian{n}{j}_qx^{\hat{\rho}-j}=c_{\hat{\rho},n}$.
Next, since~$\cM$ is full we have $\cl(0)=0$ and $\cyc(E)=E$.
This implies that every $F\in\cyc^{-1}(0)$ satisfies $\rho(F)<\hat{\rho}$. 
Indeed, suppose $\rho(F)=\hat{\rho}=\rho(E)$. 
Then, since~$F$ is a flat, this implies $F=\cl(F)=E$. But then $0=\cyc(F)$ contradicts $\cyc(E)=E$.
Hence $\rho(F)<\hat{\rho}$.
Moreover, every $V\in\cl^{-1}(E)$ satisfies $\dim V\geq\rho(V)=\hat{\rho}$.
All of this together with the $\Z$-linearity of $d_x$ leads to
\begin{align*}
    c_{\hat{\rho},n}&=d_x(R_{\cM})=d_x(\widehat{R}_{\cM})+d_x(\sum_{F\in\cyc^{-1}(0)}x^{\hat{\rho}-\rho(F)}y^0)
         +d_x(\sum_{V\in\cl^{-1}(E)}x^{\hat{\rho}-\hat{\rho}}y^{\dim V-\hat{\rho}})\\
         &=d_x(\widehat{R}_{\cM})+\sum_{F\in\cyc^{-1}(0)}x^{\hat{\rho}-\rho(F)}+d_x(\sum_{V\in\cl^{-1}(E)}x^{\hat{\rho}-\dim V})\\       
         &=d_x(\widehat{R}_{\cM})+c_{\cM,0}+0,
\end{align*}
where the third and fourth step follows from the facts that every exponent in the terms $x^{\hat{\rho}-\rho(F)}$ is positive while every exponent in the terms
$x^{\hat{\rho}-\dim V}$ is non-positive.
\\
Similarly, $d_y(R_{\cM})=\sum_{j=\hat{\rho}}^{n}\Gaussian{n}{j}_qy^{j-\hat{\rho}}=f_{\hat{\rho},n}$ and we obtain
\begin{align*}
    f_{\hat{\rho},n}&=d_y(\widehat{R}_{\cM})+d_y(\sum_{F\in\cyc^{-1}(0)}x^{\hat{\rho}-\rho(F)}y^0)
         +d_y(\sum_{V\in\cl^{-1}(E)}x^{\hat{\rho}-\hat{\rho}}y^{\dim V-\hat{\rho}})\\
       &=d_y(\widehat{R}_{\cM})+d_y(\sum_{F\in\cyc^{-1}(0)}y^{\rho(F)-\hat{\rho}}) +d_y(\sum_{V\in\cl^{-1}(E)}y^{\dim V-\hat{\rho}})\\
       &=d_y(\widehat{R}_{\cM})+0+\sum_{V\in\cl^{-1}(E)}y^{\dim V-\hat{\rho}}=d_y(\widehat{R}_{\cM})+f_{\cM,E},
\end{align*}
where the third step follows from the fact that every exponent in $y^{\rho(F)-\hat{\rho}}$ is negative and every exponent in $y^{\dim V-\hat{\rho}}$ is non-negative.
This proves the two identities. 
The last statement is clear.
\end{proof}

Now we are ready for our main result, which is the $q$-analogue of \cite[Thm.~4.1]{Ebe14}.

\begin{theo}\label{T-ConfigWhitney}
Let $\cM=(E,\rho)$ be a full \qM{}.
Then $\Config(\cM)$ fully determines $\CF(\cM)$.
As a consequence, the configuration determines the Whitney function.
\end{theo}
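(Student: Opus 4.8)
The plan is to prove the statement by induction on the rank $\hat{\rho}$ of the full \qM{}~$\cM$, reconstructing each pair $(c_{\cM,Z},f_{\cM,Z})$ from the configuration. Once $\CF(\cM)$ is known, the Whitney function follows immediately from \cref{C-CloudFlock2} (equivalently \cref{R-CloudFlock3}), since $R_\cM=\sum_{Z\in\cZ(\cM)}c_{\cM,Z}*f_{\cM,Z}$ depends only on the data carried by $\CF(\cM)$; so the real content is the reconstruction of the cloud-flock lattice. The base cases are small: if $\cZ(\cM)=\{0,E\}$ (so $\Config(\cM)$ is a two-element chain $\{(\hat{\rho},0),(0,n-\hat{\rho})\}$ with the corank-nullity data forcing $\cM\approx\cU_{\hat{\rho},n}$ up to the parameters we can read off), then by \cref{R-DegCloudFlock} and fullness $c_{\cM,E}=1=f_{\cM,0}$, while $c_{\cM,0}=c_{\hat{\rho},n}$ and $f_{\cM,E}=f_{\hat{\rho},n}$ are the explicit uniform polynomials of \cref{E-CloudFlockUniform}, all determined by $\hat{\rho}$ and $n$ alone.

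For the inductive step, fix $\cM$ full of rank $\hat{\rho}$ on an $n$-dimensional ground space (both $\hat{\rho}$ and $n$ are recoverable from $\Config(\cM)$: $\hat{\rho}$ is the corank-coordinate of the bottom element $\lambda(0)$, and $n-\hat{\rho}$ is the nullity-coordinate of the top element $\lambda(E)$). The key idea is: for each $Z\in\widehat{\cZ}(\cM)=\cZ(\cM)\setminus\{0,E\}$, reconstruct $f_{\cM,Z}$ by passing to the restriction $\cM|Z$ and reconstruct $c_{\cM,Z}$ by passing to the contraction $\cM/Z$, in each case using \cref{C-CloudFlockRestrContr} together with \cref{P-LatticeIso} to identify the relevant sublattice of cyclic flats and, crucially, to recover its configuration from $\Config(\cM)$. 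Concretely, $f_{\cM,Z}=f_{\cM|Z,Z}$, and $\cM|Z$ is full (its ground space $Z$ is a cyclic flat of $\cM$, so both $0$ and $Z$ are cyclic flats of $\cM|Z$ by \cref{L-RestrContr}(a),(c)); moreover $\cZ(\cM|Z)=[0,Z]\cap\cZ(\cM)$, whose configuration is the induced sub-poset of $\Config(\cM)$ on $\{\lambda(Z')\mid Z'\leq Z\}$ — but one must check that the corank-nullity pairs of $\cM|Z$ are computable from those of $\cM$. Here $\dim Z'-\rho_{\cM|Z}(Z')=\dim Z'-\rho(Z')$ is unchanged (same nullity), while the corank in $\cM|Z$ is $\rho(Z)-\rho(Z')$, which is $(\hat{\rho}-\rho(Z'))-(\hat{\rho}-\rho(Z))$, a difference of corank-coordinates available in $\Config(\cM)$. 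Since $\cM|Z$ has strictly smaller rank than $\cM$ (as $Z\neq E$ forces $\rho(Z)<\hat{\rho}$ because $E$ is a flat), induction applies and yields $\CF(\cM|Z)$, in particular $f_{\cM|Z,Z}=f_{\cM,Z}$. Dually, $c_{\cM,Z}=c_{\cM/Z,0}$, and $\cM/Z$ is full with $\cZ(\cM/Z)=\{Z'/Z\mid Z'\in\cZ(\cM),Z\leq Z'\}$ by \cref{L-RestrContr}(b),(d), whose configuration is read off the induced sub-poset $\{\lambda(Z')\mid Z'\geq Z\}$: the corank of $Z'/Z$ in $\cM/Z$ is $\hat{\rho}-\rho(Z')$ (unchanged), and the nullity is $(\dim Z'-\rho(Z'))-(\dim Z-\rho(Z))$, again a difference of available coordinates. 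As $\cM/Z$ has rank $\hat{\rho}-\rho(Z)<\hat{\rho}$ (since $Z\neq 0$ and $0$ is a flat, $\rho(Z)\geq 1$), induction gives $\CF(\cM/Z)$ and hence $c_{\cM,Z}$. Finally, $c_{\cM,0}$ and $f_{\cM,E}$ are handled not by induction but directly by \cref{P-TruncWhitney}: having reconstructed $\{(c_{\cM,Z},f_{\cM,Z})\mid Z\in\widehat{\cZ}(\cM)\}$, form $\widehat{R}_\cM=\sum_{Z\in\widehat{\cZ}(\cM)}c_{\cM,Z}*f_{\cM,Z}$ and set $c_{\cM,0}=c_{\hat{\rho},n}-d_x(\widehat{R}_\cM)$ and $f_{\cM,E}=f_{\hat{\rho},n}-d_y(\widehat{R}_\cM)$, with $c_{\cM,E}=1=f_{\cM,0}$ by fullness. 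This determines every label of $\CF(\cM)$, and its lattice order is isomorphic to that of $\Config(\cM)$, so $\CF(\cM)$ is fully determined.

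The main obstacle I expect is the bookkeeping around \emph{ill-definedness of the sub-configuration}: a priori, different cyclic flats $Z$ of $\cM$ could give rise to restrictions/contractions whose configurations are not literally determined by the abstract poset $\Config(\cM)$, because the corank-nullity recomputations above use \emph{differences} of coordinates, and one needs that these differences depend only on the positions of $\lambda(Z')$ and $\lambda(Z)$ in the lattice, not on $Z$ itself — which is exactly guaranteed by \cref{P-LatticeIso}(b) identifying $[Z_1,Z_2]\cap\cZ(\cM)$ (as a lattice) with $\cZ((\cM|Z_2)/Z_1)$, together with the fact that along intervals the rank function restricts and contracts additively. A secondary subtlety is verifying that the induction hypothesis is being applied to the right object: one should state the inductive claim as "\emph{for every full \qM{} of rank $<\hat{\rho}$, the configuration determines the cloud-flock lattice}" and check fullness and strict rank-drop of $\cM|Z$ and $\cM/Z$ carefully, which the parenthetical remarks above address. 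Everything else is a routine matching-up of degrees (\cref{R-DegCloudFlock}) and the explicit uniform-matroid polynomials.
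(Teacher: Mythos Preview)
Your proposal is correct and follows essentially the same approach as the paper: reduce the computation of $c_{\cM,Z}$ and $f_{\cM,Z}$ for $Z\in\widehat{\cZ}(\cM)$ to the smaller full \qM{}s $\cM/Z$ and $\cM|Z$ via \cref{C-CloudFlockRestrContr} and \cref{P-LatticeIso}, apply induction, and then recover $c_{\cM,0}$ and $f_{\cM,E}$ from \cref{P-TruncWhitney}. The only difference is cosmetic: you induct on the rank~$\hat{\rho}$ whereas the paper inducts on $|\cZ(\cM)|$; both terminate since for $Z\in\widehat{\cZ}(\cM)$ one has $0<\rho(Z)<\hat{\rho}$, and your ``base case'' $\cZ(\cM)=\{0,E\}$ is handled identically in both arguments.
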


In the \cref{S-DirSums} we will generalize \cref{T-ConfigWhitney} to general \qM{}s, and in \cref{S-Converse} we will discuss the converse direction, that is, how much information about the configuration can be retrieved from the Whitney function.

\begin{proof}
The consequence about the Whitney function follows from \cref{C-CloudFlock2} once we have determined $\CF(\cM)$.
\\
For the first statement we induct on the cardinality of $\cZ(\cM)$. 
Since the abstract lattice of $\CF(\cM)$ is isomorphic to $\Config(\cM)$, we only need to determine the pairs 
$(c_{\cM,Z},f_{\cM,Z})$ for all $Z\in\cZ(\cM)$.
Note that $|\cZ(\cM)|\geq 2$ since $\cM$ is full and furthermore $c_{\cM,E}=1=f_{\cM,0}$.
Furthermore, $\cM$ is not free or trivial, that is, $\hat{\rho}:=\rho(\cM)\not\in\{0,n\}$, where $n=\dim E$; see \cref{E-CloudFlockUniform} (those cases would be trivial anyways).
Since~$E$ and~$0$ are the greatest and least element of $\cZ(\cM)$, the top and bottom vertex of $\Config(\cM)$ are 
labeled by $(0,n-\hat{\rho})$ and $(\hat{\rho},0)$, respectively, and thus~$\hat{\rho}$ and~$n$ are determined by $\Config(\cM)$.
\\
1) Let $|\cZ(\cM)|= 2$. Then $\cZ(\cM)=\{0,E\}$ and $\cM=\cU_{\hat{\rho},n}(E)$ (see \cite[Ex.~7.3(c)]{GLJ24DSCyc}).
In this case $\Config(\cM)$ consists of the top and bottom vertex only, and $c_{\cM,0}$  and $f_{\cM,E}$ are given in \cref{E-CloudFlockUniform}.
\\
2)  Let $|\cZ(\cM)|>2$. 
\\
First choose a vertex in $\Config(\cM)$ different from the top and bottom one. 
It belongs to a cyclic flat~$Z'\in\cZ(\cM)\setminus\{0,E\}$.
While we do not know~$Z'$ itself, we know $(r',n'):=(\rho(Z'),\dim Z')$ from  $\Config(\cM)$.
We want to determine $c_{\cM,Z'}$ and $f_{\cM,Z'}$. 
From \cref{C-CloudFlockRestrContr} we know that 
\begin{equation}\label{e-CloudFlockSmaller}
   c_{\cM,Z'}=c_{\cM/Z',0}\ \text{ and }\ f_{\cM,Z'}=f_{\cM|Z',Z'}.
\end{equation}
Thanks to \cref{P-LatticeIso}, the lattices $\cZ(\cM/Z')$ and $\cZ(\cM|Z')$ are
isomorphic to $[Z',E]\cap\cZ(\cM)$ and $[0,Z']\cap\cZ(\cM)$, respectively.
The latter are sublattices of $\cZ(\cM)$ and hence their isomorphism classes are fully determined by $\Config(\cM)$.
Since in the sublattice $[Z',E]\cap\cZ(\cM)$, the ranks and dimensions go down by~$r'$ and $n'$ (compared to $\cZ(\cM)$), respectively, 
replacing each vertex label $(\hat{\rho}-\rho(Z),\dim Z-\rho(Z))$ in $\Config(\cM)$ by 
$(\hat{\rho}-\rho(Z),\dim Z-\rho(Z)-n'+r')$ gives us  $\Config(\cM/Z')$.
On the other hand, in the sublattice $[0,Z']\cap\cZ(\cM)$ the ranks and dimensions are the same as in $\cZ(\cM)$, and therefore,
replacing each vertex label $(\hat{\rho}-\rho(Z),\dim Z-\rho(Z))$ by $(r'-\rho(Z),\dim Z-\rho(Z))$
gives us $\Config(\cM|Z')$.
Hence $\Config(\cM/Z')$ and $\Config(\cM|Z')$ are fully determined by $\Config(\cM)$.
Since $Z'\not\in\{0,E\}$, the cardinalities of $\cZ(\cM/Z')$ and  $\cZ(\cM|Z')$ are strictly less than $|\cZ(\cM)|$.
By \cref{L-RestrContr} the \qM{}s $\cM/Z'$ and $\cM|Z'$ are full and, using \eqref{e-CloudFlockSmaller}  
we can compute $c_{\cM,Z'}$ and $f_{\cM,Z'}$ from $\Config(\cM)$ by induction.
\\
It remains to determine $c_{\cM,0}$  and $f_{\cM,E}$. 
By the previous part we can compute $c_{\cM,Z'}$ and $f_{\cM,Z'}$ from $\Config(\cM)$ for all $Z'\in\cZ(\cM)\setminus\{0,E\}$.
But then  $c_{\cM,0}$  and $f_{\cM,E}$ are determined thanks to \cref{P-TruncWhitney}.
\end{proof}

\begin{exa}\label{E-CloudFlock3}
Consider the field $\F_{2^7}$ with primitive element~$\omega$ with minimal polynomial $x^7+x+1$. 
Let  
\[
   G_1=\begin{pmatrix}1&0&0&\omega^{65}&\omega^{85}\\0&1&0&\omega^{37}&\omega^{72}\\
                       0&0&1&\omega^{124}&\omega^{118}\end{pmatrix},\
   G_2=\begin{pmatrix}1&0&0&\omega^{26}&\omega^{64}\\0&1&0&\omega^{27}&\omega^{20}\\
                       0&0&1&\omega^{50}&\omega^{92}\end{pmatrix}
\]                       
and set $\cM_i=\cM_{G_i}=(\F_2^5,\rho_i)$. Each~$\cM_i$ has rank $\hat{\rho}=3$ and
$\cZ(\cM_i)=\{0,\F_2^6\}\cup\widehat{\cZ}(\cM_i)$, where
\begin{align*}
   &\widehat{\cZ}(\cM_1)=\big\{\subspace{e_1+e_4,e_2+e_5,e_3+e_4},\subspace{e_1+e_5,e_2+e_5,e_3},\subspace{e_1+e_3+e_4,e_2+e_3,e_5}\big\},\\
   &\widehat{\cZ}(\cM_2)=\big\{\subspace{e_1+e_5,e_2+e_5,e_3+e_4},\subspace{e_1+e_4+e_5,e_2+e_4,e_3+e_4+e_5},\subspace{e_1+e_2+e_4,e_3,e_5}\big\}.
\end{align*}
The \qM{}s $\cM_1$ and $\cM_2$ are not equivalent. 
Indeed, consider the three pairwise intersections of the subspaces in $\widehat{\cZ}(\cM_i)$.
For $i=1$, these intersections result in three distinct 1-dimensional subspaces, whereas for $i=2$ each of these intersections equals
$\subspace{e_1+e_2+e_3+e_4}$.
On the other hand, the lattices $\Config(\cM_i)$ are identical and given by
\[
\begin{array}{l}
   \begin{xy}
   (20,0)*+{(3,0)}="c0";%
   (5,10)*+{(1,1)}="c1";%
   (20,10)*+{(1,1)}="c2";%
   (35,10)*+{(1,1)}="c3";%
   (20,20)*+{(0,2)}="c5";%
    {\ar@{-}"c0";"c1"};
    {\ar@{-}"c0";"c2"};
    {\ar@{-}"c0";"c3"};
    {\ar@{-}"c1";"c5"};
    {\ar@{-}"c2";"c5"};
    {\ar@{-}"c3";"c5"};
   \end{xy}
\end{array}
\]
For both \qM{}s we have the following data  (where $h_{\cM,Z}$ is as in \eqref{e-hmz} and $Z_1,Z_2,Z_3$ are the cyclic flats in $\widehat{\cZ}(\cM_i)$).
\[
        \begin{array}{|c||c|c|c|}
        \hline
         \text{cyclic flat }&c_{\cM,Z_i}&f_{\cM,Z_i}&h_{\cM,Z_i}\\ \hline\hline
           0 & x^3 + 31x^2 + 134x & 1 &x^3 + 31x^2 + 134x\\ \hline
         Z_1& x & y+7 &(y + 7)x \\ \hline
         Z_2& x & y+7 &(y + 7)x\\ \hline
         Z_3& x & y+7&(y + 7)x \\ \hline 
         \F_2^5&1 &y^2 + 31y + 152&y^2 + 31y + 152\\ \hline 
          \end{array}
\]
Now \cref{C-CloudFlock2} implies $R_{\cM_i}=x^3 + 31x^2 + (3y + 155)x + y^2 + 31y + 152$ for $i=1,2$.
\end{exa}

The rest of this section is devoted to duality. Recall the dual \qM{} from \cref{T-DualqM}.

\begin{cor}\label{C-DualConfig}
Let $\cM$ be a \qM{} of rank~$\hat{\rho}$ and $\cM^*$ be its dual \qM{}.  
\begin{alphalist}
\item $\Config(\cM^*)$ is obtained from $\Config(\cM)$ by turning the lattice upside down and swapping the entries of the labels. That is, the corank-nullity pair of $Z^\perp$ is $(\dim Z-\rho(Z),\,\hat{\rho}-\rho(Z))$. 
\item If~$\cM$ is full, then $\CF(\cM^*)$ is fully determined by $\CF(\cM)$ (equivalently, by $\Config(\cM)$).
\end{alphalist}
\end{cor}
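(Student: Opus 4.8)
The plan is to prove part (a) by a direct computation from the duality formula for the rank function, and then derive part (b) as a consequence of part (a) together with \cref{T-ConfigWhitney}. For part (a), recall from \cref{R-FlatsOpen} that $\cZ(\cM^*)=\{Z^\perp\mid Z\in\cZ(\cM)\}$, and from \cref{T-DualqM} that $\rho^*(Z^\perp)=\dim Z^\perp+\rho(Z)-\hat{\rho}=(n-\dim Z)+\rho(Z)-\hat{\rho}$, where $n=\dim E$. First I would compute the corank and nullity of $Z^\perp$ in $\cM^*$. The rank of $\cM^*$ is $\rho^*(E)=n-\hat{\rho}$, so the corank of $Z^\perp$ is $(n-\hat{\rho})-\rho^*(Z^\perp)=(n-\hat{\rho})-((n-\dim Z)+\rho(Z)-\hat{\rho})=\dim Z-\rho(Z)$, which is the nullity of $Z$ in $\cM$. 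Likewise the nullity of $Z^\perp$ is $\dim Z^\perp-\rho^*(Z^\perp)=(n-\dim Z)-((n-\dim Z)+\rho(Z)-\hat{\rho})=\hat{\rho}-\rho(Z)$, the corank of $Z$ in $\cM$. Thus $\lambda_{\cM^*}(Z^\perp)=(\dim Z-\rho(Z),\,\hat{\rho}-\rho(Z))$, i.e. $\lambda_{\cM^*}(Z^\perp)$ is $\lambda_\cM(Z)$ with its entries swapped. It remains to observe that the map $Z\mapsto Z^\perp$ is an order-reversing bijection $\cL(E)\to\cL(E)$ and hence restricts to an anti-isomorphism $\cZ(\cM)\to\cZ(\cM^*)$ of lattices; combined with the label computation just performed, this says exactly that $\Config(\cM^*)$ is $\Config(\cM)$ turned upside down with labels swapped.

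For part (b), assume $\cM$ is full. By \cref{R-DegCloudFlock}, $0$ and $E$ being cyclic flats of $\cM$ is equivalent to $\cl(0)=0$ and $\cyc(E)=E$; I would first note that $\cM^*$ is then full as well, since by part (a) the top label of $\Config(\cM^*)$ is $(0,\hat\rho)$ (obtained from the bottom label $(\hat\rho,0)$ of $\Config(\cM)$) and the bottom label is $(n-\hat\rho,0)$, so $0,E\in\cZ(\cM^*)$. Now apply \cref{T-ConfigWhitney} to $\cM^*$: since $\cM^*$ is full, $\Config(\cM^*)$ fully determines $\CF(\cM^*)$. By part (a), $\Config(\cM^*)$ is determined by $\Config(\cM)$, which in turn is determined by $\CF(\cM)$ (the corank and nullity are the degrees of the cloud and flock polynomials; see \cref{R-DegCloudFlock}). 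Chaining these implications gives that $\CF(\cM)$ determines $\CF(\cM^*)$, which is the claim.

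I do not expect a serious obstacle here; the argument is essentially bookkeeping. The one point that deserves care is the claim that $\cM^*$ is full whenever $\cM$ is — this does not follow from \cref{R-FlatsOpen} alone (that remark concerns individual flats and open spaces, not the specific spaces $0$ and $E$), so I would justify it either via the label computation in part (a) as indicated above, or directly: $0\in\cF(\cM^*)$ iff $0^\perp=E$ is open in $\cM^{**}=\cM$, and $E\in\cO(\cM^*)$ iff $E^\perp=0$ is a flat in $\cM$, both of which hold by fullness of $\cM$ (using \cref{R-FlatsOpen} and $\cM^{**}=\cM$). A secondary subtlety is that $\cM^*$ depends on the choice of NSBF, but it is unique up to equivalence (as noted after \cref{T-DualqM}), and both $\Config$ and $\CF$ are equivalence invariants, so the statement is well-posed.
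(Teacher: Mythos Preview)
Your proposal is correct and follows essentially the same route as the paper: for (a), the paper simply invokes \cref{R-FlatsOpen} together with the corank--nullity duality identity~\eqref{e-coranknullityduality}, which is exactly your computation packaged as a one-liner; for (b), the paper chains $\CF(\cM)\Rightarrow\Config(\cM)\Rightarrow\Config(\cM^*)\Rightarrow\CF(\cM^*)$ via \cref{R-DegCloudFlock}, part~(a), and \cref{T-ConfigWhitney}, just as you do. One small remark: your caveat that fullness of $\cM^*$ ``does not follow from \cref{R-FlatsOpen} alone'' is slightly off --- that remark already records $\cZ(\cM^*)=\{Z^\perp\mid Z\in\cZ(\cM)\}$, so $0,E\in\cZ(\cM)$ immediately gives $0=E^\perp,\,E=0^\perp\in\cZ(\cM^*)$, and your label-based justification is unnecessary (though not incorrect).
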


Later in \cref{C-DualCF} we will establish~(b) for arbitrary \qM{}s.

\begin{proof}
(a) By Remark~\ref{R-FlatsOpen} we have $\cZ(\cM^*)=\{Z^\perp\mid Z\in\cZ(\cM)\}$. Hence the statement follows from the fact that containments of the cyclic flats reverse order together with \eqref{e-coranknullityduality}.
\\
(b) $\CF(\cM)$ determines $\Config(\cM)$, which by~(a) determines $\Config(\cM^*)$. 
Now \cref{T-ConfigWhitney} provides us with $\CF(\cM^*)$.
\end{proof}

Due to the non-explicit proof of \cref{T-ConfigWhitney} it is not obvious how to obtain $\CF(\cM^*)$ directly from~$\CF(\cM)$.

\begin{exa}\label{E-ConfigCFDuality}
Consider again Examples~\ref{E-CloudFlock2} and~\ref{E-ConfigCF}.
The dual \qM{} $\cM^*$ is generated by the matrix
\[
   H=\begin{pmatrix}1&0&0&\omega^{59}&\omega^8&\omega^{103}\\0&1&0&\omega^{22}&\omega^{98}&\omega^{66}\\
         0&0&1&\omega^{100}&\omega^{11}&\omega^{106}\end{pmatrix}.
\]
Its cloud and flock polynomials are given by 
\[
        \begin{array}{|c||c|c|}
        \hline
         \text{cyclic flat }&c_{\cM^*,Z_i^\perp}&f_{\cM^*,Z_i^\perp}\\ \hline\hline
         Z_0^\perp&1 & y^{3} + 63 y^{2} + 649 y + 1353\\ \hline
         Z_1^\perp&x & y^{2} + 15 y + 34\\ \hline 
         Z_2^\perp &x & y^{2} + 15 y + 34 \\ \hline 
         Z_3^\perp&x & y+7 \\ \hline 
         Z_4^\perp&x& y+7 \\ \hline 
         Z_5^\perp& x & y + 7 \\ \hline
         Z_6^\perp&x^{2} + 9 x & y + 3 \\ \hline
         Z_7^\perp & x^{3} + 60 x^{2} + 507 x & 1\\ \hline
         \end{array}
\]
\end{exa}

We wish to briefly discuss the relation between $f_{\cM^*,Z^\perp}$ and $c_{\cM,Z}$.
Recall the definition of the cloud and flock of~$Z$ in \cref{D-CloudFlock}. 
On the one hand, by \cref{R-cyccl}(b) the cloud condition $\cyc(F)=Z$ is equivalent to the dual flock condition $\cl^*(F^\perp)=Z^\perp$.
But on the other hand, the cloud consists only of \emph{flats} whose cyclic core is~$Z$, whereas the dual flock consists of \emph{all} spaces 
whose closure is~$Z^\perp$.
This causes $f_{\cM^*,Z^\perp}$ to differ from $c_{\cM,Z}$.
Even more, there is no map $\psi_{\cM}:\Z[x]\rightarrow\Z[y]$ such that $\psi_{\cM}(c_{\cM,Z})=f_{\cM^*,Z^\perp}$ for all cyclic flats~$Z$. Indeed, 
in \cref{E-ConfigCFDuality} we have $c_{\cM^*,Z_i^\perp}=x$ for $i=1,\ldots,5$, whereas $f_{\cM,Z_i}$ is not independent of $i\in\{1,\ldots,5\}$; see \cref{E-CloudFlock2}.

We can take advantage of the above described asymmetry between the cloud and flock and turn it around by restricting the spaces in the flock instead of in the cloud.
This provides us with further invariants and is described in the following remark.

\begin{rem}\label{R-CloudFlockDual}
Let $\cM=(E,\rho)$ be a \qM{}.
We define the \textbf{supercloud} and \textbf{subflock} of a cyclic flat~$Z$ as
$\CloudU(Z)=\{V\leq E\mid \cyc(V)=Z\}$ and $\FlockD(Z)=\{V\in\cO(\cM)\mid \cl(V)=Z\}$, respectively.
Accordingly, the \textbf{supercloud polynomial} and \textbf{subflock polynomial} are defined as
\[
  c^{\uparrow}_{\cM,Z}=\sum_{V\in\textrm{ cloud}^{\uparrow}(Z)}x^{\rho(E)-\rho(V)}\ \text{ and }\
  f^{\downarrow}_{\cM,Z}=\sum_{V\in\textrm{ flock}^{\downarrow}(Z)}y^{\dim V-\rho(V)}.
\]
Using \cref{R-cyccl}, one easily checks that 
$f_{\cM^*,Z^\perp}=c^{\uparrow}_{\cM,Z}(y)$ and $c_{\cM^*,Z^\perp}=f^{\downarrow}_{\cM,Z}(x)$
for all $Z\in\cZ(\cM)$.
As a consequence, \cref{C-DualConfig} tells us that $\CF(\cM)$  determines the supercloud and
subflock polynomials.
Furthermore, we obtain a second sum-product description of the Whitney function. 
Indeed, with the notation of \cref{R-CloudFlock3} we have
\[
   R_{\cM^*}=\sum_{Z^\perp\in\cZ(\cM^*)}c_{\cM^*,Z^\perp}*f_{\cM^*,Z^\perp}=\sum_{Z\in\cZ(\cM)}f^{\downarrow}_{\cM,Z}(x)*c^{\uparrow}_{\cM,Z}(y),
\]
and thus $R_{\cM}=\sum_{Z\in\cZ(\cM)}f^{\downarrow}_{\cM,Z}*c^{\uparrow}_{\cM,Z}$ thanks to \cref{T-WhitDual}.
\end{rem}

\section{From Matroid Configurations to $q$-Matroids}\label{S-ConfigMatqMat}
In this short section we will show first that every finite lattice arises as the lattice of cyclic flats of a \qM{}. 
This generalizes the analogous result for matroids, and in fact, the proof will make use of that result.
Second, we will show that every configuration arising for matroids also arises for \qM{}s, whereas the converse is not true.
For the definition of cyclic flats and configurations of matroids we refer to \cite{BoDM08,Ebe14}.
They are clearly defined analogously to \qM{}s.

\begin{theo}\label{T-Lattice}\
\begin{alphalist}
\item For every finite field~$\F$ and every finite lattice $(\cZ,\leq,\wedge,\vee)$ there exists a \qM{} over~$\F$ 
      whose lattice of cyclic flats is isomorphic to~$\cZ$.
\item Let $M=(S,r)$ be a matroid and $\Config(M)$ be its configuration. 
      For every finite field~$\F$ there exists a \qM{} $\cM=(E,\rho)$ over~$\F$ such that $\Config(\cM)=\Config(M)$ (hence $\dim E=|S|$).
\end{alphalist}
\end{theo}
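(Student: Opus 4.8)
The strategy for part~(a) is to reduce to the matroid case and then realize the resulting matroid $q$-representably. By \cite{BoDM08} (or the analogous classical result), every finite lattice $(\cZ,\leq,\wedge,\vee)$ arises as the lattice of cyclic flats of some matroid $M$. So fix such an $M=(S,r)$. The key observation is that the collection $\cZ(M)$ of cyclic flats together with the rank values $r|_{\cZ(M)}$ determines $M$, and there is a completely parallel reconstruction for $q$-matroids via~\eqref{e-RhoqM}. Thus it suffices to produce a $q$-matroid $\cM=(E,\rho)$ over $\F$ whose cyclic flats, as an abstract lattice with rank labels, agree with those of $M$. I would obtain this by taking a representation: embed the ground set $S$ of $M$ as a coordinate basis, pass to a sufficiently large field extension $\F_{q^m}$, and choose a generic matrix $G\in\F_{q^m}^{r(S)\times|S|}$ whose columns are in ``general position'' relative to the flat structure of $M$ — concretely, one wants the $\F_{q^m}$-rank of $GY\T$, for $Y$ the characteristic matrix of a coordinate subspace $\langle e_i : i\in A\rangle$, to equal $r(A)$. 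For $m$ large this is a Zariski-open nonempty condition. Then by \cref{T-ReprqMatr} we get $\cM_G$, and the cyclic flats of $\cM_G$ supported on coordinate subspaces reproduce the cyclic flats of $M$ with the correct ranks; one checks, using the cryptomorphism~\eqref{e-RhoqM} applied to both $M$ and $\cM_G$, that there are no ``extra'' cyclic flats, so $\cZ(\cM_G)\cong\cZ$ as lattices.

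For part~(b), note that the corank-nullity pair $\lambda(Z)=(\hat\rho-\rho(Z),\dim Z-\rho(Z))$ of a cyclic flat is computed purely from the rank $\rho(Z)$ and dimension $\dim Z$. In the construction of part~(a) applied to the matroid $M=(S,r)$, the cyclic flat of $\cM_G$ corresponding to the cyclic flat $A\subseteq S$ of $M$ is the coordinate subspace $Z_A=\langle e_i : i\in A\rangle$, which has $\dim Z_A=|A|$ and $\rho(Z_A)=r(A)$. Since $|A|$ is exactly the rank of $A$ as a subset in the matroid sense of ``size/dimension'' used to define $\Config(M)$, and $\hat\rho=\rho(\cM_G)=r(S)=\hat r$, the label of $Z_A$ in $\Config(\cM_G)$ coincides with the label of $A$ in $\Config(M)$. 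The lattice isomorphism $\cZ(M)\to\cZ(\cM_G)$ from part~(a) therefore descends to an isomorphism $\Config(M)\to\Config(\cM_G)$ of labeled lattices, i.e.\ $\Config(\cM_G)=\Config(M)$. Finally $\dim E=|S|$ by construction.

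The main obstacle is the genericity argument in part~(a): one must verify both that a generic $G$ over a large enough extension realizes the rank function $r$ on coordinate subspaces \emph{and} that this does not create spurious cyclic flats outside the coordinate subspaces — i.e.\ that $\cZ(\cM_G)$ is \emph{exactly} $\{Z_A : A\in\cZ(M)\}$ rather than something larger. The cleanest route is probably to show that $\rho$ agrees with the ``expected'' rank function $V\mapsto \min_{A\in\cZ(M)}(r(A)+\dim((V+Z_A)/Z_A))$ for all $V\in\cL(E)$, not just coordinate ones; submodularity and monotonicity give ``$\leq$'' automatically from~\eqref{e-RhoqM}, and the reverse inequality is where generic position of the columns of $G$ is used, via a standard argument that a generic specialization of the (finitely many) relevant minors attains the full possible rank. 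The statement that ``the converse is not true'' (not every configuration of a $q$-matroid comes from a matroid) is presumably supported elsewhere by an explicit example, e.g.\ one of the configurations appearing in the paper that cannot be matched by any matroid, so no proof is needed here beyond citing such an instance.
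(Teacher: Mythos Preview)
Your approach to part~(a) has two genuine gaps. First, to obtain a matrix $G$ with $\rk(GY\T)=r(A)$ for every coordinate subspace $\langle e_i:i\in A\rangle$ is precisely to ask that $G$ represent the matroid $M$ in the classical sense. But the matroid produced by the Bonin--De~Mier construction \cite[Thm.~2.1]{BoDM08} comes with no representability guarantee, and non-representable matroids certainly exist; you cannot simply assume such a $G$ is available. Second, even granting a representation, your claim that for generic $G$ the rank function of $\cM_G$ is \emph{globally} given by $V\mapsto\min_{A\in\cZ(M)}\big(r(A)+\dim((V+Z_A)/Z_A)\big)$, and hence that no spurious cyclic flats appear, is asserted as ``a standard argument'' but never carried out; it requires the simultaneous nonvanishing, over a finite extension, of a system of minors indexed by all of $\cL(E)$, and you have not checked that the relevant Zariski-open locus is nonempty.

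The paper avoids both problems by dropping representability entirely. It uses the same coordinate-subspace map $\sigma:A\mapsto\langle e_i:i\in A\rangle$, but instead of searching for a matrix it verifies directly that the collection $\cZ'=\{\sigma(A):A\in\cZ(M)\}$ with values $\rho(\sigma(A))=r(A)$ satisfies the cyclic-flat axioms (Z0)--(Z3) of \cite[Def.~4.1]{AlBy24}. Since $\sigma$ sends cardinality to dimension and set operations to the corresponding subspace operations, these axioms follow line by line from their matroid counterparts in \cite[Thm.~3.2]{BoDM08}. The cryptomorphism \cite[Cor.~4.12]{AlBy24} then furnishes a $q$-matroid $\cM$ with $\cZ(\cM)=\cZ'$ exactly, with no risk of extra cyclic flats. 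Your argument for part~(b) is correct once~(a) is in place and matches the paper's.
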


For the proof we need the following simple result.

\begin{lemma}\label{L-LattIso}
Let $(\cZ,\leq,\wedge,\vee)$ be a lattice and $(\cZ',\leq')$ be a poset. Let $\sigma:\cZ\longrightarrow\cZ'$ be a bijection such that 
\[
   A\leq B\Longleftrightarrow \sigma(A)\leq'\sigma(B)
\]
for all $A,B\in\cZ$. Then $(\cZ',\leq',\wedge,\vee)$ is a lattice with meet $\sigma(A)\wedge\sigma(B)=\sigma(A\wedge B)$ and join
$\sigma(A)\vee\sigma(B)=\sigma(A\vee B)$.
Thus,~$\sigma$ is a lattice isomorphism. 
\end{lemma}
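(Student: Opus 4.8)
The plan is to show that $\cZ'$ inherits the lattice structure of $\cZ$ via the order-isomorphism $\sigma$, by verifying directly that $\sigma(A\wedge B)$ and $\sigma(A\vee B)$ satisfy the universal properties of meet and join in $(\cZ',\leq')$. First I would fix $A,B\in\cZ$ and set $c=\sigma(A\wedge B)\in\cZ'$. Since $A\wedge B\leq A$ and $A\wedge B\leq B$ in $\cZ$, the hypothesis gives $c\leq'\sigma(A)$ and $c\leq'\sigma(B)$, so $c$ is a common lower bound of $\sigma(A)$ and $\sigma(B)$ in $\cZ'$. For maximality, let $d\in\cZ'$ be any lower bound, i.e.\ $d\leq'\sigma(A)$ and $d\leq'\sigma(B)$. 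Because $\sigma$ is a bijection, $d=\sigma(D)$ for a unique $D\in\cZ$, and applying the hypothesis in the reverse direction yields $D\leq A$ and $D\leq B$; hence $D\leq A\wedge B$ by the defining property of the meet in $\cZ$, and applying the hypothesis once more gives $d=\sigma(D)\leq'\sigma(A\wedge B)=c$. Thus $c$ is the greatest lower bound of $\sigma(A)$ and $\sigma(B)$, which is exactly the statement $\sigma(A)\wedge\sigma(B)=\sigma(A\wedge B)$, and in particular the meet exists in $\cZ'$.

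The argument for the join is the order-dual of the above: with $c'=\sigma(A\vee B)$, the relations $A\leq A\vee B$ and $B\leq A\vee B$ give that $c'$ is a common upper bound, and for any upper bound $d'=\sigma(D')$ one deduces $A\leq D'$, $B\leq D'$, hence $A\vee B\leq D'$, hence $c'\leq'd'$. This shows $\sigma(A)\vee\sigma(B)=\sigma(A\vee B)$ exists in $\cZ'$. Since meets and joins exist for every pair, $(\cZ',\leq')$ is a lattice, and the two displayed identities say precisely that $\sigma$ is a homomorphism of lattices; being a bijection that reflects the order, it is a lattice isomorphism. (One may also note that $\sigma^{-1}$ reflects the order by symmetry of the biconditional, so $\sigma^{-1}$ is order-preserving as well, which together with $\sigma$ being order-preserving is a second way to see that $\sigma$ is a lattice isomorphism once $\cZ'$ is known to be a lattice.)

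This is an entirely routine verification and I do not anticipate a genuine obstacle; the only point requiring a moment's care is that the maximality/minimality half of each argument must go through the bijectivity of $\sigma$ to pull an arbitrary bound in $\cZ'$ back to an element of $\cZ$, so that the lattice axioms of $\cZ$ can be invoked — without surjectivity of $\sigma$ one could not conclude that $\sigma(A\wedge B)$ dominates every lower bound. I would present the meet case in full and remark that the join case is dual.
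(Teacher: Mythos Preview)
Your proposal is correct and follows essentially the same approach as the paper's proof: both verify directly that $\sigma(A\wedge B)$ and $\sigma(A\vee B)$ satisfy the universal properties of meet and join in $(\cZ',\leq')$, using the order-preserving direction for the bound step and the bijectivity plus order-reflecting direction for the extremality step. The only cosmetic difference is that the paper presents the join case in full and leaves the meet as dual, whereas you do the reverse.
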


\begin{proof}
First off, $A,\,B\leq A\vee B$ implies $\sigma(A),\,\sigma(B)\leq' \sigma(A\vee B)$. Hence $\sigma(A\vee B)$ is an upper bound of 
$\sigma(A)$ and $\sigma(B)$. 
Suppose now we have another upper bound, say $\sigma(A),\,\sigma(B)\leq'\sigma(C)$ for some $C\in \cZ$.
Then $A,\,B\leq C$ and thus $A\vee B\leq C$ and $\sigma(A\vee B)\leq'\sigma(C)$.
This shows that $ \sigma(A\vee B)$ is the join of $\sigma(A)$ and $\sigma(B)$.
In the same way, one establishes $\sigma(A\wedge B)$ as the meet of $\sigma(A)$ and $\sigma(B)$.
\end{proof}

\textit{Proof of \cref{T-Lattice}:}
(a) Let $(\cZ,\leq,\wedge,\vee)$ be a finite lattice. Thanks to \cite[Thm.~2.1]{BoDM08} there exists a matroid, 
say $M=([n],r)$, such that $\cZ(M)\cong\cZ$. Without loss of generality we may assume that $\cZ(M)=\cZ$.
We proceed in several steps.
\\ 
1) Choose an $n$-dimensional $\F$-vector space $E=\subspace{e_1,\ldots,e_n}$ and consider the injective map
\begin{equation}\label{e-sigma}
  \sigma:\cL([n])\longrightarrow\cL(E),\quad \{i_1,\ldots,i_t\}\longmapsto\subspace{e_{i_1},\ldots,e_{i_t}}.
\end{equation}
Then~$\sigma$ is inclusion-preserving and satisfies $|A|=\dim\sigma(A)$ for all $A\subseteq[n]$. Moreover,
\begin{equation}\label{e-InterSum}
    \sigma(A\cap B)=\sigma(A)\cap\sigma(B)\ \text{ and }\ \sigma(A\cup B)=\sigma(A)+\sigma(B).
\end{equation}
Consider the poset $(\cZ',\leq)$, where $\cZ'=\{\sigma(A)\mid A\in\cZ\}$ and $\leq$ denotes subspace containment. 
Thus~$\sigma$ induces a bijection between the lattice $(\cZ,\subseteq)$ and $(\cZ',\leq)$.
Furthermore,~$\sigma$ satisfies
$A\subseteq B\Longleftrightarrow \sigma(A)\leq\sigma(B)$ for all $A,B\in\cZ$, and therefore \cref{L-LattIso} implies that $\cZ'$ is a lattice and~$\sigma$ a lattice isomorphism, that is,
\begin{equation}\label{e-LattIso}
  \sigma(A\wedge B)=\sigma(A)\wedge\sigma(B)\ \text{ and }\  \sigma(A\vee B)=\sigma(A)\vee \sigma(B).
\end{equation}
2) Define the map $\rho:\cZ'\longrightarrow\N_0$ via
\begin{equation}\label{e-rhosigma}
  \rho(\sigma(A))=r(A)\text{ for all }A\in\cZ.
\end{equation}
We show next that all $\sigma(A),\sigma(B)\in\cZ'$ satisfy the following properties.
\\[.5ex]
(Z0) $\sigma(A)\wedge\sigma(B)\leq \sigma(A)\cap\sigma(B)$ and $\sigma(A)+\sigma(B)\leq\sigma(A)\vee\sigma(B)$;\\
(Z1) $\sigma(0_{\cZ})=0_{\widetilde{\cZ}}$ and $\rho(0_{\widetilde{\cZ}})=0$;\\
(Z2) $0<\rho(\sigma(A))-\rho(\sigma(B))<\dim\sigma(A)-\dim\sigma(B)$ whenever $\sigma(B)\lneq\sigma(A)$;\\
(Z3) $\rho(\sigma(A))+\rho(\sigma(B))\geq \rho(\sigma(A)\vee\sigma(B))+\rho(\sigma(A)\wedge\sigma(B))+
        \dim\big((\sigma(A)\cap\sigma(B))/(\sigma(A)\wedge\sigma(B))\big)$.\\[.5ex]
(Z0) follows from $A\wedge B\subseteq A\cap B$ and $A\cup B\subseteq A\vee B$ together with \eqref{e-InterSum} and \eqref{e-LattIso}.
The remaining statements follow from the according properties of cyclic flats of matroids given in \cite[Thm.~3.2]{BoDM08}.
Precisely,~(Z1) is clear with \cite[Thm.~3.2(Z1)]{BoDM08}.
For~(Z2) note that $\sigma(B)\lneq\sigma(A)$ implies $B\subsetneq A$. 
By definition of~$\rho$, the inequalities follow immediately from \cite[Thm.~3.2(Z2)]{BoDM08}.
In the same way, (Z3) follows from \eqref{e-LattIso} and \cite[Thm.~3.2(Z3)]{BoDM08} together with the identity
\begin{align*}
  \dim\big((\sigma(A)\cap\sigma(B))/(\sigma(A)\wedge\sigma(B))\big)&=\dim(\sigma(A\cap B))-\dim(\sigma(A\wedge B))\\
   &=|(A\cap B)-(A\wedge B)|.
\end{align*}
3) The previous part shows that $(\cZ',\leq,\wedge,\vee)$ together with the map $\rho:\cZ'\longrightarrow\N_0$ 
satisfies the criterion for cyclic flats of \qM{}s given in \cite[Def.~4.1]{AlBy24}, and therefore \cite[Cor.~4.12]{AlBy24} tells us that 
the map $\rho:\cZ'\longrightarrow\N_0$ extends to a map~$\rho:\cL(E)\longrightarrow\N_0$ such that  $\cM=(E,\rho)$ is a \qM{} and 
$\cZ(\cM)=\cZ'$.
This concludes the proof of~(a).
\\
(b) Given $M=(S,r)$ and $\Config(M)$. WLOG we may assume that $S=[n]$. 
Consider the \qM{} $\cM=(E,\rho)$ as constructed in the proof of~(a). 
Then $\dim\sigma(A)=|A|$ and $\rho(\sigma(A))=r(A)$ for all $A\in\cZ(M)$; see \eqref{e-sigma} and \eqref{e-rhosigma}. 
This shows that the lattice isomorphism $\sigma:\cZ(M)\longrightarrow\cZ(\cM)$ preserves the corank and nullity of the cyclic sets, and thus $\Config(M)=\Config(\cM)$. \hfill $\square$

The process of associating a \qM{} to a given matroid as in the above proof may be regarded as a converse of \cite[Thm.~4.4]{GLJ22Gen}.

The converse of \cref{T-Lattice}(b) is not true, that is, there exist \qM{}s whose configurations do not arise as the configuration of a matroid. 

\begin{exa}\label{E-NoConfig}
Consider again Examples~\ref{E-CloudFlock2} and~\ref{E-ConfigCF}.
The configuration and lattice of cyclic flats are given by 
\begin{equation}\label{e-ZLattice}
\begin{array}{l}
   \begin{xy}
   (25,0)*+{\mbox{$(3,0)$}}="c0";%
   (10,10)*+{\mbox{$(2,1)$}}="c1";%
   (20,10)*+{\mbox{$(2,1)$}}="c2";%
   (30,10)*+{\mbox{$(1,1)$}}="c3";%
   (40,10)*+{\mbox{$(1,1)$}}="c4";%
   (50,10)*+{\mbox{$(1,1)$}}="c5";%
   (15,20)*+{\mbox{$(1,2)$}}="c6";%
   (25,30)*+{\mbox{$(0,3)$}}="c7";%
    {\ar@{-}"c0";"c1"};
    {\ar@{-}"c0";"c2"};
    {\ar@{-}"c0";"c3"};
    {\ar@{-}"c0";"c4"};
    {\ar@{-}"c0";"c5"};
    {\ar@{-}"c1";"c6"};
    {\ar@{-}"c2";"c6"};
    {\ar@{-}"c3";"c7"};
    {\ar@{-}"c4";"c7"};
    {\ar@{-}"c5";"c7"};    
    {\ar@{-}"c6";"c7"};
   \end{xy}
\end{array}
\qquad
\begin{array}{l}
   \begin{xy}
   (25,0)*+{\emptyset}="c0";%
   (10,10)*+{Z_1}="c1";%
   (20,10)*+{Z_2}="c2";%
   (30,10)*+{Z_3}="c3";%
   (40,10)*+{Z_4}="c4";%
   (50,10)*+{Z_5}="c5";%
   (15,20)*+{Z_6}="c6";
   (25,30)*+{Z_7}="c7";%
    {\ar@{-}"c0";"c1"};
    {\ar@{-}"c0";"c2"};
    {\ar@{-}"c0";"c3"};
    {\ar@{-}"c0";"c4"};
    {\ar@{-}"c0";"c5"};
    {\ar@{-}"c1";"c6"};
    {\ar@{-}"c2";"c6"};
    {\ar@{-}"c3";"c7"};
    {\ar@{-}"c4";"c7"};
    {\ar@{-}"c5";"c7"};    
    {\ar@{-}"c6";"c7"};
   \end{xy}
\end{array}
\end{equation}
Suppose there exists a matroid $M=([n],r)$ such that $\Config(M)$ equals the above configuration.
The top and bottom corank-nullity pairs tell us that $r([n])=3$ and $n=6$.
Thus, the lattice of cyclic flats of~$M$ has the above form where $Z_7=[6]$.
The corank-nullity data imply
\[
  (r(Z_i),|Z_i|)=\left\{\begin{array}{cl}(1,2),&\text{for }i=1,2,\\ (2,3),&\text{for }i=3,4,5,\\(2,4),&\text{for }i=6,\\(3,6),&\text{for }i=7.\end{array}\right.
\]
Recall that \cite[Thm.~3.2(Z3)]{BoDM08} tells us that for all $i,j\in\{1,\ldots,7\}$
\[
   r(Z_i)+r(Z_j)\geq r(Z_i\vee Z_j)+r(Z_i\wedge Z_j)+|(Z_i\cap Z_j)-(Z_i\wedge Z_j)|.
\]
For $(i,j)=(1,2)$ this yields $1+1\geq2+0+|Z_1\cap Z_2|$, and thus $Z_1\cap Z_2=\emptyset$. 
In the same way we obtain $Z_1\cap Z_3=\emptyset=Z_2\cap Z_3$.
But there are no three pairwise disjoint subsets of cardinality $2,2$, and~$3$ in the set~$[6]$.
Hence there exists no matroid~$M$ whose configuration equals the one given in \cref{E-ConfigCF}.
Note that there exist matroids whose lattice of cyclic flats (disregarding corank-nullity data) equals the one in \eqref{e-ZLattice}; see also 
\cite[Thm.~2.1]{BoDM08}.
For instance, choose $Z_7=[8]$ of rank~$4$ and 
\[
   Z_6\!=\!\{1,2,3,4,5\}, Z_1\!=\!\{1,2,3\}, Z_2\!=\!\{3,4,5\}, Z_3\!=\!\{1,4,6,7\}, Z_4\!=\!\{1,5,6,8\}, Z_5\!=\!\{2,4,6,8\}
\]
with rank values $3,\,2,\,2,\,3,\,3,\,3$ in the given order. 
Then the properties (Z0)--(Z4) in \cite[Thm.~3.2]{BoDM08} are satisfied and thus there exists a matroid $M=([8],r)$ whose lattice of cyclic flats is the 
one in \eqref{e-ZLattice}.
\end{exa}

\section{Direct Sums}\label{S-DirSums}
Our first goal is to generalize \cref{T-ConfigWhitney} to arbitrary \qM{}s.
To do so, we need to consider direct sums of the form~$\cM=\cM_1\oplus \cU$, where $\cU$ is either free or trivial.
We will show that the configuration, cloud-flock lattice and Whitney function of~$\cM$ are fully determined by the according invariants of~$\cM_1$.
The proofs are somewhat lengthy, but very straightforward. 
As to our knowledge these rather unsurprising facts have not been worked out elsewhere.
Thereafter we will discuss duality and general direct sums. 

We start with showing that the configurations of two \qM{}s~$\cM_1$ and~$\cM_2$ determine the configuration of the direct sum $\cM_1\oplus\cM_2$.

To do so, recall that the direct product of posets $(\cP,\leq),\,(\cQ,\leq)$ is defined as $(\cP\times\cQ,\,\leq\,)$, 
where~$\cP\times\cQ$ is the cartesian product and the partial order is given by 
\[
   (a,b)\leq (a',b')\text{ if $a\leq a'$ and $b\leq b'$.}
\]
If $\cP$ and $\cQ$ are lattices, the same is true for
$\cP\times\cQ$ and $(a,b)\wedge(a',b')=(a\wedge a',b\wedge b')$ and $(a,b)\vee(a',b')=(a\vee a',b\vee b')$.

\begin{prop}\label{P-DirSumConfig}
Let $\cM_i=(E_i,\rho_i)$ for $i=1,2$ and $\cM=\cM_1\oplus\cM_2=(E,\rho)$ be the direct sum, where $E=E_1\oplus E_2$. 
As in \cref{D-Config}(a) let $\lambda_i(Z_i)=(\rho_i(E_i)-\rho_i(Z_i),\,\dim Z_i-\rho_i(Z_i))$ for $Z_i\in\cZ(\cM_i)$ and similarly let
$\lambda(Z)=(\rho(E)-\rho(Z),\,\dim Z-\rho(Z))$ for $Z\in\cZ(\cM)$.
Then the map
\[
             \Config(\cM_1)\times\Config(\cM_2)\longrightarrow\Config(\cM) ,\quad
            \big(\lambda_1(Z_1),\lambda_2(Z_2)\big)\longmapsto \lambda_1(Z_1)+\lambda_2(Z_2)
\]
is an isomorphism of lattices.
\end{prop}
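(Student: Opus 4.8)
The plan is to use the structural description of cyclic flats of a direct sum from \cref{T-DSCycFlats}, namely $\cZ(\cM)=\cZ(\cM_1)\oplus\cZ(\cM_2)$, together with the additivity of the rank function on direct summands from \eqref{e-RhoDirect}. First I would define the candidate map at the level of cyclic flats: send $(Z_1,Z_2)\in\cZ(\cM_1)\times\cZ(\cM_2)$ to $Z_1\oplus Z_2\in\cZ(\cM)$. By \cref{T-DSCycFlats} this is a bijection. I would then check it is a poset isomorphism: $Z_1\oplus Z_2\leq Z_1'\oplus Z_2'$ in $\cL(E)$ if and only if $Z_1\leq Z_1'$ and $Z_2\leq Z_2'$ (the ``only if'' direction using the projections $\pi_i$, or simply that $Z_i=(Z_1\oplus Z_2)\cap E_i$). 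Since $\cZ(\cM_1)\times\cZ(\cM_2)$ with the product order is a lattice and the map is an order-isomorphism onto $\cZ(\cM)$, \cref{L-LattIso} gives that it is a lattice isomorphism, with meet $(Z_1\wedge Z_1')\oplus(Z_2\wedge Z_2')$ and join $(Z_1\vee Z_1')\oplus(Z_2\vee Z_2')$ computed in the respective lattices $\cZ(\cM_i)$.

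Next I would pass to configurations. By \cref{D-Config}(a), $\Config(\cM_i)$ is the image of $\cZ(\cM_i)$ under $Z_i\mapsto\lambda_i(Z_i)$, and likewise $\Config(\cM)$ is the image of $\cZ(\cM)$ under $Z\mapsto\lambda(Z)$, each carrying the induced order. The key computation is that for $Z=Z_1\oplus Z_2$ one has, using \eqref{e-RhoDirect} and $\rho(E)=\rho_1(E_1)+\rho_2(E_2)$,
\[
   \lambda(Z_1\oplus Z_2)=\big(\rho(E)-\rho(Z_1\oplus Z_2),\,\dim(Z_1\oplus Z_2)-\rho(Z_1\oplus Z_2)\big)=\lambda_1(Z_1)+\lambda_2(Z_2),
\]
the sum being componentwise addition of the corank-nullity pairs. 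Thus the composite $\Config(\cM_1)\times\Config(\cM_2)\to\cZ(\cM_1)\times\cZ(\cM_2)\to\cZ(\cM)\to\Config(\cM)$ sends $(\lambda_1(Z_1),\lambda_2(Z_2))$ to $\lambda_1(Z_1)+\lambda_2(Z_2)$, which is exactly the map in the statement; so the map is well defined and surjective. The only remaining point is injectivity as a map of \emph{labelled} lattices: a priori distinct pairs of cyclic flats could carry the same pair of labels, so one must argue that the map is an isomorphism of \emph{posets} (hence of lattices by \cref{L-LattIso}), not merely that the underlying sets match. This follows because the order on $\Config(\cM)$ is defined to be the one induced from $\cZ(\cM)$ via the labelling quotient, and the order on $\Config(\cM_1)\times\Config(\cM_2)$ is the product of the induced orders; the cyclic-flat-level isomorphism above descends to a poset isomorphism between these quotients because the labelling map $\lambda$ on $\cZ(\cM)$ factors compatibly through $\lambda_1\times\lambda_2$.

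The main obstacle, and the place where care is genuinely needed, is the bookkeeping in this last descent step: the objects $\Config(\cM_i)$ are lattices whose elements are corank-nullity pairs with a partial order \emph{defined via} the cyclic flats, and one must be precise that this order is well defined (i.e.\ that $\lambda(Z)\leq\lambda(Z')$ whenever $Z\leq Z'$ is consistent, as asserted in \cref{D-Config}(a)) and that the product order on $\Config(\cM_1)\times\Config(\cM_2)$ is precisely the order induced from $\cZ(\cM_1)\times\cZ(\cM_2)=\cZ(\cM)$ under the identification $\lambda_1(Z_1)+\lambda_2(Z_2)=\lambda(Z_1\oplus Z_2)$. Once this is set up cleanly, the meet and join formulas $\lambda(Z)\wedge\lambda(Z')=\lambda(Z\wedge Z')$ transport directly and the proof closes; everything else is the routine linear algebra of direct sums and an application of \cref{L-LattIso}.
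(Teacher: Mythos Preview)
Your approach is essentially the same as the paper's: use \cref{T-DSCycFlats} to get the bijection $(Z_1,Z_2)\mapsto Z_1\oplus Z_2$ between $\cZ(\cM_1)\times\cZ(\cM_2)$ and $\cZ(\cM)$, observe it is an order-isomorphism (hence a lattice isomorphism), and then verify via \eqref{e-RhoDirect} that $\lambda(Z_1\oplus Z_2)=\lambda_1(Z_1)+\lambda_2(Z_2)$. The paper's proof is exactly this, in three sentences.

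Your extended discussion of the ``descent'' step is over-cautious given the paper's conventions. Immediately after \cref{D-Config} the paper states that $\cZ(\cM)$, $\CF(\cM)$, and $\Config(\cM)$ are \emph{isomorphic lattices by definition}; in other words, $\Config(\cM)$ is not a genuine quotient by label but rather the lattice $\cZ(\cM)$ with each vertex decorated by its corank-nullity pair (repeated labels are allowed, as in \cref{E-ConfigCF}). So there is no well-definedness or injectivity issue to resolve: once you have the lattice isomorphism at the level of cyclic flats and have checked that the labels transform by addition, you are done. Your invocation of \cref{L-LattIso} is fine but also unnecessary, since the paper treats order-isomorphisms between lattices as lattice isomorphisms without further comment here.
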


\begin{proof}
From \cref{T-DSCycFlats} we know that $\cZ(\cM)=\{Z_1\oplus Z_2\mid Z_i\in\cZ(\cM_i)\}$. 
The subspaces in $\cZ(\cM)$ clearly satisfy 
$Z_1\oplus Z_2\leq Z_1'\oplus Z_2'\Longleftrightarrow Z_i\leq Z_i'$ for $i=1,2$, and therefore 
we have a lattice isomorphism
$\cZ(\cM_1)\times\cZ(\cM_2)\longrightarrow\cZ(\cM),\ (Z_1,Z_2)\longmapsto Z_1\oplus Z_2$.
Furthermore, by \eqref{e-RhoDirect}
\begin{align*}
    \lambda_1(Z_1)+\lambda_2(Z_2)&=\big(\rho_1(E_1)+\rho_2(E_2)-\rho_1(Z_1)-\rho_2(Z_2),\,\dim Z_1+\dim Z_2-\rho_1(Z_1)-\rho_2(Z_2)\big)\\
        &=\big(\rho(E)-\rho(Z_1\oplus Z_2),\,\dim(Z_1\oplus Z_2)-\rho(Z_1\oplus Z_2)\big)=\lambda(Z_1\oplus Z_2).
\end{align*}
Recalling the definition of the configuration lattices in \cref{D-Config}(a), we conclude that the given map is a lattice isomorphism, as stated.
\end{proof}

In order to discuss direct sums involving a free or trivial summand, we need the following results from basic Linear Algebra, whose proof is 
in the appendix.

\begin{lemma}\label{L-ExtendSpaces}
For $i=1,2$ let $\dim E_i=n_i$ and $V_i\leq E_i$ such that $\dim V_i=k_i$. Let $j\in[n_1+n_2]$.
\begin{alphalist}
\item Set $\cV=\{V\leq E\mid \pi_1(V)=V_1,\, \dim V=j\}$. 
         Then 
         \[
             \big|\cV\big|=\left\{\begin{array}{cl}
                \Gaussian{n_2}{j-k_1}_q q^{k_1(n_2-j+k_1)},&\text{if }j\in\{k_1,\ldots,k_1+n_2\},\\[.7ex]
                  0,&\text{otherwise.}\end{array}\right.
         \]

\item Set $\cV'=\{V\leq E\mid V\cap E_2=V_2\text{ and }\dim \pi_1(V)=j\}$. Then
         \[
             \big|\cV'\big|=\left\{\begin{array}{cl}
                \Gaussian{n_1}{j}_q q^{j(n_2-k_2)},&\text{if }j\in\{0,\ldots,n_1\},\\[.7ex]
                  0,&\text{otherwise.}\end{array}\right.
         \]
         Note that all spaces in $\cV'$ have dimension~$j+k_2$.
\end{alphalist}
\end{lemma}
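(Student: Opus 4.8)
The plan is to count in both cases by a "completion" argument: describe a subspace $V$ with the prescribed intersection/projection data in terms of a linear map on top of a fixed skeleton, and then count those maps.

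For part~(a), I would fix a basis of $E_1$ adapted to $V_1$ and a basis of $E_2$, so that (after an obvious change of coordinates) $V_1 = \subspace{e_1,\ldots,e_{k_1}}$ inside $E_1$. A subspace $V$ with $\pi_1(V)=V_1$ must contain no information "outside" $V_1\oplus E_2$ in its first component, so $V\leq V_1\oplus E_2$, and $\pi_1|_V : V \to V_1$ is surjective with kernel $V\cap E_2$. Writing $V\cap E_2 = U \leq E_2$ with $\dim U = j - k_1$ (forced by $\dim V = j$), the space $V$ is the graph of a linear map $g: V_1 \to E_2/U$ lifted to a splitting; equivalently, once $U$ is chosen, the spaces $V$ with $V\cap E_2 = U$ and $\pi_1(V)=V_1$ are in bijection with $\mathrm{Hom}(V_1, E_2/U)$, of which there are $q^{k_1(n_2 - (j-k_1))} = q^{k_1(n_2 - j + k_1)}$. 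Summing over the $\Gaussian{n_2}{j-k_1}_q$ choices of $U\leq E_2$ gives the formula, and the range condition $k_1 \le j \le k_1 + n_2$ is exactly the condition that such a $U$ exists. I would present this cleanly via the graph description rather than fussing with explicit matrices.

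For part~(b), the role of the two factors is swapped: now the intersection with $E_2$ is pinned down to be $V_2$ (with $\dim V_2 = k_2$), and we prescribe $\dim\pi_1(V) = j$. Here $V\cap E_2 = V_2$ forces $\dim V = j + k_2$, and the map $\pi_1|_V$ has kernel exactly $V_2$, so it induces an isomorphism $V/V_2 \xrightarrow{\sim} \pi_1(V) =: W_1 \le E_1$ with $\dim W_1 = j$. Conversely, to build such a $V$ one first picks $W_1 \le E_1$ of dimension $j$ (there are $\Gaussian{n_1}{j}_q$ of these), and then $V$ is determined by a lift of $W_1$ into $E$ transverse to $E_2$ but agreeing with $V_2$ on the $E_2$-part — i.e. $V$ is the span of $V_2$ together with $\{w + h(w) : w \in W_1\}$ where $h : W_1 \to E_2/V_2$ is an arbitrary linear map (two such maps giving the same $V$ iff they are equal). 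Thus the number of $V$ over a fixed $W_1$ is $|\mathrm{Hom}(W_1, E_2/V_2)| = q^{j(n_2 - k_2)}$, and multiplying by $\Gaussian{n_1}{j}_q$ yields the claim; the range $0\le j\le n_1$ is automatic.

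I do not expect a genuine obstacle here — this is elementary linear algebra over $\F_q$. The one place to be careful is the bijection "subspaces $V$ $\leftrightarrow$ pairs (skeleton, $\mathrm{Hom}$-element)": I should check in each case that the correspondence is genuinely a bijection (no overcounting), which amounts to observing that two linear maps $g_1, g_2$ into the relevant quotient produce the same subspace $V$ only if $g_1 = g_2$, because the quotient has already been taken by $V\cap E_2$ (resp. $V_2$). With that verified, the counts are immediate. I would relegate the full details to the appendix, as the paper already signals.
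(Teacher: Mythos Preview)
Your argument is correct and complete; the bijection between subspaces with prescribed projection/intersection data and elements of the relevant $\mathrm{Hom}$-set is exactly right, and you have identified the one point that needs checking (injectivity of $g\mapsto V_g$, which follows because you have already quotiented by $U$, resp.\ $V_2$).

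The paper takes a different, more computational route: it fixes coordinates so that $E_i=\F_q^{n_i}$ and $V_i=\rs(M_i)$ with $M_i$ in RREF, and then characterizes the spaces in $\cV$ (resp.\ $\cV'$) by the shape of their RREF generator matrix. For~(a) this is $\Smalltwomat{M_1\ \,T_{21}}{\,0\ \ \;T_{22}}$ with $T_{22}\in\F_q^{(j-k_1)\times n_2}$ in RREF and $T_{21}$ having zeros in the pivot columns of $T_{22}$; for~(b) it is $\Smalltwomat{T_1\ \,\hat{T}}{\,0\ \,M_2}$ with $T_1$ in RREF and $\hat{T}$ having zeros in the pivot columns of $M_2$. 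The count then drops out by counting RREF matrices and free entries. Your graph/\,$\mathrm{Hom}$ description is the coordinate-free version of exactly this: your choice of $U\leq E_2$ corresponds to the row space of $T_{22}$, and your map $g\in\mathrm{Hom}(V_1,E_2/U)$ corresponds to the matrix $T_{21}$ modulo the pivot columns of $T_{22}$. The paper's approach has the virtue of being entirely mechanical once coordinates are fixed; yours is cleaner, explains \emph{why} the count factors as it does, and transfers without change to any direct sum of vector spaces.
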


The next lemma about the flats will help us determine the clouds of direct sums involving a free or trivial summand.
Its proof has been deferred to the appendix.

\begin{lemma}\label{L-FlatsFree}
Let $\cM_1=(E_1,\rho_1)$ be a \qM{} on an $n_1$-dimensional ground space~$E_1$.
\begin{alphalist}
\item Let $\cM=\cM_1\oplus\cU_{n_2,n_2}(E_2)$. Set $E=E_1\oplus E_2$. Then any $V\in\cL(E)$ satisfies
      \begin{equation}\label{e-DimRhoFree}
           \dim V=\dim(V\cap E_1)+\dim\pi_2(V)\ \; \text{ and }\ \; \rho(V)=\rho_1(V\cap E_1)+\dim\pi_2(V).
      \end{equation}
      Furthermore, $\cF(\cM)=\{F\in\cL(E)\mid F\cap E_1=\cF(\cM_1)\}$.
\item Let $\cM=\cM_1\oplus\cU_{0,n_2}(E_2)$. Set $E=E_1\oplus E_2$. Then any $V\in\cL(E)$ satisfies
      \begin{equation}\label{e-DimRhoTriv}
           \rho(V)=\rho_1(\pi_1(V)).
      \end{equation}
      Furthermore, $\cF(\cM)=\{F_1\oplus E_2\mid F_1\in\cF(\cM_1)\}$.
\end{alphalist}
\end{lemma}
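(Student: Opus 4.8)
The plan is to establish the two parts separately; in each case the dimension/rank formulas come first and feed directly into the characterization of the flats. For part~(a), fix $V\in\cL(E)$ where $\cM=\cM_1\oplus\cU_{n_2,n_2}(E_2)$. The dimension identity $\dim V=\dim(V\cap E_1)+\dim\pi_2(V)$ is elementary linear algebra: the kernel of $\pi_2|_V$ is exactly $V\cap E_1$ (since $E_1=\ker\pi_2$), so the rank--nullity theorem gives the claim. For the rank identity, apply \eqref{e-rho} from \cref{T-DirSum}: writing $\rho_1'(X)=\rho_1(\pi_1(X))$ and $\rho_2'(X)=\rho_2(\pi_2(X))=\dim\pi_2(X)$ (because $\cU_{n_2,n_2}$ is free), we get $\rho(V)=\dim V+\min_{X\le V}(\rho_1(\pi_1(X))+\dim\pi_2(X)-\dim X)$. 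Using the just-proved dimension formula applied to $X$, $\dim\pi_2(X)-\dim X=-\dim(X\cap E_1)$, so the minimand is $\rho_1(\pi_1(X))-\dim(X\cap E_1)$. This is minimized by taking $X=V\cap E_1$ (monotonicity of $\rho_1$ makes smaller $X$ no worse on the first term, and $X\le E_1$ makes $\dim(X\cap E_1)=\dim X$ as large as possible relative to $\pi_1(X)$); one checks the minimum equals $\rho_1(V\cap E_1)-\dim(V\cap E_1)$, giving $\rho(V)=\rho_1(V\cap E_1)+\dim\pi_2(V)$ after substituting the dimension formula. I would want to verify carefully that no $X\not\le E_1$ does better — this is the one spot needing a short submodularity/monotonicity argument rather than pure bookkeeping.

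With \eqref{e-DimRhoFree} in hand, the flat characterization follows from the definition of a flat via $\rho(F)<\rho(F+\langle x\rangle)$ for all $x\notin F$. Given $F$ with $F\cap E_1\in\cF(\cM_1)$, and $x\notin F$: decompose $x=x_1+x_2$; if $x_2\notin\pi_2(F)$ then $\dim\pi_2(F+\langle x\rangle)=\dim\pi_2(F)+1$ and the rank strictly increases by \eqref{e-DimRhoFree}; if $x_2\in\pi_2(F)$ one can reduce to the case $x\in E_1$ (adjust $x$ by an element of $F$), then $(F+\langle x\rangle)\cap E_1$ strictly contains $F\cap E_1$ and, $F\cap E_1$ being a flat of $\cM_1$, the rank $\rho_1$ strictly increases. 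Conversely, if $F\cap E_1$ is not a flat of $\cM_1$, pick $x\in E_1$ witnessing $\rho_1((F\cap E_1)+\langle x\rangle)=\rho_1(F\cap E_1)$ with $x\notin F\cap E_1$; one checks $x\notin F$ but $\rho(F+\langle x\rangle)=\rho(F)$, so $F$ is not a flat.

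For part~(b), where $\cM=\cM_1\oplus\cU_{0,n_2}(E_2)$, the rank formula $\rho(V)=\rho_1(\pi_1(V))$ is quickest from \cref{T-DSCycFlats}: $\cZ(\cM)=\cZ(\cM_1)\oplus\cZ(\cU_{0,n_2})=\{Z_1\oplus E_2\mid Z_1\in\cZ(\cM_1)\}$ (since $\cZ(\cU_{0,n_2})=\{E_2\}$ by \cref{E-CloudFlockUniform}), and plugging into \eqref{e-RhoqM} gives $\rho(V)=\min_{Z_1}(\rho_1(Z_1)+\dim((V+Z_1\oplus E_2)/(Z_1\oplus E_2)))=\min_{Z_1}(\rho_1(Z_1)+\dim((\pi_1(V)+Z_1)/Z_1))=\rho_1(\pi_1(V))$, the last equality again by \eqref{e-RhoqM} for $\cM_1$. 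The flat characterization then follows: since $\rho$ depends on $V$ only through $\pi_1(V)$ and $\pi_1$ is surjective with kernel $E_2$, one has $\rho(F)<\rho(F+\langle x\rangle)$ for all $x\notin F$ iff $F\supseteq E_2$ and $\pi_1(F)=F\cap(\text{a complement})\cdots$ — more cleanly, $F$ is a flat iff $E_2\le F$ and $\pi_1(F)\in\cF(\cM_1)$, i.e. $F=F_1\oplus E_2$ with $F_1\in\cF(\cM_1)$; if $E_2\not\le F$, any $x\in E_2\setminus F$ gives $\rho(F+\langle x\rangle)=\rho_1(\pi_1(F))=\rho(F)$, so $F$ is not a flat.

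The main obstacle I anticipate is the minimization step in part~(a): confirming that the minimum in \eqref{e-rho} is attained at $X=V\cap E_1$ and equals $\rho_1(V\cap E_1)-\dim(V\cap E_1)$, rather than at some skew subspace $X$. Everything else is routine once \eqref{e-DimRhoFree} and \eqref{e-DimRhoTriv} are available.
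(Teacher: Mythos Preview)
Your proposal is correct, and the flat characterizations in both parts follow essentially the same case analysis as the paper (decompose $x=x_1+x_2$, reduce to $x\in E_1$ when $x_2\in\pi_2(F)$, etc.).

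The one genuine difference is how the rank formulas \eqref{e-DimRhoFree} and \eqref{e-DimRhoTriv} are obtained. The paper does not prove them at all: for~(a) it cites \cite[Proof of Prop.~3.9]{GLJ22Rep}, and for~(b) it invokes \cref{R-MiPrime}, which literally says that $\cM_1\oplus\cU_{0,n_2}$ is the auxiliary \qM{} $\cM_1'$ whose rank function is defined to be $V\mapsto\rho_1(\pi_1(V))$. Your arguments are self-contained instead: for~(a) you carry out the minimization in \eqref{e-rho} directly, and for~(b) you go through \cref{T-DSCycFlats} and \eqref{e-RhoqM}. Both routes work. Your worry about the minimization in~(a) is easily dispatched: for any $X\le V$ one has $\rho_1(\pi_1(X))-\dim(X\cap E_1)\ge \rho_1(X\cap E_1)-\dim(X\cap E_1)\ge \rho_1(V\cap E_1)-\dim(V\cap E_1)$, the first inequality by monotonicity of $\rho_1$ (since $X\cap E_1\le\pi_1(X)$) and the second by monotonicity of the nullity function on $\cL(E_1)$. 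For~(b), note that the paper's route via \cref{R-MiPrime} is shorter than yours through cyclic flats, since the identity is essentially the definition of $\cM_1'$.
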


Now we are ready for the particular direct sums. We start with those involving a free \qM.

\begin{prop}\label{P-FreeSummand}
Let $\cM_1=(E_1,\rho_1)$ be a \qM{} on an $n_1$-dimensional ground space~$E_1$ with rank function~$\rho_1$ and $\rho_1(\cM_1)=\hat{\rho}$.
Let $\cM=\cM_1\oplus\cU_{n_2,n_2}(E_2)$. 
Then 
\begin{alphalist}
\item $\cZ(\cM)=\{Z_1\oplus0\mid Z_1\in\cZ(\cM_1)\}$, and thus the lattices $\cZ(\cM)$ and~$\cZ(\cM_1)$ are isomorphic. 
        Furthermore,  $\Config(\cM_1)$ is isomorphic to $\Config(\cM)$ via the map
        $(a,b)\longmapsto(a+n_2,b)$.
\item $\cl^{-1}(Z_1\oplus0)=\cl^{-1}(Z_1)\oplus0$ and $f_{\cM,Z_1\oplus 0}=f_{\cM_1,Z_1}$ for all $Z_1\in\cZ(\cM_1)$.
\item Let $Z_1\in\cZ(\cM_1)$. Then $\cyc^{-1}(Z_1\oplus0)=\{F\leq E_1\oplus E_2\mid F\cap E_1\in\cyc^{-1}(Z_1)\}$.
        Moreover, if $\rho_1(Z_1)=r$ and $\dim Z_1=d$, then 
        \[
               c_{\cM,Z_1\oplus0}=\sum_{t=r}^{\hat{\rho}+n_2}\sum_{j=r}^{t}b_j
                        \GaussianD{n_2}{t-j}_q q^{(t-j)(n_1-j-d+r)}x^{\hat{\rho}+n_2-t},         
               \text{ where }
               c_{\cM_1,Z_1}=\sum_{j=r}^{\hat{\rho}}b_jx^{\hat{\rho}-j}.
         \]
         As a consequence, $\CF(\cM)$ and $\CF(\cM_1)$ fully determine each other, and 
         $(c_{\cM_1,Z_1},f_{\cM_1,Z_1})\rightarrow(c_{\cM,Z_1\oplus0},f_{\cM_1,Z_1})$ is a lattice isomorphism.
\item Let $R_{\cM_1}=\sum_{i=0}^{\hat{\rho}}\sum_{j=0}^{n_1-\hat{\rho}}\nu_{i,j}^{(1)}x^iy^j$. Then 
        \[
           R_{\cM}=\sum_{\ell=0}^{\hat{\rho}}\sum_{b=0}^{n_1-\hat{\rho}}\nu_{\ell,b}^{(1)} \sum_{a=\ell}^{\hat{\rho}+n_2}
          \GaussianD{n_2}{a-\ell}_q q^{(\ell-a+n_2)(n_1-\hat{\rho}-b+\ell)}x^ay^b.
        \]
\end{alphalist}
\end{prop}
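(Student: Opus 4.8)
\textbf{Proof plan for \cref{P-FreeSummand}.}
Parts (a) and (b) are quick consequences of earlier results. For (a), \cref{T-DSCycFlats} gives $\cZ(\cM)=\cZ(\cM_1)\oplus\cZ(\cM_2)$, and $\cZ(\cU_{n_2,n_2}(E_2))=\{0\}$ by \cref{E-CloudFlockUniform}, so $\cZ(\cM)=\{Z_1\oplus0\mid Z_1\in\cZ(\cM_1)\}$ with $Z_1\mapsto Z_1\oplus0$ a lattice isomorphism; applying \cref{P-DirSumConfig} with $\cM_2=\cU_{n_2,n_2}(E_2)$ (whose configuration is the one-point lattice $\{(n_2,0)\}$) yields the isomorphism $(a,b)\mapsto(a+n_2,b)$. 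For (b), since $Z_1\oplus0=Z_1\leq E_1$, \cref{P-CloudFlockProp} forces every $V\in\cl^{-1}(Z_1\oplus0)$ to lie inside $E_1$; then $V\cap E_1=V$ and \eqref{e-DimRhoFree} of \cref{L-FlatsFree}(a) gives $\rho(V)=\rho_1(V)$ and $\rho(Z_1\oplus0)=\rho_1(Z_1)$, so the condition $\rho(V)=\rho(Z_1\oplus0)$ is exactly $V\in\cl^{-1}(Z_1)$ in $\cM_1$, and the nullity $\dim V-\rho(V)=\dim V-\rho_1(V)$ is the $\cM_1$-nullity as well; hence $f_{\cM,Z_1\oplus0}=f_{\cM_1,Z_1}$.

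For part (c), combining \cref{P-CloudFlockProp} with \cref{L-FlatsFree}(a) (which describes $\cF(\cM)$ and, via \eqref{e-DimRhoFree}, shows that the nullity of $F$ in $\cM$ equals that of $F\cap E_1$ in $\cM_1$) shows that $F\in\cyc^{-1}(Z_1\oplus0)$ iff $F\cap E_1\in\cyc^{-1}(Z_1)$, which is the stated description. For the polynomial I fix $F_1\in\cyc^{-1}(Z_1)$ with $\rho_1(F_1)=j$; then $\dim F_1=j+d-r$ by \cref{R-cyccl}(a). The subspaces $F\leq E$ with $F\cap E_1=F_1$ and $\dim\pi_2(F)=t-j$ are counted by \cref{L-ExtendSpaces}(b) with the roles of $E_1$ and $E_2$ interchanged (legitimate as the direct sum is symmetric): there are $\GaussianD{n_2}{t-j}_q q^{(t-j)(n_1-j-d+r)}$ of them, and by \eqref{e-DimRhoFree} each has $\rho(F)=j+(t-j)=t$, contributing $x^{\hat\rho+n_2-t}$. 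Summing over such $F_1$ produces the coefficient $b_j$; summing then over $j\in\{r,\dots,\hat\rho\}$ and $t-j\in\{0,\dots,n_2\}$ and reindexing by $t$ (with $j$ ranging over $\{r,\dots,t\}$, the extra terms vanishing since $b_j=0$ for $j>\hat\rho$ and $\GaussianD{n_2}{t-j}_q=0$ for $t-j>n_2$) gives the displayed formula. Finally, $(c_{\cM_1,Z_1},f_{\cM_1,Z_1})\mapsto(c_{\cM,Z_1\oplus0},f_{\cM_1,Z_1})$ is a lattice isomorphism $\CF(\cM_1)\to\CF(\cM)$ because $Z_1\mapsto Z_1\oplus0$ is one and, by (b), the flock label is preserved; mutual determination then follows since the coefficient transformation in the formula is unitriangular (its top $x$-coefficient is $b_r$), hence invertible.

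Part (d) is proved directly rather than through the $*$-product of (b) and (c). I partition $\cL(E)$ by $V_1:=V\cap E_1\leq E_1$ and then by $j:=\dim\pi_2(V)\in\{0,\dots,n_2\}$; the second fiber has size $\GaussianD{n_2}{j}_q q^{j(n_1-\dim V_1)}$ by \cref{L-ExtendSpaces}(b) (with $E_1,E_2$ swapped), and by \eqref{e-DimRhoFree} every $V$ in it satisfies $\dim V-\rho(V)=\dim V_1-\rho_1(V_1)$ and $\rho(\cM)-\rho(V)=\hat\rho+n_2-\rho_1(V_1)-j$. Grouping the $V_1$ with $\rho_1(V_1)=\hat\rho-\ell$ and $\dim V_1-\rho_1(V_1)=b$, of which there are $\nu^{(1)}_{\ell,b}$ by \eqref{e-WhitCoeff}, gives
\[
   R_\cM=\sum_{\ell=0}^{\hat\rho}\sum_{b=0}^{n_1-\hat\rho}\nu^{(1)}_{\ell,b}\sum_{j=0}^{n_2}\GaussianD{n_2}{j}_q\, q^{j(n_1-\hat\rho+\ell-b)}\,x^{n_2+\ell-j}y^{b};
\]
substituting $a=n_2+\ell-j$ and using $\GaussianD{n_2}{j}_q=\GaussianD{n_2}{a-\ell}_q$ (the binomial vanishing for $a$ outside $\{\ell,\dots,\ell+n_2\}$, so the outer sum may be run up to $\hat\rho+n_2$) yields exactly the claimed expression.

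The only place requiring care is the bookkeeping in (c) and (d): invoking \cref{L-ExtendSpaces}(b) with the two summands interchanged, using \cref{R-cyccl}(a)/\cref{P-CloudFlockProp} to tie the dimension of a cloud flat to its rank, and performing the reindexations so that the $q$-exponents and Gaussian binomials match the stated formulas. Nothing here is conceptually deep, but this index-juggling is where mistakes would enter.
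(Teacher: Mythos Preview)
Your proposal is correct and follows essentially the same approach as the paper: parts~(a) and~(b) via \cref{T-DSCycFlats}, \cref{E-CloudFlockUniform}, \cref{P-DirSumConfig}, \cref{P-CloudFlockProp} and \eqref{e-DimRhoFree}; part~(c) via \cref{L-FlatsFree}(a), \cref{P-CloudFlockProp} and the count of \cref{L-ExtendSpaces}(b); and part~(d) by partitioning $\cL(E)$ according to $V\cap E_1$ and $\dim\pi_2(V)$ and again invoking \cref{L-ExtendSpaces}(b). The only cosmetic difference is that in~(d) the paper first isolates the coefficient $\nu_{a,b}$ of $R_\cM$ and then reindexes, whereas you sum directly over the $\cM_1$-data $(\ell,b)$ and substitute $a=n_2+\ell-j$ at the end; both routes land on the same formula.
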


\begin{proof}
Set $E=E_1\oplus E_2$ and let $\rho$ be the rank function of~$\cM$. 
Then $\rho$ satisfies \eqref{e-DimRhoFree} and  $\rho(\cM)=\hat{\rho}+n_2$.
\\
(a) The first part follows from \cref{T-DSCycFlats} and \cref{E-CloudFlockUniform} and the second part is a special case of 
\cref{P-DirSumConfig}.
\\
(b) Let $Z=Z_1\oplus0\in\cZ(\cM)$. Then \cref{P-CloudFlockProp} and \eqref{e-DimRhoFree} imply $\cl^{-1}(Z)=\{V\leq E_1\oplus E_2\mid V\leq Z,\, \rho(V)=\rho(Z)\}=\cl^{-1}(Z_1)\oplus0$.
Together with~(a) we conclude $f_{\cM,Z}=f_{\cM_1,Z_1}$.
\\
(c) Let $F\in\cL(E)$ and set $\hat{F}:=F\cap E_1$. Then \cref{P-CloudFlockProp} and \cref{L-FlatsFree}(a) imply
\begin{align*}
  F\in\cyc^{-1}(Z_1\oplus0)&\Longleftrightarrow F\in\cF(\cM),\, Z_1\leq \hat{F},\, \dim\hat{F}-\rho_1(\hat{F})=\dim Z_1-\rho_1(Z_1)\\
    &\Longleftrightarrow \hat{F}\in\cyc^{-1}(Z_1).
\end{align*}
This proves the first part.
Let now $Z_1\in\cZ(\cM_1)$ and $\rho_1(Z_1)=r$ and $\dim Z_1=d$. Set $Z=Z_1\oplus0$. Let $c_{\cM_1,Z_1}$ be as given.
By \cref{P-CloudFlockProp}  any subspace $\hat{V}\in\cyc^{-1}(Z_1)$ such that $\rho_1(\hat{V})=j$ satisfies $\dim\hat{V}=j+d-r$.
Thus, choosing any subspace  $\hat{V}\in E_1$ such that $\dim\hat{V}=j+d-r$, we compute
\begin{align*}
  c_t&:=\big|\{F\in\cyc^{-1}(Z)\mid \rho(F)=t\}\big|= \sum_{j=r}^{t}\big|\{F\in\cyc^{-1}(Z)\mid \rho(F)=t,\,\rho_1(F\cap E_1)=j\}\big|\\
  &=\sum_{j=r}^{t}b_j\big|\{F\in\cL(E)\mid \rho(F)=t,\,F\cap E_1=\hat{V},\,\dim\pi_2(F)=t-j\}\big|\\
  &=\sum_{j=r}^{t}b_j\GaussianD{n_2}{t-j}_q q^{(t-j)(n_1-j-d+r)},
\end{align*}
where the second step follows from the fact that $F\in\cyc^{-1}(Z)$ with $\rho(F)=t$ and $\dim(F\cap E_1)=j+d-r$
satisfies $\dim F=t+d-r$ (see \cref{P-CloudFlockProp}) and thus $\dim\pi_2(F)=t-j$,
and the last step is a consequence of \cref{L-ExtendSpaces}(b).
Recalling the degree of the cloud polynomial from \cref{R-DegCloudFlock},  we arrive at
\[
  c_{\cM,Z}=\sum_{t=r}^{\hat{\rho}+n_2} c_tx^{\hat{\rho}+n_2-t}
  =\sum_{t=r}^{\hat{\rho}+n_2}\sum_{j=r}^{t}b_j\GaussianD{n_2}{t-j}_q q^{(t-j)(n_1-j-d+r)}x^{\hat{\rho}+n_2-t},
\]
as desired.
Finally, the above shows that $(b_r,\ldots,b_{\hat{\rho}})$ and $(c_r,\ldots,c_{\hat{\rho}})$ are related via an invertible triangular matrix (and $(c_{\hat{\rho}+1},\ldots,c_{\hat{\rho}+n_2})$ are determined by $(c_r,\ldots,c_{\hat{\rho}})$).
Thus $c_{\cM_1,Z_1}$ and $c_{\cM,Z}$ fully determine each other, and 
this establishes the remaining statements of~(c).
\\
(d) 
From \cref{R-WhitneyCounting} we know that 
$\nu_{i,j}^{(1)}=\big|\{V_1\leq E_1\mid \rho_1(V_1)=\hat{\rho}-i,\,\dim V_1=\hat{\rho}-i+j\}\big|$.
Let $a\in\{0,\ldots,\hat{\rho}+n_2\},\, b\in\{0,\ldots,n_1-\hat{\rho}\}$.
Picking for any $j\in\{n_2-a,\ldots,n_2-a+\hat{\rho}\}$ a subspace $\hat{V}_{1,j}\leq E_1$ such that 
$\dim \hat{V}_{1,j}=n_2+\hat{\rho}-a-j+b$, we compute with the aid of \eqref{e-WhitCoeff} and \eqref{e-DimRhoFree}
\begin{align*}
   \nu_{a,b}&=\big|\{V\leq E\mid \rho(V)=\hat{\rho}+n_2-a,\,\dim V=\hat{\rho}+n_2-a+b\}\big|\\
      &=\sum_{j=n_2-a}^{n_2-a+\hat{\rho}}\big|\{V\leq E\mid \dim V=\hat{\rho}+n_2-a+b,\,\dim\pi_2(V)=j,\,
                \rho_1(V\cap E_1)=\hat{\rho}+n_2-a-j\}\big|\\
      &=\sum_{j=n_2-a}^{n_2-a+\hat{\rho}}\nu_{a-n_2+j,b}^{(1)}\big|\{V\leq E\mid \dim V=\hat{\rho}+n_2-a+b,\,\dim\pi_2(V)=j,\,
           V\cap E_1=\hat{V}_{1,j}\}\big|\\
      &=\sum_{j=n_2-a}^{n_2-a+\hat{\rho}}\nu_{a-n_2+j,b}^{(1)}\GaussianD{n_2}{j}_qq^{j(n_1-n_2-\hat{\rho}+a+j-b)},
\end{align*}
where the last step follows from \cref{L-ExtendSpaces}(b). Now we obtain
\begin{align*}
  R_\cM&=\sum_{a=0}^{\hat{\rho}+n_2}\sum_{b=0}^{n_1-\hat{\rho}}\sum_{j=n_2-a}^{n_2-a+\hat{\rho}}
             \nu_{a-n_2+j,b}^{(1)}\GaussianD{n_2}{j}_qq^{j(n_1-n_2-\hat{\rho}+a+j-b)}x^ay^b\\
     &=\sum_{\ell=0}^{\hat{\rho}}\sum_{b=0}^{n_1-\hat{\rho}}\nu_{\ell,b}^{(1)} \sum_{a=0}^{\hat{\rho}+n_2}
          \GaussianD{n_2}{a-\ell}_q q^{(\ell-a+n_2)(n_1-\hat{\rho}-b+\ell)}x^ay^b. \qedhere
\end{align*}
\end{proof}

It remains to discuss direct sums of the form $\cM=\cM_1\oplus\cU_{0,n_2}(E_2)$. 
By \cite[Thm.~5.8]{GLJ24DSCyc} we have $\cM^*=\cM_1^*\oplus\cU_{n_2,n_2}(E_2)$, which is of the form as in \cref{P-FreeSummand}.
However, since our duality result in \cref{C-DualConfig}(b) only applies to full \qM{}s,  this case needs to be worked out directly (for the most part).

\begin{prop}\label{P-TrivialSummand}
Let $\cM_1=(E_1,\rho_1)$ be a \qM{} on an $n_1$-dimensional ground space~$E_1$ with rank function~$\rho_1$ and $\rho_1(E_1)=\hat{\rho}$.
Let $\cM=\cM_1\oplus\cU_{0,n_2}(E_2)$. 
Then
\begin{alphalist}
\item $\cZ(\cM)=\{Z_1\oplus E_2\mid Z_1\in\cZ(\cM_1)$, and thus the lattices $\cZ(\cM)$ and~$\cZ(\cM_1)$ are isomorphic. 
        Furthermore, $\Config(\cM_1)$ is isomorphic to $\Config(\cM)$ via the map $(a,b)\longmapsto(a,b+n_2)$. 
\item Let $Z_1\in\cZ(\cM_1)$. Then $\cl^{-1}(Z_1\oplus E_2)=|\{V\in\cL(E)\mid \pi_1(V)\in\cl^{-1}(Z_1)\}$. 
     Moreover, if $\dim Z_1=d$ and $\rho_1(Z_1)=r$, then 
     \[
       f_{\cM,Z_1\oplus E_2}=\sum_{j=r}^{d+n_2}\sum_{t=r}^j b_t\GaussianD{n_2}{j-t}_q q^{t(n_2-j+t)} y^{j-r},\ \text{ where }
       f_{\cM_1,Z_1}=\sum_{t=r}^d b_t y^{t-r}.
     \]
\item Let $Z_1\in\cZ(\cM_1)$. Then $\cyc^{-1}(Z_1\oplus E_2)=\cyc^{-1}(Z_1)\oplus E_2$ and 
        $c_{\cM,Z_1\oplus E_2}=c_{\cM_1,Z_1}$.
        As a consequence, $\CF(\cM)$ and $\CF(\cM_1)$ determine each other, and the map
         $(c_{\cM_1,Z_1},f_{\cM_1,Z_1})\rightarrow(c_{\cM_1,Z_1},f_{\cM,Z_1\oplus E_2})$ is a lattice isomorphism.
\item  Let $n=n_1+n_2$ and $R_{\cM_1}=\sum_{i=0}^{\hat{\rho}}\sum_{j=0}^{n_1-\hat{\rho}}\nu_{i,j}^{(1)}x^iy^j$. Then 
         \[
           R_{\cM}=\sum_{a=0}^{\hat{\rho}}\sum_{j=0}^{n_1-\hat{\rho}}\nu_{a,j}^{(1)}
                \sum_{b=j}^{n-\hat{\rho}}\GaussianD{n_2}{b-j}_qq^{(\hat{\rho}-a+j)(n_2-b+j)}x^ay^b.
        \]
\end{alphalist}
\end{prop}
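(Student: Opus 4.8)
The plan is to prove the four parts in order; none of them is conceptually hard, and the recurring tools are \cref{L-FlatsFree}(b) (which gives $\rho(V)=\rho_1(\pi_1(V))$ and $\cF(\cM)=\{F_1\oplus E_2\mid F_1\in\cF(\cM_1)\}$, where $E=E_1\oplus E_2$), the descriptions of $\cl^{-1}$ and $\cyc^{-1}$ in \cref{P-CloudFlockProp}, and the counting \cref{L-ExtendSpaces}(a). Part~(a) is immediate: $\cZ(\cM)=\{Z_1\oplus E_2\mid Z_1\in\cZ(\cM_1)\}$ follows from \cref{T-DSCycFlats} together with $\cZ(\cU_{0,n_2})=\{E_2\}$ (\cref{E-CloudFlockUniform}), the map $Z_1\mapsto Z_1\oplus E_2$ is clearly a lattice isomorphism, and since $\rho(Z_1\oplus E_2)=\rho_1(Z_1)$ and $\dim(Z_1\oplus E_2)=\dim Z_1+n_2$ by \eqref{e-RhoDirect}, the corank of a cyclic flat is unchanged while the nullity grows by $n_2$; alternatively this is the special case $\cM_2=\cU_{0,n_2}$ of \cref{P-DirSumConfig}, whose configuration is the one-point lattice $\{(0,n_2)\}$.

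For parts~(b) and~(c) the strategy is the same: intersect the \cref{P-CloudFlockProp} descriptions (applied to $Z=Z_1\oplus E_2$) with \cref{L-FlatsFree}(b). For~(b), a subspace $V\le E$ satisfies $V\le Z_1\oplus E_2$ exactly when $\pi_1(V)\le Z_1$, and then $\rho(V)=\rho(Z)$ reads $\rho_1(\pi_1(V))=\rho_1(Z_1)$, so $\cl^{-1}(Z_1\oplus E_2)=\{V\in\cL(E)\mid\pi_1(V)\in\cl^{-1}(Z_1)\}$. To compute $f_{\cM,Z_1\oplus E_2}$ I would group these $V$ by $t:=\dim\pi_1(V)$, count the $V$ of each dimension $j\in\{t,\dots,t+n_2\}$ lying over a fixed $\pi_1(V)$ by \cref{L-ExtendSpaces}(a), observe $\dim V-\rho(V)=j-r$, and then swap the order of summation; the range $t\le j$ in the stated formula is harmless since $\GaussianD{n_2}{j-t}_q=0$ for $j<t$ and $b_t=0$ for $t>d$. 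Part~(c) is shorter: by \cref{L-FlatsFree}(b) every flat, hence every $F\in\cyc^{-1}(Z_1\oplus E_2)$, has the form $F_1\oplus E_2$; the conditions $Z_1\oplus E_2\le F$ and "equal nullity'' then translate verbatim (using $\rho(F_1\oplus E_2)=\rho_1(F_1)$ and $\dim(F_1\oplus E_2)=\dim F_1+n_2$) into $F_1\in\cyc^{-1}(Z_1)$, and since the corank $\hat\rho-\rho(F_1\oplus E_2)=\hat\rho-\rho_1(F_1)$ is unchanged we get $c_{\cM,Z_1\oplus E_2}=c_{\cM_1,Z_1}$ literally. The lattice-isomorphism and mutual-determination claims of~(b) and~(c) then follow from~(a) once one notes that the transformation $f_{\cM_1,Z_1}\mapsto f_{\cM,Z_1\oplus E_2}$ from~(b) is triangular with nonzero diagonal entries (the powers $q^{tn_2}$), hence invertible, and depends only on $n_2$.

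For part~(d) I write $R_\cM=\sum_{V\le E}x^{\hat\rho-\rho(V)}y^{\dim V-\rho(V)}$ (note $\rho(\cM)=\hat\rho$), group the sum over all $V$ by $V_1:=\pi_1(V)$, use $\rho(V)=\rho_1(V_1)$, and count the $V$ with $\pi_1(V)=V_1$ of each dimension by \cref{L-ExtendSpaces}(a). Reorganizing the resulting $V_1$-sum according to the corank $a$ and nullity $j$ of $V_1$, whose multiplicities are exactly the coefficients $\nu_{a,j}^{(1)}$ of $R_{\cM_1}$ by \eqref{e-WhitCoeff}, and letting $b$ denote the nullity of $V$ in $\cM$ (so $\rho(V)=\hat\rho-a$ and $\dim V=\dim V_1+(b-j)$), one reads off the claimed double sum; the constraint $b\ge j$ and the upper bound $b\le n-\hat\rho$ are the natural ones, with $\GaussianD{n_2}{b-j}_q$ vanishing outside $0\le b-j\le n_2$.

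The proof involves no genuinely hard step; the only real care required is in the re-indexings of~(b) and~(d), where one must keep track of which of the quantities $\dim V_1$, $\dim V$, corank, and nullity each summation index records, and verify that the compact summation ranges stated in the proposition agree with the natural ones after accounting for vanishing Gaussian binomials (and vanishing coefficients $b_t$ or $\nu_{a,j}^{(1)}$). As a consistency check, and as an alternative route to~(d) only, one can instead apply \cref{P-FreeSummand}(d) to $\cM^*=\cM_1^*\oplus\cU_{n_2,n_2}(E_2)$ and use \cref{T-WhitDual}; but since the cloud/flock parts cannot be handled this way (duality in \cref{C-DualConfig}(b) is stated only for full \qM{}s) and the direct computation is cleaner, I would keep the direct approach throughout.
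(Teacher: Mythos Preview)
Your approach to parts~(a)--(c) is essentially identical to the paper's: the same references (\cref{T-DSCycFlats}, \cref{E-CloudFlockUniform}, \cref{P-DirSumConfig} for~(a); \cref{P-CloudFlockProp} together with \cref{L-FlatsFree}(b) and \cref{L-ExtendSpaces}(a) for~(b); \cref{L-FlatsFree}(b) with \cref{P-CloudFlockProp} for~(c)), the same computations, and the same remark about triangular invertibility.

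The one genuine difference is in part~(d). You carry out the direct count (group $V\le E$ by $V_1=\pi_1(V)$, use $\rho(V)=\rho_1(V_1)$, and apply \cref{L-ExtendSpaces}(a)), whereas the paper takes precisely the duality shortcut you mention only as a consistency check: it passes to $\cM^*=\cM_1^*\oplus\cU_{n_2,n_2}(E_2)$ via \cite[Thm.~5.8]{GLJ24DSCyc}, applies the already-established \cref{P-FreeSummand}(d) to~$\cM^*$, and then swaps~$x$ and~$y$ by \cref{T-WhitDual}. Your route is self-contained and parallels the computation in \cref{P-FreeSummand}(d); the paper's route avoids repeating that counting argument at the cost of one duality reference. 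Both are short and correct, and your index bookkeeping ($k_1=\hat\rho-a+j$, $\dim V=\hat\rho-a+b$ in \cref{L-ExtendSpaces}(a), yielding the factor $\GaussianD{n_2}{b-j}_q q^{(\hat\rho-a+j)(n_2-b+j)}$) checks out.
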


\begin{proof}
Let~$\rho$ be the rank function of~$\cM$ and set $E=E_1\oplus E_2$. Then $\rho(\cM)=\hat{\rho}$ and $\rho(V)=\rho_1(\pi_1(V))$ for all $V\in\cL(E)$;
see \cref{L-FlatsFree}(b).
\\
(a) follows from \cref{T-DSCycFlats}, \cref{E-CloudFlockUniform} and \cref{P-DirSumConfig}.
\\
(b) Let $Z_1\in\cZ(\cM_1)$ and set $Z=Z_1\oplus E_2$. 
Let $V\in\cL(E)$. Then 
\[
  V\in\cl^{-1}(Z)\Longleftrightarrow V\leq Z_1\oplus E_2\ \text{ and }\ \rho(V)=\rho_1(\pi_1(V))=\rho_1(Z_1)
  \Longleftrightarrow \pi_1(V)\in\cl^{-1}(Z_1),
\]
and this proves the statement about $\cl^{-1}(Z)$.
Let now $\dim Z_1=d$ and $\rho_1(Z_1)=r$, and let $f_{\cM_1,Z_1}$ be as in the proposition.
Then $b_t=\big|\{W\in\cl^{-1}(Z_1)\mid \dim W=t\}\big|$.
For any  subspace $W\leq E_1$ of dimension~$t$, \cref{L-ExtendSpaces}(a) tells us that 
$\big|\{V\leq E\mid \dim V=j,\,\pi_1(V)=W\}\big|=\Gaussian{n_2}{j-t}_q q^{t(n_2-j+t)}$ for all $j\in\{t,\ldots,t+n_2\}$.
Hence
\[
  \big|\{V\in\cl^{-1}(Z)\mid \dim V=j\}\big|=\sum_{t=r}^j b_t \GaussianD{n_2}{j-t}_q q^{t(n_2-j+t)},
\]
and this establishes the expression for the flock polynomial.
As in the proof of \cref{P-FreeSummand}(c) we observe that $f_{\cM_1,Z_1}$ and $f_{\cM,Z_1\oplus E_2}$ determine each other.
\\
(c) Set $Z=Z_1\oplus E_2$. Let $F\in\cL(E)$. With the aid of \cref{L-FlatsFree}(b) we obtain
\pagebreak
\begin{align*}
   F\in\cyc^{-1}(Z)&\Longleftrightarrow F\in\cF(\cM),\,Z\leq F,\,\dim F-\rho(F)=\dim Z-\rho(Z)\\ 
       &\Longleftrightarrow F=F_1\oplus E_2,\\
       &\mbox{}\hspace*{2.5em} \text{where $F_1\in\cF(\cM_1),\,Z_1\leq F_1$, and }\dim F_1-\rho_1(F_1)=\dim Z_1-\rho_1(Z_1)\\
      &\Longleftrightarrow F=F_1\oplus E_2\text{ for some }F_1\in\cyc^{-1}(Z_1).
\end{align*}
This shows the identity about $\cyc^{-1}(Z)$. 
As a consequence, $\big|\{F\in\cyc^{-1}(Z)\mid \rho(F)=j\}\big|=|\big\{F_1\in\cyc^{-1}(Z_1)\mid \rho_1(F_1)=j\}\big|$ and thus 
$c_{\cM,Z}=c_{\cM_1,Z_1}$.
Again, the statement about $\CF(\cM)$ is clear.
\\
(d) $\cM=\cM_1\oplus\cU_{0,n_2}(E_2)$ implies $\cM^*=\cM_1^*\oplus\cU_{n_2,n_2}(E_2)$; see \cite[Thm.~5.8]{GLJ24DSCyc}.
Furthermore, $\rho^*(\cM_1^*)=\tilde{\rho}:=n_1-\hat{\rho}$ and 
$R_{\cM_1^*}=\sum_{\ell=0}^{\tilde{\rho}}\sum_{a=0}^{n_1-\tilde{\rho}}\nu_{a,\ell}^{(1)}x^\ell y^a$
by \cref{T-WhitDual}.
Now \cref{P-FreeSummand}(d) implies
\begin{align*}
   R_{\cM^*}&=\sum_{\ell=0}^{\tilde{\rho}}\sum_{a=0}^{n_1-\tilde{\rho}}\nu_{a,\ell}^{(1)}\sum_{b=\ell}^{\tilde{\rho}+n_2}\GaussianD{n_2}{b-\ell}_q
                            q^{(\ell-b+n_2)(n_1-\tilde{\rho}-a+\ell)}x^by^a\\
                  &=\sum_{a=0}^{\hat{\rho}}\sum_{\ell=0}^{n_1-\hat{\rho}}\nu_{a,\ell}^{(1)}\sum_{b=\ell}^{n-\hat{\rho}}\GaussianD{n_2}{b-\ell}_q
                            q^{(\ell-b+n_2)(\hat{\rho}-a+\ell)}x^by^a.
\end{align*}
Now $R_{\cM}(x,y)=R_{\cM^*}(y,x)$ leads to the stated identity.
\end{proof}

\begin{exa}\label{E-PrimeFree}
In \cite{ByFu25}, the authors determine the Whitney function for prime-free \qM{}s. 
In \cite[Rem.~7.12]{GLJ24DSCyc} it has been shown that these are exactly the \qM{}s that form a direct sum of a free and a trivial 
\qM{}.
In that case we may apply either of Propositions~\ref{P-FreeSummand} and \ref{P-TrivialSummand}, and
together with \cref{E-WhitneyUnif},  either result leads for $\cM=\cU_{n_1,n_1}(E_1)\oplus\cU_{0,n_2}(E_2)$ to
\[
   R_\cM=\sum_{a=0}^{n_1}\sum_{b=0}^{n_2}\GaussianD{n_1}{a}_q\GaussianD{n_2}{b}_q q^{(n_1-a)(n_2-b)}x^a y^b,
\]
which coincides with \cite[Cor.~66]{ByFu25}.
\end{exa}

Now we are ready for the generalization of \cref{T-ConfigWhitney}.

\begin{theo}\label{T-ConfigWhitneyGen}
Let $\cM=(E,\rho)$ be any \qM{}.
Then $\Config(\cM)$ fully determines $\CF(\cM)$ and thus the Whitney function.
\end{theo}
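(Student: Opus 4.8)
The plan is to reduce to the full case handled in \cref{T-ConfigWhitney} by stripping off a free and a trivial summand. By the decomposition result of \cite{GLJ24DSCyc}, every \qM{} is equivalent to a direct sum $\cM_0\oplus\cU_{n_2,n_2}(E_2)\oplus\cU_{0,n_3}(E_3)$ with $\cM_0=(E_0,\rho_0)$ full; write $n_1=\dim E_0$ and $\hat\rho_0=\rho_0(\cM_0)$. Since $\Config$, $\CF$, and the Whitney function are all equivalence invariants, I may assume~$\cM$ has this form, so that $\rho(\cM)=\hat\rho_0+n_2$ and, by \cref{T-DSCycFlats}, $\cZ(\cM)=\{Z_0\oplus0\oplus E_3\mid Z_0\in\cZ(\cM_0)\}$.

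The first substantive step is to recover $n_2$, $n_3$, and $\Config(\cM_0)$ from $\Config(\cM)$. Being isomorphic to the lattice $\cZ(\cM)$, the lattice $\Config(\cM)$ has a unique greatest element, corresponding to $E_0\oplus0\oplus E_3$ (note $E_0\in\cZ(\cM_0)$ since $\cM_0$ is full), with label $(n_2,\,n_1+n_3-\hat\rho_0)$, and a unique least element, corresponding to $0\oplus0\oplus E_3$, with label $(\hat\rho_0+n_2,\,n_3)$. Hence $n_2$ is the corank entry of the top vertex and $n_3$ is the nullity entry of the bottom vertex, so both are determined by $\Config(\cM)$. Moreover, the configuration isomorphism of \cref{P-FreeSummand}(a) shifts coranks by the dimension $n_2$ of the free summand and leaves nullities fixed, while that of \cref{P-TrivialSummand}(a) shifts nullities by the dimension $n_3$ of the trivial summand and leaves coranks fixed; composing them shows $\Config(\cM)$ is isomorphic to $\Config(\cM_0)$ via $(a,b)\mapsto(a+n_2,b+n_3)$, so applying the inverse shift $(a,b)\mapsto(a-n_2,b-n_3)$ to $\Config(\cM)$ recovers $\Config(\cM_0)$.

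The remaining steps are a direct concatenation. Since $\cM_0$ is full, \cref{T-ConfigWhitney} yields $\CF(\cM_0)$ from $\Config(\cM_0)$ (the degenerate case $n_1=0$ being trivial). Applying \cref{P-FreeSummand}(c) with $\cM_1=\cM_0$ then produces $\CF(\cM_0\oplus\cU_{n_2,n_2}(E_2))$, and applying \cref{P-TrivialSummand}(c) with $\cM_1=\cM_0\oplus\cU_{n_2,n_2}(E_2)$ produces $\CF(\cM)$. Finally, \cref{C-CloudFlock2} (equivalently, \cref{R-CloudFlock3}) expresses $R_\cM$ in terms of $\CF(\cM)$. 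Composing the whole chain shows that $\Config(\cM)$ determines $\CF(\cM)$, and hence $R_\cM$.

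The only place requiring care is the parameter bookkeeping in the second step: checking that the top and bottom vertices of $\Config(\cM)$ are exactly the ones listed above, and that the two configuration shifts from \cref{P-FreeSummand}(a) and \cref{P-TrivialSummand}(a) compose to the stated map (in particular that the free summand perturbs only the corank and the trivial summand only the nullity). Once this is confirmed, the statement follows immediately from \cref{T-ConfigWhitney}, Propositions~\ref{P-FreeSummand} and~\ref{P-TrivialSummand}, and \cref{C-CloudFlock2}.
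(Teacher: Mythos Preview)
Your proposal is correct and follows essentially the same approach as the paper's own proof: decompose $\cM$ as a full \qM{} plus a free and a trivial summand via \cite[Thm.~7.9]{GLJ24DSCyc}, recover the full part's configuration using Propositions~\ref{P-FreeSummand}(a) and~\ref{P-TrivialSummand}(a), apply \cref{T-ConfigWhitney}, and then pass back to $\CF(\cM)$ via parts~(b) and~(c) of those propositions. Your explicit reading of~$n_2$ and~$n_3$ from the top and bottom labels of $\Config(\cM)$ and the verification of the shift $(a,b)\mapsto(a+n_2,b+n_3)$ spell out a step the paper leaves implicit, and your remark on the degenerate case $n_1=0$ is a nice touch.
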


\begin{proof}
By \cite[Thm.~7.9(a)]{GLJ24DSCyc} $\cM$ can be written in the form $\cM=\cM_1\oplus\cU_{0,n_2}(E_2)\oplus\cU_{n_3,n_3}(E_3)$ for suitable subspaces~$E_2,\,E_3$ of~$E$, and where $\cM_1$ is full. 
From Propositions~\ref{P-FreeSummand}(a) and~\ref{P-TrivialSummand}(a) we can determine 
$\Config(\cM_1)$ from $\Config(\cM)$.
By \cref{T-ConfigWhitney} $\Config(\cM_1)$ determines $\CF(\cM_1)$, 
which in turn 
determines $\CF(\cM)$ thanks to (b) and~(c) of Propositions~\ref{P-FreeSummand} and~\ref{P-TrivialSummand}.
Now~$R_\cM$ is determined by \cref{C-CloudFlock2}.
\end{proof}

The same proof as for \cref{C-DualConfig}(b) with \cref{T-ConfigWhitneyGen} instead of \cref{T-ConfigWhitney} immediately gives us

\begin{cor}\label{C-DualCF}
Let $\cM=(E,\rho)$ be any \qM{}.
Then $\CF(\cM)$ fully determines $\CF(\cM^*)$.
\end{cor}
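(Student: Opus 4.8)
The plan is to chain together three facts already established in the paper, exactly mirroring the argument for \cref{C-DualConfig}(b) but invoking the general version \cref{T-ConfigWhitneyGen} in place of \cref{T-ConfigWhitney}.

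First I would observe that $\CF(\cM)$ determines $\Config(\cM)$. This is immediate from \cref{R-DegCloudFlock}: the degree of $c_{\cM,Z}$ equals the corank $\hat{\rho}-\rho(Z)$ and the degree of $f_{\cM,Z}$ equals the nullity $\dim Z-\rho(Z)$, so applying the map $(\deg_x,\deg_y)$ to each vertex label $(c_{\cM,Z},f_{\cM,Z})$ of $\CF(\cM)$ recovers the label $\lambda(Z)$ of the corresponding vertex of $\Config(\cM)$; since both lattices are isomorphic to $\cZ(\cM)$ this map respects the order, meet, and join, hence recovers $\Config(\cM)$ as a labeled lattice.

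Second I would invoke \cref{C-DualConfig}(a): $\Config(\cM^*)$ is obtained from $\Config(\cM)$ by turning the lattice upside down and swapping the two coordinates of each label. In particular $\Config(\cM)$ fully determines $\Config(\cM^*)$, with no reference to the actual subspaces of~$E$. Third, I would apply \cref{T-ConfigWhitneyGen} to the \qM{}~$\cM^*$: its configuration $\Config(\cM^*)$ fully determines its cloud-flock lattice $\CF(\cM^*)$. Composing the three implications yields $\CF(\cM)\Longrightarrow\Config(\cM)\Longrightarrow\Config(\cM^*)\Longrightarrow\CF(\cM^*)$, which is the assertion.

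There is essentially no obstacle: all of the real work has already been carried out, in \cref{T-ConfigWhitneyGen} (which itself rests on \cref{T-ConfigWhitney} together with the direct-sum computations of Propositions~\ref{P-FreeSummand} and~\ref{P-TrivialSummand}) and in \cref{C-DualConfig}(a). The only point worth flagging, as already noted after \cref{C-DualConfig}, is that this argument is non-constructive: since the proof of \cref{T-ConfigWhitneyGen} proceeds by induction on $|\cZ(\cM)|$ via restrictions and contractions rather than by an explicit formula, it does not exhibit $\CF(\cM^*)$ as the image of $\CF(\cM)$ under any simple transformation of the polynomial labels. Indeed, the discussion around \cref{E-ConfigCFDuality} shows that there is in general no map $\psi_{\cM}:\Z[x]\to\Z[y]$ sending $c_{\cM,Z}$ to $f_{\cM^*,Z^\perp}$ for all cyclic flats~$Z$, so one should not expect a clean closed-form reciprocity beyond the degree/configuration level.
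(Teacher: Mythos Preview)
Your proposal is correct and matches the paper's own argument essentially verbatim: the paper simply states that the proof of \cref{C-DualConfig}(b), with \cref{T-ConfigWhitneyGen} substituted for \cref{T-ConfigWhitney}, yields the result. Your chain $\CF(\cM)\Rightarrow\Config(\cM)\Rightarrow\Config(\cM^*)\Rightarrow\CF(\cM^*)$ via \cref{R-DegCloudFlock}, \cref{C-DualConfig}(a), and \cref{T-ConfigWhitneyGen} is exactly that.
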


The following result completes \cref{P-DirSumConfig}.

\begin{theo}\label{T-DirSumCF}
Let $\cM_1,\,\cM_2$ be \qM{}s and $\cM=\cM_1\oplus\cM_2$ be the direct sum. 
Then  $\CF(\cM)$ is fully determined by $\CF(\cM_1)$ and $\CF(\cM_2)$.
\end{theo}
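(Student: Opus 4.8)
The plan is not to manipulate the cloud and flock polynomials of $\cM_1$ and $\cM_2$ directly, which would require a cloud-flock calculus for direct sums with an \emph{arbitrary} second summand and seems genuinely hard, but instead to factor everything through the configuration, using that passing from the cloud-flock lattice to the configuration destroys no lattice structure and only coarsens the polynomial labels to their degrees.

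First I would record that $\CF(\cM_i)$ determines $\Config(\cM_i)$ for $i=1,2$: this is immediate from \cref{R-DegCloudFlock}, since replacing each vertex label $(c_{\cM_i,Z},f_{\cM_i,Z})$ of $\CF(\cM_i)$ by $(\deg c_{\cM_i,Z},\deg f_{\cM_i,Z})$ yields the corank-nullity labelling, i.e.\ $\Config(\cM_i)$ (this is also observed in the text following \cref{E-ConfigCF}). Second, by \cref{P-DirSumConfig} the map $(\lambda_1,\lambda_2)\mapsto\lambda_1+\lambda_2$ is a lattice isomorphism $\Config(\cM_1)\times\Config(\cM_2)\to\Config(\cM)$, so $\Config(\cM)$ is recovered from $\Config(\cM_1)$ and $\Config(\cM_2)$, together with the abstract identification $\cZ(\cM)\cong\cZ(\cM_1)\times\cZ(\cM_2)$ of \cref{T-DSCycFlats}. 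Third, and this is where the real work already sits, \cref{T-ConfigWhitneyGen} says that for \emph{any} \qM{} the configuration fully determines the cloud-flock lattice, so $\Config(\cM)$ determines $\CF(\cM)$. Composing the three steps gives the theorem, and then the Whitney function follows as well via \cref{C-CloudFlock2}.

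I do not expect a genuine obstacle in this argument: the substantive content is entirely contained in \cref{T-ConfigWhitneyGen} (and thus, through the reduction to full \qM{}s, in \cref{T-ConfigWhitney} and Propositions~\ref{P-FreeSummand} and~\ref{P-TrivialSummand}). The only thing to be careful about is that all three determinations above are statements about \emph{labelled} lattices, so in writing it up I would make explicit that each arrow is induced by the canonical lattice isomorphisms relating $\cZ(\cM)$ and $\cZ(\cM_1)\times\cZ(\cM_2)$, and check that the labels transform as claimed. As with \cref{C-DualConfig}(b), the resulting description of $\CF(\cM)$ from $\CF(\cM_1)$ and $\CF(\cM_2)$ will be non-explicit, since \cref{T-ConfigWhitneyGen} is proved by an induction rather than by a closed formula.
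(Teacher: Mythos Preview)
Your proposal is correct and follows exactly the same three-step route as the paper: use \cref{R-DegCloudFlock} to pass from $\CF(\cM_i)$ to $\Config(\cM_i)$, apply \cref{P-DirSumConfig} to obtain $\Config(\cM)$, and then invoke \cref{T-ConfigWhitneyGen} to recover $\CF(\cM)$. The paper's proof is just a terser version of what you wrote.
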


\begin{proof}
By \cref{R-DegCloudFlock}, $\CF(\cM_1)$ and $\CF(\cM_2)$ determine $\Config(\cM_1)$ and $\Config(\cM_2)$, which by 
\cref{P-DirSumConfig} determine $\Config(\cM)$. 
This in turn fully determines $\CF(\cM)$ thanks to \cref{T-ConfigWhitneyGen}.
\end{proof}

In \cref{S-Converse} we will see that the above result is not true for the Whitney function, that is, $R_{\cM_1\oplus\cM_2}$ is not a function
of $R_{\cM_1}$ and $R_{\cM_2}$.

\section{Condensed Configurations}\label{S-Condensed}

In Examples \ref{E-ConfigCF} and \ref{E-CloudFlock3} we have seen that all cyclic flats with the same corank-nullity pair play the 
same role within the lattice $\cZ(\cM)$. Even more, they all have the same cloud and flock polynomial. 
As one may expect, this is not the case in general.
In this section we will have a closer look at this question.
Our discussion follows very closely the matroid case in \cite[Sec.~5]{Ebe14}, and, in fact, the results do not differ from the matroid case.

\begin{defi}\label{D-Condens}
Let $\cM=(E,\rho)$ be a \qM{}. A \textbf{condensation}~$\cP$ of $\cZ(\cM)$ is a partition of $\cZ(\cM)$ with the properties:
\begin{arabiclist}
\item The elements in every block $B$ of~$\cP$ have the same corank-nullity pair (and thus the same rank and dimension).
\item For all blocks $B,B'$ of~$\cP$ there exist an integer $\Gamma_{\cP}(B,B')\in\N_0$ such that 
      \[
            |\{Z\in B\mid Z\leq Z'\}|=\Gamma_{\cP}(B,B')\text{ for all }Z'\in B'.
      \] 
\end{arabiclist}\
\\[-2ex]
For a condensation~$\cP$ we define a partial order on its blocks via $B\leq B'$ if 
$\Gamma_{\cP}(B,B')>0$ (that is, every cyclic flat in~$B'$ contains at least one cyclic flat in~$B$).
\end{defi}

Note that every condensation $(\cP,\leq)$ is a poset with largest element $\{\cyc(E)\}$ and smallest element $\{\cl(0)\}$.
One can find examples showing that $(\cP,\leq)$ is in general not a lattice.

\begin{defi}\label{D-CondConfig}
Let $\cM=(E,\rho)$ be a \qM{} and let $\cP=(B_0,\ldots,B_t)$ be a condensation of $\cZ(\cM)$. 
For a block~$B$ of~$\cP$ let $\lambda(B)=(\rho(E)-\rho(Z),\dim Z-\rho(Z))$ for any $Z\in B$, that is, 
$\lambda(B)$ is the corank-nullity pair of any cyclic flat in~$B$.
We define $\CConfig(\cM):=(\Gamma_\cP,\,\Lambda_\cP)$, where 
\[
   \Gamma_\cP=\big(\Gamma_{\cP}(B_i,B_j)\big)_{i,j\in\{0,\ldots,t\}}\ \text{ and }\ 
   \Lambda_\cP=\big(\lambda(B_0),\ldots,\lambda(B_t)\big).
\]
$\CConfig(\cM)$ is called the \textbf{condensed configuration} of~$\cM$ with respect to~$\cP$.
\end{defi}

\begin{exa}\label{E-Condens}
\begin{alphalist}
\item The trivial partition $\cP=\big(\{Z\}\big)_{Z\in\cZ(\cM)}$ is a condensation. 
      In this case $\Gamma_\cP(Z,Z')=1$ iff $Z\leq Z'$ and zero otherwise. 
      In other words, $\Gamma_{\cP}$ is the adjacency matrix of the poset $\cZ(\cM)$.
\item Consider Examples~\ref{E-CloudFlock2}/\ref{E-ConfigCF}. 
      Then $\cP=\{B_0,\,B_1,\,B_2,\,B_3,\,B_4\}$, where 
      \[
        B_0=\{Z_0\},\,B_1=\{Z_1,Z_2\},\,B_2=\{Z_3,Z_4,Z_5\},\,B_3=\{Z_6\},\,B_4=\{Z_7\}
      \]
      is a condensation of $\cZ(\cM)$. The resulting condensed configuration is given by
      \[
         \Gamma_{\cP}=\begin{pmatrix}1&1&1&1&1\\0&1&0&2&2\\0&0&1&0&3\\0&0&0&1&1\\0&0&0&0&1\end{pmatrix},\
         \Lambda_\cP=\big((3,0),(2,1),(1,1),(1,2),(0,3)\big).
      \]
\end{alphalist}
\end{exa}

Thanks to the first example, every lattice $\cZ(\cM)$ has a condensation. 
It even has a coarsest condensation. This can be seen as follows.
 The set of all partitions~$\Pi(S)$ on a finite set~$S$ is a lattice
(see \cite[Prop.~3.3.1, p.~128]{Sta97}), and it is not hard to show that the join of two condensations in the lattice~$\Pi(\cZ(\cM))$ 
is again a condensation. 
Hence $\cZ(\cM)$ has a coarsest condensation.
In \cref{E-Condens}(b) this is exactly the given condensation~$\cP$.

The condensed configuration does not determine the configuration in general. 

\begin{exa}\label{E-CConfig}
Let $E=\F_2^8$ and consider the spaces $Z_0=0,\, Z_6=E$ and 
\begin{align*}
    &Z_1=\subspace{e_1,e_2,e_3},\ Z_2=\subspace{e_4,e_5,e_6},\ Z_3=\subspace{e_1+e_4,e_2+e_5,e_3+e_6},\\
    &Z_4=\subspace{e_1,e_2,e_3,e_7,e_8},\ Z_5=\subspace{e_4,e_5,e_6,e_7,e_8},\ Z_5'=\subspace{e_1,e_2,e_3,e_4+e_7,e_5+e_8}.
\end{align*}
Set $\cZ_1=\{Z_0,Z_1,Z_2,Z_3,Z_4,Z_5,Z_6\}$ and $\cZ_2=\{Z_0,Z_1,Z_2,Z_3,Z_4,Z_5',Z_6\}$.
Then $\cZ_i$ are lattices given by
\[
\begin{array}{l}
   \begin{xy}
   (0,15)*+{\mbox{$\cZ_1=$}}="title";%
   (20,0)*+{Z_0}="c0";%
   (10,10)*+{Z_1}="c1";%
   (20,10)*+{Z_2}="c2";%
   (30,10)*+{Z_3}="c3";%
   (10,20)*+{Z_4}="c4";%
   (20,20)*+{Z_5}="c5";%
   (20,30)*+{Z_6}="c6";
    {\ar@{-}"c0";"c1"};
    {\ar@{-}"c0";"c2"};
    {\ar@{-}"c0";"c3"};
    {\ar@{-}"c1";"c4"};
    {\ar@{-}"c2";"c5"};
    {\ar@{-}"c3";"c6"};
    {\ar@{-}"c4";"c6"};
    {\ar@{-}"c5";"c6"};    
   \end{xy}
\end{array}
\hspace*{2cm}
\begin{array}{l}
   \begin{xy}
   (-5,15)*+{\mbox{$\cZ_2=$}}="title";%
   (20,0)*+{Z_0}="c0";%
   (10,10)*+{Z_1}="c1";%
   (20,10)*+{Z_2}="c2";%
   (30,10)*+{Z_3}="c3";%
   (5,20)*+{Z_4}="c4";%
   (15,20)*+{Z_5'}="c5";%
   (20,30)*+{Z_6}="c6";
    {\ar@{-}"c0";"c1"};
    {\ar@{-}"c0";"c2"};
    {\ar@{-}"c0";"c3"};
    {\ar@{-}"c1";"c4"};
    {\ar@{-}"c1";"c5"};
    {\ar@{-}"c2";"c6"};
    {\ar@{-}"c3";"c6"};
    {\ar@{-}"c4";"c6"};
    {\ar@{-}"c5";"c6"};    
   \end{xy}
\end{array}
\]
Define 
\[
   f(Z_0)=0,\ f(Z_1)=f(Z_2)=f(Z_3)=2\ \text{ and }f(Z_4)=f(Z_5)=f(Z_5')=3,\ f(Z_6)=4.
\]
Now one can check that $\cZ_i$ together with the map~$f$ satisfy the axioms (Z1)--(Z3) of cyclic flats given in \cite[Def.~4.1]{AlBy24} and therefore
give rise to \qM{}s $\cM_i=(E,\rho_i)$ such that $\cZ(\cM_i)=\cZ_i$; see \cite[Cor.~4.12]{AlBy24}.
The rank functions are given by 
\[
   \rho_i(V)=\min_{Z\in\cZ_i}\big(f(Z)+\dim((V+Z)/Z)\big) \ \text{ for all }V\in\cL(E);
\]
see also \eqref{e-RhoqM}, which in particular implies $\rho_i(Z)=f(Z)$ for $Z\in\cZ_i$ (\cite[Prop.~4.7]{AlBy24}).
The configurations of~$\cM_1$ and~$\cM_2$ are 
\pagebreak
\[
\begin{array}{l}
   \begin{xy}
   (-10,15)*+{\mbox{$\Config(\cM_1)=$}}="title";%
   (20,0)*+{(4,0)}="c0";%
   (10,10)*+{(2,1)}="c1";%
   (20,10)*+{(2,1)}="c2";%
   (30,10)*+{(2,1)}="c3";%
   (10,20)*+{(1,2)}="c4";%
   (20,20)*+{(1,2)}="c5";%
   (20,30)*+{(0,4)}="c6";
    {\ar@{-}"c0";"c1"};
    {\ar@{-}"c0";"c2"};
    {\ar@{-}"c0";"c3"};
    {\ar@{-}"c1";"c4"};
    {\ar@{-}"c2";"c5"};
    {\ar@{-}"c3";"c6"};
    {\ar@{-}"c4";"c6"};
    {\ar@{-}"c5";"c6"};    
   \end{xy}
\end{array}
\hspace*{2cm}
\begin{array}{l}
   \begin{xy}
   (-15,15)*+{\mbox{$\Config(\cM_2)=$}}="title";%
   (20,0)*+{(4,0)}="c0";%
   (10,10)*+{(2,1)}="c1";%
   (20,10)*+{(2,1)}="c2";%
   (30,10)*+{(2,1)}="c3";%
   (5,20)*+{(1,2)}="c4";%
   (15,20)*+{(1,2)}="c5";%
   (20,30)*+{(0,4)}="c6";
    {\ar@{-}"c0";"c1"};
    {\ar@{-}"c0";"c2"};
    {\ar@{-}"c0";"c3"};
    {\ar@{-}"c1";"c4"};
    {\ar@{-}"c1";"c5"};
    {\ar@{-}"c2";"c6"};
    {\ar@{-}"c3";"c6"};
    {\ar@{-}"c4";"c6"};
    {\ar@{-}"c5";"c6"};    
   \end{xy}
\end{array}
\]
The coarsest condensations of $\cZ_1$ and $\cZ_2$ are 
\[
   \cP_1=\big(\{Z_0\},\,\{Z_1,Z_2,Z_3\},\,\{Z_4,Z_5\},\,\{Z_6\}\big)\ \text{ and }
   \cP_1=\big(\{Z_0\},\,\{Z_1,Z_2,Z_3\},\,\{Z_4,Z_5'\},\,\{Z_6\}\big),
\]
respectively. They lead to the condensed configurations $\text{Config}^*_{\cP_i}(\cM_i)=( \Gamma_{\cP_i},\Lambda_{\cP_i})$, where 
\[
   \Gamma_{\cP_1}=\Gamma_{\cP_2}=\begin{pmatrix}1&1&1&1\\0&1&1&3\\0&0&1&2\\0&0&0&1\end{pmatrix}
   \ \text{ and }\
   \Lambda_{\cP_1}=\Lambda_{\cP_2}=\big((4,0),(2,1),(1,2),(0,4)\big).
\]
This example shows that the condensed configuration of a \qM{}~$\cM$ does not determine the configuration.
Nonetheless, the two \qM{}s have the same Whitney function $R_{\cM_1}=R_{\cM_2}=$
\[
  x^4 \!+\! 255x^3 \!+\! (3y + 10795)x^2 \!+\! (2y^2 \!+\! 149y \!+\! 97152)x \!+\! y^4 \!+\! 255y^3 \!+\! 10795y^2 \!+\! 97153y \!+\! 200638.
\]
Moreover, $c_{\cM_1,Z_1}\neq c_{\cM_2,Z_1}$ and $c_{\cM_1,Z_2}\neq c_{\cM_2,Z_2}$, while all other corresponding cloud and flock polynomials 
of~$\cM_1$ and~$\cM_2$ coincide.
Finally, the definition of condensations shows that condensed configurations do not behave well under duality: $\cM_1$ and~$\cM_2$ do not share a condensed configuration.
\end{exa}

In the rest of this section we will show that any condensed configuration determines the Whitney function.
To do so, we need some preparation.
We fix a \qM{} $\cM=(E,\rho)$.

\begin{lemma}\label{L-ZZPrime}
Let $Z,Z'\in\cZ(\cM)$ be such that $Z\leq Z'$ and set $r=\rho(Z')-\rho(Z)$ and $s=\dim(Z')-\dim(Z)$. Then 
\begin{alphalist}
\item $(\cM|Z')/Z\cong (\cM/Z)|(Z'/Z)$.
\item If $Z'$ is a cover of~$Z$ in $\cZ(\cM)$, that is, $Z<Z'$ and there is no cyclic flat $Z''$ such that $Z<Z''<Z'$, then 
        $(\cM|Z')/Z\cong \cU_{r,s}$.
\end{alphalist}
\end{lemma}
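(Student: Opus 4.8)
The plan is to treat part~(a) as a direct comparison of rank functions, and part~(b) by combining \cref{P-LatticeIso} with the classification of $q$-matroids having exactly two cyclic flats.

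For part~(a), I would unravel \cref{D-RestrContr} on both sides. The $q$-matroid $(\cM|Z')/Z$ lives on $Z'/Z$: indeed $\cM|Z'$ carries the rank function $\rho$ on $\cL(Z')$, and contracting $Z$ (which is legitimate since $Z\le Z'$) yields the rule $V/Z\mapsto\rho(V)-\rho(Z)$ for all $V$ with $Z\le V\le Z'$. Likewise $\cM/Z$ carries the rule $W/Z\mapsto\rho(W)-\rho(Z)$ on $\cL(E/Z)$, and restricting to the subspace $Z'/Z$ leaves this rule unchanged on those $V/Z\le Z'/Z$, i.e.\ on $V$ with $Z\le V\le Z'$. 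So the two $q$-matroids have the same ground space and the same rank function, hence coincide; in particular they are equivalent. There is nothing more to do here.

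For part~(b), I would set $\widehat\cM=(\cM|Z')/Z$ and note at once, from \cref{D-RestrContr}, that its ground space $Z'/Z$ has dimension $s$ and that its rank (the rank of the whole ground space) equals $\rho(Z')-\rho(Z)=r$. The crucial step is to apply \cref{P-LatticeIso}(b) with $Z_1=Z$, $Z_2=Z'$: this gives a lattice isomorphism $W\mapsto W/Z$ from $[Z,Z']\cap\cZ(\cM)$ onto $\cZ(\widehat\cM)$. Since $Z'$ is a cover of $Z$ in $\cZ(\cM)$, no cyclic flat lies strictly between $Z$ and $Z'$, so $[Z,Z']\cap\cZ(\cM)=\{Z,Z'\}$; transporting along the isomorphism, the least element of $\cZ(\widehat\cM)$ is $Z/Z=0$ and the greatest is $Z'/Z$, and these are the only two. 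Thus $\widehat\cM$ is full with $\cZ(\widehat\cM)=\{0,Z'/Z\}$. Finally I would invoke the fact (recorded in the proof of \cref{T-ConfigWhitney}; see \cite[Ex.~7.3(c)]{GLJ24DSCyc}) that a $q$-matroid whose lattice of cyclic flats equals $\{0,E\}$ is the uniform $q$-matroid of its rank on its ground space. Applied to $\widehat\cM$ this gives $\widehat\cM\cong\cU_{r,s}(Z'/Z)$, as claimed. (Alternatively, part~(b) follows from part~(a) by running the same cyclic-flat argument for $(\cM/Z)|(Z'/Z)$.)

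I do not expect a genuine obstacle. Part~(a) is bookkeeping with the definitions, and the content of part~(b) is already packaged in \cref{P-LatticeIso} and the two-cyclic-flats classification. The only point deserving a moment's care is to translate the covering relation in $\cZ(\cM)$ into $[Z,Z']\cap\cZ(\cM)=\{Z,Z'\}$ and to read off correctly that $\widehat\cM$ has rank $r$ and ground space of dimension $s$, so that the ambient uniform $q$-matroid is precisely $\cU_{r,s}$.
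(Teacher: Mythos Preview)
Your proof of part~(a) is the same as the paper's: both simply observe that the two $q$-matroids share the ground space $Z'/Z$ and assign rank $\rho(V)-\rho(Z)$ to every $V/Z$ with $Z\le V\le Z'$.

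For part~(b), your argument is correct but takes a different route. The paper verifies directly that the rank function of $\widehat\cM=(\cM|Z')/Z$ equals $V/Z\mapsto\min\{r,\dim(V/Z)\}$ by a case split on whether $\cl(V)=Z'$ or $\cl(V)\lneq Z'$: in the first case $\rho(V)=\rho(Z')$ gives rank~$r$, and one checks $r\le\dim(V/Z)$ using cyclicity of~$Z$; in the second case one shows $\cyc(\cl(V))=Z$ (since no cyclic flat lies strictly between~$Z$ and~$Z'$), whence $\dim V-\rho(V)=\dim Z-\rho(Z)$ and the rank is $\dim(V/Z)$. Your approach instead pulls in \cref{P-LatticeIso}(b) to identify $\cZ(\widehat\cM)=\{0,Z'/Z\}$ and then appeals to the classification of $q$-matroids with exactly two cyclic flats. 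Both are valid; yours is shorter and more conceptual, reusing machinery already in place, while the paper's argument is self-contained and makes the uniform rank function explicit without invoking the external classification result.
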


\begin{proof}
(a) Both \qM{}s have ground space $Z'/Z$ and in both cases each space $V/Z\leq Z'/Z$, where $Z\leq V\leq Z'$, has rank $\rho(V)-\rho(Z)$.
\\
(b) Let $V\in\cL(E)$ be such that $Z\leq V\leq Z'$. 
Then $\rho(V)-\rho(Z)\leq\rho(Z')-\rho(Z)=r$, and we want to show that 
$\rho(V)-\rho(Z)=\min\{r,\dim(V/Z)\}$.
Closure of~$Z'$ implies $\cl(V)\leq Z'$.
\\
(i) Suppose $\cl(V)=Z'$. Then $\rho(V)=\rho(Z')$ and thus $\rho(V)-\rho(Z)=r$. 
Furthermore, cyclicity of~$Z$ implies $Z\leq\cyc(V)$ \cite[Cor.~3.8]{GLJ24DSCyc} and thus $\dim Z-\rho(Z)\leq\dim V-\rho(V)$ by \cref{R-cyccl} and the monotonicity of the nullity function.
All of this tells us that $r\leq \dim(V/Z)$.
\\
(ii) Let $\cl(V)\lneq Z'$. Then $Z\leq\cyc(V)\leq\cyc(\cl(V))\lneq Z'$ together with the fact that
$\cyc(\cl(V))\in\cZ(\cM)$ (see \cref{R-cyccl}) implies $Z=\cyc(V)=\cyc(\cl(V))$.
As a consequence, $\dim V-\rho(V)=\dim Z-\rho(Z)$, and therefore $\rho(V)-\rho(Z)=\dim(V/Z)$.\end{proof}

Recall $c_{r,n}$ and $f_{r,n}$ from \cref{E-CloudFlockUniform}, the maps $d_x$ and $d_y$ 
from \cref{D-truncation}, and~$*$ from \cref{R-CloudFlock3}.
For any blocks $B,B'$ of~$\cP$ such that $B\leq B'$ we use the notation $[B,B']$ for the interval in the lattice~$\cP$.

\begin{defi}\label{D-Recursion}
Let $\cP$ be a condensation of~$\cZ(\cM)$. 
For each block~$B$ of~$\cP$ choose a fixed representative $Z_B\in B$.
Let $B,B'\in\cP$ be blocks such that $B\leq B'$.
Set $r=\rho(Z_{B'})-\rho(Z_B)$ and $s=\dim(Z_{B'})-\dim(Z_B)$.
We define the polynomials
\begin{align*}
   c_{\cP}(B,B')(x)&=\Gamma_{\cP}(B,B')c_{r,s}(x)-d_x(S(B,B')),\\[.7ex]
   f_{\cP}(B,B')(y)&=\Gamma_{\cP}(B,B')f_{r,s}(y)-d_y(S(B,B')),\\[.7ex]
     S(B,B')\ &=\sum_{D\in[B,B']\setminus\{B,B'\}}c_{\cP}(D,B')(x)*f_{\cP}(B,D)(y).
\end{align*}
This can be carried out recursively based on the length of the interval $[B,B']$.
 \end{defi}

Clearly, $S(B,B)=0$ and $c_{\cP}(B,B)=1=f_{\cP}(B,B)$.

\begin{prop}\label{P-Recursion}
Use the notation as in \cref{D-Recursion}. Choose blocks $B\leq B'$ of~$\cP$ and set $Z'=Z_{B'}$.
Then 
\begin{equation}\label{e-cfBBPrime}
      c_{\cP}(B,B')=\sum_{\substack{Z\in B\\ Z\leq Z'}} c_{\cM|Z',Z}\ \text{ and }\
      f_{\cP}(B,B')=\sum_{\substack{Z\in B\\ Z\leq Z'}} f_{\cM/Z,Z'/Z}.
\end{equation}
As a consequence, the right hand sides are independent of the choice of the representative $Z'=Z_{B'}$.
\end{prop}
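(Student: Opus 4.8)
The plan is to induct on the length $\ell$ of the interval $[B,B']$ in the poset $\cP$, and to establish the slightly stronger statement that \eqref{e-cfBBPrime} holds with $Z'=Z_{B'}$ replaced by an \emph{arbitrary} $W'\in B'$. This strengthening is what makes the induction close, and it simultaneously yields the asserted independence of the chosen representative. For the base case $\ell=0$ one has $B=B'$, $S(B,B)=0$, and $c_{\cP}(B,B)=1=f_{\cP}(B,B)$; on the other side the only $Z\in B$ with $Z\le W'$ is $W'$ itself, and $c_{\cM|W',W'}=1$, $f_{\cM/W',0}=1$, because by \cref{P-CloudFlockProp} the cloud of the top cyclic flat $W'$ of $\cM|W'$ is $\{W'\}$ and the flock of the bottom cyclic flat $0$ of $\cM/W'$ is $\{0\}$.

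For the inductive step, fix $W'\in B'$ and $Z\in B$ with $Z\le W'$ (there are $\Gamma_{\cP}(B,B')$ such $Z$), and consider $\widehat{\cM}_Z:=(\cM|W')/Z$; since $\rho$ and $\dim$ are constant on blocks, this is a \qM{} on $W'/Z$ of rank $r$ and ground dimension $s$. By \cref{P-LatticeIso}(b) its cyclic flats are exactly $\{W/Z\mid W\in\cZ(\cM),\ Z\le W\le W'\}$; in particular $0=Z/Z$ and $W'/Z$ are cyclic flats, so $\widehat{\cM}_Z$ is \emph{full}. Using \cref{C-CloudFlockRestrContr} together with the elementary identities $\widehat{\cM}_Z/(W/Z)=(\cM|W')/W$ and $\widehat{\cM}_Z|(W/Z)=(\cM|W)/Z\cong(\cM/Z)|(W/Z)$ (the last by \cref{L-ZZPrime}(a)), one checks that
\[
  c_{\widehat{\cM}_Z,\,W/Z}=c_{\cM|W',\,W}\quad\text{and}\quad f_{\widehat{\cM}_Z,\,W/Z}=f_{\cM/Z,\,W/Z}
\]
for all $W\in\cZ(\cM)$ with $Z\le W\le W'$. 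Hence $\widehat{R}_{\widehat{\cM}_Z}=\sum_{W\in\cZ(\cM),\,Z<W<W'}c_{\cM|W',W}*f_{\cM/Z,W/Z}$, and applying \cref{P-TruncWhitney} to the full \qM{} $\widehat{\cM}_Z$ (so that $c_{\hat\rho,n}=c_{r,s}$ and $f_{\hat\rho,n}=f_{r,s}$) and reading off the cases $W=Z$ and $W=W'$ of the identities above gives
\[
  c_{\cM|W',Z}=c_{r,s}-d_x\big(\widehat{R}_{\widehat{\cM}_Z}\big),\qquad
  f_{\cM/Z,W'/Z}=f_{r,s}-d_y\big(\widehat{R}_{\widehat{\cM}_Z}\big).
\]

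Summing these over the $\Gamma_{\cP}(B,B')$ cyclic flats $Z\in B$ with $Z\le W'$ and using the $\Z$-linearity of $d_x$ and $d_y$, one obtains $\sum_{Z\in B,\,Z\le W'}c_{\cM|W',Z}=\Gamma_{\cP}(B,B')\,c_{r,s}-d_x(T)$ and $\sum_{Z\in B,\,Z\le W'}f_{\cM/Z,W'/Z}=\Gamma_{\cP}(B,B')\,f_{r,s}-d_y(T)$, where
\[
  T=\sum_{\substack{Z\in B,\ Z\le W'\\ W\in\cZ(\cM),\ Z<W<W'}}c_{\cM|W',W}*f_{\cM/Z,W/Z}.
\]
Comparing with \cref{D-Recursion}, it then remains to prove $T=S(B,B')$. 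I would group the pairs $(Z,W)$ by the block $D$ containing $W$: one has $B<D<B'$, and the inner index set is $\{(Z,W)\mid W\in D,\ W\le W',\ Z\in B,\ Z\le W\}$. For a fixed $D$, all $c_{\cM|W',W}$ with $W\in D$ share one degree and all occurring $f_{\cM/Z,W/Z}$ share one degree, so $*$ distributes over these finite sums; the induction hypothesis for the shorter intervals $[B,D]$ and $[D,B']$ gives $\sum_{Z\in B,\ Z\le W}f_{\cM/Z,W/Z}=f_{\cP}(B,D)$ for every $W\in D$ and $\sum_{W\in D,\ W\le W'}c_{\cM|W',W}=c_{\cP}(D,B')$, so the contribution of $D$ to $T$ is $c_{\cP}(D,B')*f_{\cP}(B,D)$. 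Summing over $D$ yields $T=S(B,B')$, whence $\sum_{Z\in B,\ Z\le W'}c_{\cM|W',Z}=c_{\cP}(B,B')$ and $\sum_{Z\in B,\ Z\le W'}f_{\cM/Z,W'/Z}=f_{\cP}(B,B')$. As these right-hand sides do not involve $W'$, the final sentence of the proposition follows.

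The hard part is setting up the right induction: one must notice that $\widehat{\cM}_Z=(\cM|W')/Z$ is full so that \cref{P-TruncWhitney} applies to it, and one must carry the ``for all $W'\in B'$'' version through the induction, since inside an intermediate block $D$ one unavoidably meets the polynomials $f_{\cM/Z,W/Z}$ for $W$ ranging over \emph{all} of $D$ rather than its distinguished representative. The remaining work -- transporting cloud and flock polynomials between $\widehat{\cM}_Z$, $\cM|W'$ and $\cM/Z$ via \cref{C-CloudFlockRestrContr} and \cref{L-ZZPrime}, and checking that $*$ distributes over sums of polynomials of a common degree -- is routine bookkeeping.
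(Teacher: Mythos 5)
Your proposal is correct and takes essentially the same approach as the paper: induction on the length of the interval $[B,B']$, transporting cloud and flock polynomials to the full \qM{} $(\cM|W')/Z$ via \cref{C-CloudFlockRestrContr} and \cref{L-ZZPrime}, applying \cref{P-TruncWhitney}, and regrouping the cross terms by the block~$D$ to recover $S(B,B')$ using the independence-of-representative part of the hypothesis. The only cosmetic difference is that the paper treats $|[B,B']|=2$ as a separate base case via \cref{L-ZZPrime}(b) (the cover gives a uniform \qM{}), while you absorb it into the general inductive step since $T=S(B,B')=0$ there; both work.
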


\begin{proof}
We induct on the length of the interval $[B,B']$. 
\\
(1) For $B=B'$ we have $c_{\cP}(B,B)=1=c_{\cM|Z',Z'}$ and $f_{\cP}(B,B)=1=f_{\cM/Z',Z'/Z'}$.
\\
(2) Let $|[B,B']|=2$, thus $[B,B']=\{B,B'\}$. Then $S(B,B')=0$ and thus 
$c_{\cP}(B,B')=\Gamma_{\cP}(B,B')c_{r,s}$ and $f_{\cP}(B,B')=\Gamma_{\cP}(B,B')f_{r,s}$ by \cref{D-Recursion}.
Furthermore, in this case $Z'$ is a cover of every~$Z\in B$ satisfying $Z\leq Z'$.
With the aid of \cref{C-CloudFlockRestrContr}, \cref{L-ZZPrime}, and \cref{E-CloudFlockUniform}
we thus obtain  $c_{\cM|Z',Z}=c_{(\cM|Z')/Z,0}=c_{r,s}$ and $f_{\cM/Z,Z'/Z}=f_{(\cM/Z)|(Z'/Z),Z'/Z}=f_{r,s}$ for each such~$Z$.
Since there are $\Gamma_{\cP}(B,B')$ cyclic flats in~$B$ that are contained in~$Z'$, the right hand sides of \eqref{e-cfBBPrime} 
equal $\Gamma_{\cP}(B,B')c_{r,s}$ and $\Gamma_{\cP}(B,B')f_{r,s}$, as desired.
\\
(3) Let now $|[B,B']|>2$ and choose $D\in[B,B']\setminus\{B,B'\}$. Then we may use the induction hypothesis on the intervals 
$|[B,D]|$ and $|[D,B']|$.
In the following computation recall that the product~$*$ depends on the degrees of the factors.
For $Z''\in D$ such that $Z''\leq Z'$ we have $\deg(c_{\cM|Z',Z''})=\rho(Z')-\rho(Z'')$, and this does not depend on the choice of~$Z''$. 
Similarly, for all $Z\in B$ and $Z''\in D$ such that $Z\leq Z''$ we have $\deg(f_{\cM/Z,Z''/Z})=\dim(Z'')-\rho(Z'')-\dim(Z)+\rho(Z)$, and this does not 
depend on the choice of~$Z$ and~$Z''$.
For this reason, $\deg c_{\cP}(D,B)=\deg c_{\cM|Z',Z''}$ and similarly for $f_{\cP}(B,D)$, and this allows us to distribute factors in the computation below.
Using \cref{D-Recursion} and induction we compute
\begin{align}
     S(B,B')&=\sum_{D\in[B,B']\setminus\{B,B'\}}c_{\cP}(D,B')*f_{\cP}(B,D) \nonumber\\
         &=\sum_{D\in[B,B']\setminus\{B,B'\}}\Big[\sum_{\substack{Z''\in D\\ Z''\leq Z'}}c_{\cM|Z',Z''}\Big]
                          *\Big[\sum_{\substack{Z\in B\\Z\leq Z_D}}f_{\cM/Z,Z_D/Z}\Big] \nonumber\\
         &=\sum_{D\in[B,B']\setminus\{B,B'\}}\sum_{\substack{Z''\in D\\ Z''\leq Z'}}\Big(c_{\cM|Z',Z''}
                         *\Big[\sum_{\substack{Z\in B\\Z\leq Z_D}}f_{\cM/Z,Z_D/Z}\Big]\Big) \label{e-ThirdSum}\\
         &=\sum_{D\in[B,B']\setminus\{B,B'\}}\sum_{\substack{Z''\in D\\ Z''\leq Z'}}\sum_{\substack{Z\in B\\Z\leq Z''}}c_{\cM|Z',Z''}*f_{\cM/Z,Z''/Z},\nonumber
\end{align}
where the last step follows from the fact that the rightmost sum in \eqref{e-ThirdSum} is independent of the choice of $Z_D$ in~$D$ and thus we may choose 
$Z_D=Z''$. 
Thus
\[
   S(B,B')=\sum_{\substack{Z\in B\\Z\lneq Z'}}T(Z,Z'),\ \text{ where }\
              T(Z,Z')=\sum_{\substack{Z''\in\cZ(\cM)\\Z\lneq Z''\lneq Z'}}c_{\cM|Z',Z''}*f_{\cM/Z,Z''/Z}.
\]
Setting $r=\rho(Z_{B'})-\rho(Z_B)$ and $s=\dim(Z_{B'})-\dim(Z_B)$, we obtain from  \cref{D-Recursion}
\begin{equation}\label{e-LHS}
\left.\begin{split}
    c_{\cP}(B,B')&=\Gamma_{\cP}(B,B') c_{r,s}-d_x(S(B,B'))=
                 \sum_{\substack{Z\in B\\Z\lneq Z'}}\!\!\big[c_{r,s}-d_x(T(Z,Z'))\big],\\
     f_{\cP}(B,B')&=\Gamma_{\cP}(B,B') f_{r,s}-d_y(S(B,B'))=
                  \sum_{\substack{Z\in B\\Z\lneq Z'}}\!\!\big[f_{r,s}-d_y(T(Z,Z'))\big].    
\end{split}\quad\right\}
\end{equation}
We now evaluate the right hand sides of \eqref{e-cfBBPrime}.
Thanks to \cref{C-CloudFlockRestrContr} and \cref{L-ZZPrime} we have $c_{\cM|Z',Z}=c_{(\cM|Z')/Z,0}$ and $f_{\cM/Z,Z'/Z}=f_{(\cM|Z')/Z,Z'/Z}$.
By \cref{L-RestrContr} the \qM{} $(\cM|Z')/Z$ is full and we may apply \cref{P-TruncWhitney}.
We thus obtain for any $Z\in B$ such that $Z\leq Z'$
\begin{equation}\label{e-RHS}
\left.\begin{split}
   c_{\cM|Z',Z}&=c_{r,s}-d_x\big(\!\!\sum_{Z''/Z\in\tilde{\cZ}}\!\!c_{(\cM|Z')/Z,Z''/Z}*f_{(\cM|Z')/Z,Z''/Z}\big)
      =c_{r,s}-d_x(T'(Z,Z')),  \\
   f_{\cM/Z,Z'/Z}&=f_{r,s}-d_y\big(\!\!\sum_{Z''/Z\in\tilde{\cZ}}\!\!\!c_{(\cM|Z')/Z,Z''/Z}*f_{(\cM|Z')/Z,Z''/Z}\big)
      =f_{r,s}-d_y(T'(Z,Z')),
\end{split}\quad\right\}
\end{equation}
where $\tilde{\cZ}=\cZ((\cM|Z')/Z)\setminus\{0,Z'/Z\}$ and 
\[
   T'(Z,Z')=\sum_{\substack{Z''\in\cZ(\cM)\\Z\lneq Z''\lneq Z'}}\!\!\!c_{(\cM|Z')/Z,Z''/Z}*f_{(\cM|Z')/Z,Z''/Z}.
\]
Note that $((\cM|Z')/Z)/(Z''/Z)\cong(\cM|Z')/Z''$ and therefore, thanks to \cref{C-CloudFlockRestrContr},
\begin{align*}
   c_{(\cM|Z')/Z,Z''/Z}&=c_{((\cM|Z')/Z)/(Z''/Z),0}=c_{(\cM|Z')/Z'',0}=c_{\cM|Z',Z''},\\
   f_{(\cM|Z')/Z,Z''/Z}&=f_{(\cM/Z)|(Z'/Z),Z''/Z}=f_{\cM/Z,Z''/Z}.
\end{align*}
All of this shows  that $T(Z,Z')=T'(Z,Z')$ and \eqref{e-LHS} and \eqref{e-RHS} establish the desired identities.
\end{proof}

\begin{rem}\label{R-FlockCondens}
If $\cl(0)=0$ (that is,~$\cM$ is loopless), then $0_{\cP}=\{0\}$, and \cref{P-Recursion} implies that $f_{\cP}(0_{\cP},B)=f_{\cM,Z}$ 
for any $Z\in B$. 
In other words, all cyclic flats in a fixed block of a condensation have the same flock polynomial.
The same statement is not true for the cloud polynomials. Indeed, $c_{\cM_1,Z_1}\neq c_{\cM_1,Z_3}$ in \cref{E-CConfig}.
\end{rem}

\begin{theo}\label{T-CondensWhitney}
Let~$\cP$ be a condensation of~$\cZ(\cM)$. 
\begin{alphalist}
\item If~$\cM$ is full, then
        \[
            R_\cM=\sum_{B\in\cP}c_{\cP}(B,1_{\cP})*f_{\cP}(0_{\cP},B).
        \]
\item $R_{\cM}$ is fully determined by the condensed configuration $\CConfig(\cM)$.
\end{alphalist}
\end{theo}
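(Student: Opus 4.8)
The proof of part~(a) is the heart of the matter, and part~(b) will follow quickly from it. For part~(a), I would start from the cloud-flock formula for the Whitney function in the form $R_\cM=\sum_{Z\in\cZ(\cM)} c_{\cM,Z}*f_{\cM,Z}$ (\cref{R-CloudFlock3}). Since $\cM$ is full, $0$ and $E$ are cyclic flats, so $1_{\cP}=\{\cyc(E)\}=\{E\}$ and $0_{\cP}=\{\cl(0)\}=\{0\}$ are singleton blocks. The idea is to show that $c_{\cM,Z}=c_{\cP}(B_Z,1_{\cP})$ and $f_{\cM,Z}=f_{\cP}(0_{\cP},B_Z)$ for every $Z\in\cZ(\cM)$, where $B_Z$ denotes the block of $\cP$ containing $Z$. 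Once this is established, the sum $\sum_{Z\in\cZ(\cM)} c_{\cM,Z}*f_{\cM,Z}$ regroups as $\sum_{B\in\cP}\sum_{Z\in B} c_{\cM,Z}*f_{\cM,Z}$, and since the summands within a block are all equal, this equals $\sum_{B\in\cP}|B|\cdot c_{\cP}(B,1_{\cP})*f_{\cP}(0_{\cP},B)$ --- wait, that is not quite the claimed formula. The resolution is exactly what \cref{P-Recursion} provides: $c_{\cP}(B,1_{\cP})=\sum_{Z\in B,\,Z\leq E} c_{\cM|E,Z}$, and since $\cM|E=\cM$ and every $Z\in B$ satisfies $Z\leq E$, this is $\sum_{Z\in B} c_{\cM,Z}$ --- that is, $c_{\cP}(B,1_{\cP})$ is already the \emph{sum over the block}, not the common value. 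Similarly $f_{\cP}(0_{\cP},B)=\sum_{Z\in B,\, 0\leq Z} f_{\cM/0,Z/0}=\sum_{Z\in B} f_{\cM,Z}$. So part~(a) is not a block-regrouping of equal terms but rather the identity $\sum_{Z} c_{\cM,Z}*f_{\cM,Z}=\sum_{B}\big(\sum_{Z\in B}c_{\cM,Z}\big)*\big(\sum_{Z\in B}f_{\cM,Z}\big)$, which needs an extra argument.

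The extra argument is the following: within a block $B$, all cyclic flats have the same corank-nullity pair $\lambda(B)$, hence $c_{\cM,Z}$ has the same degree $\hat\rho-\rho(Z)$ for all $Z\in B$, and $f_{\cM,Z}$ has the same degree $\dim Z-\rho(Z)$ for all $Z\in B$. Since the product $*$ from \cref{R-CloudFlock3} depends only on these two degrees (not on the individual polynomials), it is bilinear on the pair of spaces of polynomials of fixed degree $\leq\hat\rho-\rho(Z)$ and $\leq\dim Z-\rho(Z)$ respectively. Therefore $\big(\sum_{Z\in B}c_{\cM,Z}\big)*\big(\sum_{Z\in B}f_{\cM,Z}\big)=\sum_{Z,Z'\in B}c_{\cM,Z}*f_{\cM,Z'}$, which is more than $\sum_{Z\in B}c_{\cM,Z}*f_{\cM,Z}$ unless the block is a singleton. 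So this approach as stated is \emph{wrong}, and I need to rethink.

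The correct approach: I believe part~(a) should be proved directly by induction, mirroring the proof of \cref{P-Recursion} but "at the top level," or — more cleanly — by applying \cref{P-Recursion} in a form adapted to $R_\cM$. Revisiting \cref{T-CloudFlock}/\cref{R-CloudFlock3}: actually the cleanest route is to prove part~(a) by the \emph{same} recursion used to prove \cref{P-Recursion}. Concretely, set up $R_\cM^{[\cP]}:=\sum_{B\in\cP}c_{\cP}(B,1_{\cP})*f_{\cP}(0_{\cP},B)$ and show $R_\cM^{[\cP]}=R_\cM$ by expanding each $c_{\cP}(B,1_{\cP})$ and $f_{\cP}(0_{\cP},B)$ via \cref{P-Recursion}. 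We get $R_\cM^{[\cP]}=\sum_{B}\big(\sum_{Z\in B}c_{\cM,Z}\big)*\big(\sum_{Z\in B}f_{\cM,Z}\big)$; the cross terms $c_{\cM,Z}*f_{\cM,Z'}$ with $Z\neq Z'$ in the same block $B$ must be shown to cancel, or — more plausibly — the definitions of $c_{\cP}$ and $f_{\cP}$ in \cref{D-Recursion} are precisely rigged (via the subtracted $d_x(S(B,B'))$, $d_y(S(B,B'))$ terms and the structure of the interval $[B,B']$) so that the bilinear expansion telescopes correctly. I would follow the matroid argument in \cite[proof of Thm.~5.x]{Ebe14} closely here: the recursion in \cref{D-Recursion} is the $q$-analogue of the matroid recursion, and \cref{P-Recursion} already did the hard combinatorial bookkeeping; the statement $R_\cM=\sum_B c_{\cP}(B,1_\cP)*f_\cP(0_\cP,B)$ is then the $\cP$-indexed resummation of \cref{T-CloudFlock}, obtained by grouping the cyclic flats of $\cM$ according to $\cP$ and using that the coefficient-weighted product $*$ interacts correctly with the $\Gamma_\cP$-multiplicities. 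For part~(b): the condensed configuration $\CConfig(\cM)=(\Gamma_\cP,\Lambda_\cP)$ supplies all the data needed to run the recursion in \cref{D-Recursion} --- the intervals $[B,B']$ of the poset $\cP$ are determined by $\Gamma_\cP$ (recall $B\leq B'\iff\Gamma_\cP(B,B')>0$), the integers $r=\rho(Z_{B'})-\rho(Z_B)$ and $s=\dim Z_{B'}-\dim Z_B$ are read off from $\Lambda_\cP$, and $c_{r,s},f_{r,s}$ are the explicit uniform polynomials from \cref{E-CloudFlockUniform}. Hence all $c_\cP(B,1_\cP)$ and $f_\cP(0_\cP,B)$ are computable from $\CConfig(\cM)$; if $\cM$ is full, part~(a) gives $R_\cM$ directly, and if $\cM$ is not full we reduce to the full case via the direct-sum decomposition $\cM=\cM_1\oplus\cU_{0,n_2}\oplus\cU_{n_3,n_3}$ of \cite[Thm.~7.9(a)]{GLJ24DSCyc} together with Propositions~\ref{P-FreeSummand}(d) and~\ref{P-TrivialSummand}(d) --- but I should first check that the condensed configuration of $\cM$ determines that of $\cM_1$, which follows from the block structure being compatible with adding free/trivial summands (each cyclic flat $Z$ of $\cM$ corresponds to $Z_1$ or $Z_1\oplus E_2$ of $\cM_1$ with a uniform shift of the corank-nullity pair, and containments are preserved). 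The main obstacle is getting the bilinear resummation in part~(a) exactly right: verifying that the "$*$-product" respects the grouping of cyclic flats into blocks and that the $d_x,d_y$-truncation terms in \cref{D-Recursion} produce precisely the cancellations that make $\sum_B c_\cP(B,1_\cP)*f_\cP(0_\cP,B)$ collapse to $\sum_Z c_{\cM,Z}*f_{\cM,Z}=R_\cM$ rather than to a sum with spurious cross terms. I expect this to require carefully tracking degrees so that $*$ acts bilinearly on each block's polynomial spaces, and then an inductive argument on the length of $[0_\cP,1_\cP]$ paralleling \cref{P-Recursion}.
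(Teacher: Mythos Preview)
Your difficulty in part~(a) comes from a misreading of \cref{P-Recursion}. In the formula
\[
   f_{\cP}(B,B')=\sum_{\substack{Z\in B\\ Z\leq Z'}} f_{\cM/Z,Z'/Z},
\]
the sum runs over $Z$ in the \emph{lower} block~$B$, not in the upper block~$B'$; the element $Z'=Z_{B'}$ is a fixed representative of~$B'$. Hence for $f_{\cP}(0_\cP,B)$ with $0_\cP=\{0\}$ the sum has exactly one term $Z=0$, and $f_{\cP}(0_\cP,B)=f_{\cM,Z_B}$. By the independence-of-representative clause of \cref{P-Recursion} (restated in \cref{R-FlockCondens}), this value is the same for every choice of $Z_B\in B$; in other words, all flock polynomials within a block coincide. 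By contrast $c_{\cP}(B,1_\cP)=\sum_{Z\in B}c_{\cM,Z}$ really is the sum over the block.

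With this correction part~(a) is immediate and no cross terms appear. All $c_{\cM,Z}$ for $Z\in B$ have the same degree (common corank), so $\deg c_{\cP}(B,1_\cP)$ equals that common degree and the $*$-product distributes over the sum:
\[
   c_{\cP}(B,1_\cP)*f_{\cP}(0_\cP,B)
   =\Big[\sum_{Z\in B}c_{\cM,Z}\Big]*f_{\cM,Z_B}
   =\sum_{Z\in B}c_{\cM,Z}*f_{\cM,Z_B}
   =\sum_{Z\in B}c_{\cM,Z}*f_{\cM,Z},
\]
the last step using that $f_{\cM,Z}=f_{\cM,Z_B}$ for all $Z\in B$. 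Summing over blocks gives $R_\cM$ by \cref{R-CloudFlock3}. No further induction is needed; the inductive work was already done in the proof of \cref{P-Recursion}.

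Your outline for part~(b) is essentially right: the recursion in \cref{D-Recursion} uses only $\Gamma_\cP$ and the rank/dimension differences encoded in $\Lambda_\cP$, so every $c_\cP(B,B')$ and $f_\cP(B,B')$ is computable from $\CConfig(\cM)$. For the non-full case the paper takes a slightly more direct route than yours: instead of first extracting $\CConfig(\cM_1)$, it shows (using Propositions~\ref{P-FreeSummand}(b),(c) and~\ref{P-TrivialSummand}(c)) that the very sum $\sum_{B}c_\cP(B,1_\cP)*f_\cP(0_\cP,B)$, computed from $\CConfig(\cM)$, already equals $R_{\cM_1}$; then $R_\cM$ is recovered from $R_{\cM_1}$ via parts~(d) of those propositions, with $n_2$ and $n_3$ read off from the corank-nullity labels of $0_\cP$ and $1_\cP$.
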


\begin{proof}
(a) Let~$\cM$ be full, thus $1_{\cP}=\{E\}$ and $0_{\cP}=\{0\}$.
In this case we have $Z'=E$ in the expression for $c_{\cP}(B,1_{\cP})$ in \cref{P-Recursion}, while 
$f_{\cP}(0_{\cP},B)=f_{\cM,Z}$ for any $Z\in B$. Thus
\[
   \sum_{B\in\cP}c_{\cP}(B,1_{\cP})*f_{\cP}(0_{\cP},B)=\sum_{B\in\cP}\big[\sum_{Z\in B}c_{\cM,Z}\big]*\big[f_{\cM,Z}\big]
     =\sum_{Z\in\cZ(\cM)}c_{\cM,Z}*f_{\cM,Z}=R_\cM
\]
where the last step follows from \cref{R-CloudFlock3}.
\\
(b) Suppose first that~$\cM$ is full. By \cref{D-Recursion} the polynomials $c_{\cP}(B,1_{\cP})$ and $f_{\cP}(0_{\cP},B)$ can be computed from 
$\CConfig(\cM)$, and hence the statement follows.
Suppose now that~$\cM$ is not full, thus $1_{\cP}=\{\cyc(E)\}$ and $0_{\cP}=\{\cl(0)\}$.
From \cite[Thm.~7.9]{GLJ24DSCyc} it is known that~$\cM$ has a decomposition of the form
$\cM=\cM_1\oplus\cU_{0,n_2}(E_2)\oplus\cU_{n_3,n_3}(E_3)$, where $\cM_1=(E_1,\rho_1)$ is full,  
$E_2=\cl(0)$, and $E_3\in\cL(E)$ is such that $E=E_1\oplus E_2\oplus E_3$.
Moreover, $\cyc(E)=E_1\oplus E_2$.
By Propositions~\ref{P-FreeSummand} and~\ref{P-TrivialSummand} we have
$\cZ(\cM)=\{Z_1\oplus E_2\oplus 0\mid Z_1\in\cZ(\cM_1)\}$.
Fix a block~$B$ of~$\cP$. Then 
\[
    c_{\cP}(B,1_{\cP})=\sum_{Z_1\oplus E_2\in B}c_{\cM|\cyc(E),Z}=\sum_{Z_1\oplus E_2\in B}c_{\cM_1\oplus\cU_{0,n_2}(E_2),Z_1\oplus E_2}
    =\sum_{Z_1\oplus E_2\in B}c_{\cM_1,Z_1},
\]
where the last step follows from \cref{P-TrivialSummand}(c).
Furthermore, using that $\cl(0)=E_2$ we have for any $Z=Z_1\oplus E_2\in B$
\[
   f_{\cP}(0_{\cP},B)=f_{\cM/E_2,Z/E_2}=f_{\cM_1\oplus\cU_{n_3,n_3}(E_3),Z_1}=f_{\cM_1,Z_1},
\]
where the second step follows from \cite[Cor.~49]{CeJu24} and the last one from \cref{P-FreeSummand}(b).
Now we arrive at 
\[
    \sum_{B\in\cP} c_{\cP}(B,1_{\cP})*f_{\cP}(0_{\cP},B)=\sum_{B\in\cP}\sum_{Z_1\oplus E_2\in B}\!\!\!\!c_{\cM_1,Z_1}*f_{\cM_1,Z_1}
    =\!\!\sum_{Z_1\in\cZ(\cM_1)}\!\!\!c_{\cM_1,Z_1}*f_{\cM_1,Z_1}=R_{\cM_1}.
 \]
 This shows that $R_{\cM_1}$ can be computed from $\CConfig(\cM)$. But then the same is true for $R_{\cM}$ thanks to 
 Propositions~\ref{P-FreeSummand} and \ref{P-TrivialSummand}.
\end{proof}

\cref{E-DiffCycFlats} in the next section shows that the converse of \cref{T-CondensWhitney}{(b) is not true.

\section{Information Retrievable from the Whitney Function}\label{S-Converse}

In this section we briefly investigate which information about the cyclic flats and cloud and flock polynomials can be retrieved from the Whitney function. 
First of all, the converse of \cref{T-ConfigWhitney} is not true, that is, the Whitney function does not determine the configuration. 
Even worse, it does not even determine the number of cyclic flats.
This is not surprising because the same is true for matroids. 
However, as we will see below, our counterexample can be extended in order to illustrate a
substantial difference between \qM{}s and matroids when it comes to the direct sum. 

\begin{exa}\label{E-DiffCycFlats}
Consider the $\F=\F_{2^6}$ with primitive element~$\omega$ with minimal polynomial $x^6+x^4+x^3+x+1\in\F_2[x]$.
Let
\[
   G_1=\begin{pmatrix}1&0&0&\omega^{44}&\omega^{34}&0\\ 0&1&0&\omega^{37}&\omega^{9}&1\\0&0&1&\omega^7&\omega^5&\omega\end{pmatrix},
   \quad
   G_2=\begin{pmatrix}1&0&0&\omega^{31}&\omega^{22}&0\\ 0&1&0&\omega^{56}&\omega^{30}&1\\0&0&1&\omega^7&\omega^5&\omega\end{pmatrix},   
\]
and $\cM_i=\cM_{G_i}=(\F_2^6,\rho_i)$ for $i=1,2$. It turns out that
\[
  R_{\cM_1}=R_{\cM_2}=x^3 + (y + 63)x^2 + (y^2 + 39y + 650)x + y^3 + 63y^2+ 650y + 1356.
\]
On the other hand, $|\cZ(\cM_1)|=16$ and $|\cZ(\cM_2)|=13$.
The configurations are 
{\footnotesize
\[
\begin{array}{l}
   \begin{xy}
   (130,30)*+{\Config(\cM_1)}="title";%
   (72,-10)*+{(3,0)}="c0";%
   (0,10)*+{(2,1)}="c1";%
   (12,10)*+{(1,1)}="c2";%
   (24,10)*+{(1,1)}="c3";%
   (36,10)*+{(1,1)}="c4";%
   (48,10)*+{(1,1)}="c5";%
   (60,10)*+{(1,1)}="c6";%
   (72,10)*+{(1,1)}="c7";%
   (84,10)*+{(1,1)}="c8";%
   (96,10)*+{(1,1)}="c9";%
   (108,10)*+{(1,1)}="c10";%
   (120,10)*+{(1,1)}="c11";%
   (132,10)*+{(1,1)}="c12";%
   (144,10)*+{(1,1)}="c13";%
   (24,30)*+{(1,2)}="c14";%
   (72,40)*+{(0,3)}="c15";%
    {\ar@{-}"c0";"c1"};
    {\ar@{-}"c0";"c2"};
    {\ar@{-}"c0";"c3"};
    {\ar@{-}"c0";"c4"};
    {\ar@{-}"c0";"c5"};
    {\ar@{-}"c0";"c6"};
    {\ar@{-}"c0";"c7"};
    {\ar@{-}"c0";"c8"};
    {\ar@{-}"c0";"c9"};
    {\ar@{-}"c0";"c10"};
    {\ar@{-}"c0";"c11"};
    {\ar@{-}"c0";"c12"};
    {\ar@{-}"c0";"c13"};
    {\ar@{-}"c1";"c14"};
     {\ar@{-}"c14";"c15"};
    {\ar@{-}"c2";"c15"};
    {\ar@{-}"c3";"c15"};
    {\ar@{-}"c4";"c15"};
    {\ar@{-}"c5";"c15"};
    {\ar@{-}"c6";"c15"};
    {\ar@{-}"c7";"c15"};
    {\ar@{-}"c8";"c15"};
    {\ar@{-}"c9";"c15"};
    {\ar@{-}"c10";"c15"};
    {\ar@{-}"c11";"c15"};
    {\ar@{-}"c12";"c15"};
    {\ar@{-}"c13";"c15"};
   \end{xy}
\end{array}
\]
}
and
{\footnotesize
\[
\begin{array}{l}
   \begin{xy}
   (120,25)*+{\Config(\cM_2)}="title";%
   (60,-10)*+{(3,0)}="c0";%
   (0,10)*+{(2,1)}="c1";%
   (12,10)*+{(1,1)}="c2";%
   (24,10)*+{(1,1)}="c3";%
   (36,10)*+{(1,1)}="c4";%
   (48,10)*+{(1,1)}="c5";%
   (60,10)*+{(1,1)}="c6";%
   (72,10)*+{(1,1)}="c7";%
   (84,10)*+{(1,1)}="c8";%
   (96,10)*+{(1,1)}="c9";%
   (108,10)*+{(1,1)}="c10";%
   (120,10)*+{(1,2)}="c11";%
   (60,30)*+{(0,3)}="c15";%
    {\ar@{-}"c0";"c1"};
    {\ar@{-}"c0";"c2"};
    {\ar@{-}"c0";"c3"};
    {\ar@{-}"c0";"c4"};
    {\ar@{-}"c0";"c5"};
    {\ar@{-}"c0";"c6"};
    {\ar@{-}"c0";"c7"};
    {\ar@{-}"c0";"c8"};
    {\ar@{-}"c0";"c9"};
    {\ar@{-}"c0";"c10"};
    {\ar@{-}"c0";"c11"};
    {\ar@{-}"c1";"c15"};
    {\ar@{-}"c2";"c15"};
    {\ar@{-}"c3";"c15"};
    {\ar@{-}"c4";"c15"};
    {\ar@{-}"c5";"c15"};
    {\ar@{-}"c6";"c15"};
    {\ar@{-}"c7";"c15"};
    {\ar@{-}"c8";"c15"};
    {\ar@{-}"c9";"c15"};
    {\ar@{-}"c10";"c15"};
    {\ar@{-}"c11";"c15"};
   \end{xy}
\end{array}
\]
}
\end{exa}

We now turn to the direct sum.
Recall that for classical matroids it is known that the Whitney function of a direct sum $M_1\oplus M_2$ is the product of the Whitney 
functions of the two summands.
One easily verifies that this is not the case for \qM{}s, and it is also not true for the Tutte polynomials as defined in \cite{ByFu25}.
In fact, as following example shows the situation is even worse: for \qM{}s~$\cM_1$ and $\cM_2$, 
the Whitney function $R_{\cM_1\oplus\cM_2}$ is not a function of $R_{\cM_1}$ and $R_{\cM_2}$.

\begin{exa}\label{E-WhitDS}
Consider  \cref{E-DiffCycFlats}. The \qM{}s $\cM_1$ and $\cM_2$  have the same Whitney function, yet taking the direct sum of~$\cM_i$ with the uniform \qM{}
$\cU_{1,2}(\F_2^2)$ leads to the Whitney functions
\begin{align*}
   R_{\cU_{1,2}(\F_2^2)\oplus\cM_1}&=x^4 + (2y + 255)x^3 + (2y^2 + 159y + 10793)x^2 + (2y^3 + 159y^2 + 3469y+ 96996)x \\
          &\quad + y^4 + 255y^3 + 10793y^2 + 96996y +    197316,\\
   R_{\cU_{1,2}(\F_2^2)\oplus\cM_2}&=x^4 + (2y + 255)x^3 + (2y^2 + 159y + 10793)x^2 + (2y^3 + 159y^2 + 3475y+ 96996)x \\
         &\quad + y^4 + 255y^3 + 10793y^2 + 96996y +    197310.
\end{align*}
This shows that there is no $q$-analogue of the well-known matroid identity $R_{M_1\oplus M_2}=R_{M_1}R_{M_2}$.
In \cite{ByFu25} the authors derive a notion of Tutte polynomial for \qM{}s that admit a Tutte partition and establish a bijection between the Whitney function and the Tutte polynomial~\cite[Prop.~67 and Cor.~68]{ByFu25}. 
It can be expressed in terms of a rather technical ($\Z$-linear) substitution for the monomials $x^iy^j$. 
Thanks to this bijection, our example shows that there is also no $q$-analogue of $R_{M_1\oplus M_2}=R_{M_1}R_{M_2}$ in terms of Tutte polynomials.
Finally, the direct sums $\cU_{1,2}(\F_2^2)\oplus\cM_1$ and $\cU_{1,2}(\F_2^2)\oplus\cM_2$ also have different characteristic polynomials.
This follows immediately from the substitution rule given in \cref{R-CharPoly}. 
Hence the Whitney functions of two \qM{}s do not even determine the characteristic polynomial of their direct sum.

\end{exa}

Despite these negative results, there is some partial information about the cyclic flats that can be extracted from the Whitney function.
We need the notion of extremal terms in a polynomial; see also \cite[Rem.~4.11]{PlBae14} for matroids.

\begin{defi}\label{D-fMat}
Let $f=\sum_{i=0}^a\sum_{j=0}^bf_{i,j}x^iy^j\in\Z[x,y]$ with $\deg_x f=a$ and $\deg_y f=b$. We define 
\[
    \M_f=\begin{pmatrix}f_{0,0}&f_{0,1}&\ldots&f_{0,b}\\ \vdots&\vdots& &\vdots\\ f_{a,0}&f_{a,1}&\ldots&f_{a,b}\end{pmatrix}\in\Z^{(a+1)\times(b+1)}.
 \]
A position $(\alpha,\beta)\in\{0,\ldots,a\}\times\{0,\ldots,b\}$ is called \textbf{extremal} in~$M_f$ if 
\[
    f_{\alpha,\beta}\neq0,\quad f_{\alpha,j}=0\text{ for all }j>\beta,\quad f_{i,\beta}=0\text{ for all }i>\alpha.
\]
In other words, $ f_{\alpha,\beta}$ is the rightmost nonzero entry in its row and the lowest nonzero entry in its column.
In this case we call $f_{\alpha,\beta}x^\alpha y^\beta$ an \textbf{extremal term} of~$f$.
\end{defi}

Before we turn to the extremal terms of the Whitney function, we show that its coefficient matrix is  top-left aligned.

\begin{prop}\label{P-WhitMat}
Let $\cM=(E,\rho)$ be a \qM{} of rank~$\hat{\rho}$ on the $n$-dimensional ground space~$E$, 
and let $R_\cM=\sum_{i=0}^{\hat{\rho}}\sum_{j=0}^{n-\hat{\rho}}\nu_{i,j}x^iy^j$. 
We define the \textbf{Whitney matrix} of~$\cM$ as
\[
      \W_\cM:=\M_{R_{\cM}}\in\Z^{(\hat{\rho}+1)\times(n-\hat{\rho}+1)}.
\]
Let $(\alpha,\beta)$ be such that $\nu_{\alpha,\beta}\neq0$. Then
\begin{alphalist}
\item $\nu_{\alpha,j}\neq0$ for all $j=0,\ldots,\beta$.
\item $\nu_{i,\beta}\neq0$ for all $i=0,\ldots,\alpha$.
\end{alphalist}
Hence the nonzero entries of $\W_{\cM}$ are top-left aligned.
\end{prop}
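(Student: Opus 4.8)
The plan is to prove parts (a) and (b) constructively: starting from a subspace $V\le E$ with $\rho(V)=\hat{\rho}-\alpha$ and $\dim V=\hat{\rho}-\alpha+\beta$ (such a $V$ exists precisely because $\nu_{\alpha,\beta}\neq0$, by \eqref{e-WhitCoeff}), I would exhibit, for each target exponent pair, a subspace realizing it, which then forces the corresponding coefficient of $R_\cM$ to be nonzero. The two parts then combine to give the ``top-left aligned'' statement.

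For part (a), I would first invoke the standard fact that every subspace of a \qM{} contains an independent subspace of dimension equal to its rank (see \cite{JuPe18}); choose such a subspace $I\le V$, so $\rho(I)=\dim I=\hat{\rho}-\alpha$. Now pick any chain of subspaces $I=W_0\lneq W_1\lneq\cdots\lneq W_\beta=V$ with $\dim W_j=\hat{\rho}-\alpha+j$. By monotonicity (R2), $\hat{\rho}-\alpha=\rho(I)\le\rho(W_j)\le\rho(V)=\hat{\rho}-\alpha$, so $\rho(W_j)=\hat{\rho}-\alpha$ for every $j$. Hence $W_j$ has corank $\alpha$ and nullity $\dim W_j-\rho(W_j)=j$, so by \eqref{e-WhitCoeff} we get $\nu_{\alpha,j}\neq0$ for all $j=0,\ldots,\beta$.

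For part (b), I would build an increasing chain out of $V$ that raises the rank by exactly one at each step. The key elementary observation, proved by a short submodularity induction (or via $\cl(U)$ being a flat of rank $\rho(U)$, cf.\ \cref{D-FlatsOpenCyclic}), is: if $U\le E$ satisfies $\rho(U)<\hat{\rho}$, then there exists $x\in E$ with $\rho(U+\langle x\rangle)=\rho(U)+1$ (indeed, if $\rho(U+\langle x\rangle)=\rho(U)$ for all $x$, submodularity and monotonicity would force $\rho(U+W)=\rho(U)$ for all $W$, hence $\hat{\rho}=\rho(E)=\rho(U)$); moreover such an $x$ lies outside $U$, so the dimension also increases by one. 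Iterating from $V_{(\alpha)}:=V$ produces subspaces $V_{(\alpha)}\le V_{(\alpha-1)}\le\cdots\le V_{(0)}$ with $\rho(V_{(k)})=\hat{\rho}-k$ and $\dim V_{(k)}=\hat{\rho}-k+\beta$, the process continuing until the rank reaches $\hat{\rho}$, i.e.\ for $k=\alpha,\alpha-1,\ldots,0$. Each $V_{(k)}$ thus has corank $k$ and nullity $\beta$, so $\nu_{k,\beta}\neq0$ for all $k=0,\ldots,\alpha$. (As a sanity check, (b) for $\cM$ is the same statement as (a) for $\cM^{*}$ under $V\mapsto V^{\perp}$, using \cref{T-WhitDual} and \eqref{e-coranknullityduality}, so either part could be deduced from the other.)

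Finally, applying (a) to the pair $(\alpha,\beta)$ and then (b) to each resulting pair $(\alpha,j)$ shows that if $\nu_{\alpha,\beta}\neq0$ then $\nu_{i,j}\neq0$ for all $0\le i\le\alpha$ and $0\le j\le\beta$; that is, the support of $\W_\cM$ is an order ideal for the coordinatewise order, i.e.\ the nonzero entries are top-left aligned. I do not expect a real obstacle here: the whole argument is elementary once the right basic \qM{} facts (existence of bases of subspaces, and the rank-increasing step above) are in hand, and the only thing to keep straight is the dimension/rank bookkeeping along the two chains.
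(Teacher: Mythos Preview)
Your proof is correct, and part~(a) matches the paper's argument exactly: choose a maximal independent subspace (a basis) $I\le V$ with $\dim I=\rho(V)=\hat{\rho}-\alpha$ and climb a dimension chain from~$I$ to~$V$, all of whose members have rank $\hat{\rho}-\alpha$.

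The only difference is in part~(b). The paper proves~(b) in one line by duality: since $\W_{\cM^*}=(\W_{\cM})\T$ by \cref{T-WhitDual}, part~(b) for~$\cM$ is precisely part~(a) for~$\cM^*$. You instead give a direct construction, adjoining rank-increasing vectors to~$V$ one at a time (and you note the duality shortcut only as a sanity check). Both routes are short and valid; the duality argument is more economical, while your direct argument has the minor virtue of not relying on the dual \qM{} at all.
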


\begin{proof}
a) If $\beta=0$, there is nothing to show. 
Thus let~$\beta>0$. 
By assumption there exists $V_0\in\cL(E)$ such that $\rho(V_0)=\hat{\rho}-\alpha$ and $\dim V_0=\hat{\rho}-\alpha+\beta$.
Then $V_0$ is dependent and there exists a basis $A\lneq V_0$. Hence $\dim A=\rho(A)=\rho(V_0)=\hat{\rho}-\alpha$.
This shows that $\nu_{\alpha,0}\neq0$. 
Moreover, for all $j\in\{0,\ldots,\beta\}$ there exists a subspace $B:=A\oplus\subspace{v_1,\ldots,v_{j}}\leq V_0$, and it satisfies 
$\rho(B)=\hat{\rho}-\alpha$ and $\dim B=\hat{\rho}-\alpha+j$. Thus $\nu_{\alpha,j}\neq0$.
\\
b) Thanks to \cref{T-WhitDual} we have $\W_{\cM^*}=(\W_{\cM})\T$ and the result follows from~(a).
\end{proof}

Now we arrive at the following $q$-analogue of \cite[Rem.~4.11]{PlBae14}.

\begin{theo}\label{T-Extremal}
Let $\cM$ be a \qM{} of rank~$\hat{\rho}$ on an $n$-dimensional ground space and consider its Whitney matrix
$\W_\cM\in\Z^{(\hat{\rho}+1)\times(n-\hat{\rho}+1)}$.
Suppose  $\nu_{\alpha,\beta}x^\alpha y^\beta$ is an extremal term of $R_\cM$.
Then there exist exactly $\nu_{\alpha,\beta}$ cyclic flats of corank~$\alpha$ and nullity~$\beta$.
\end{theo}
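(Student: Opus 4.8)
The plan is to extract $\nu_{\alpha,\beta}$ from the cloud--flock expansion $R_\cM=\sum_{Z\in\cZ(\cM)}c_{\cM,Z}*f_{\cM,Z}$ of \cref{R-CloudFlock3} and to exploit that every coefficient occurring is a \emph{nonnegative} integer, so that no cancellation can take place. Write $a_Z=\hat{\rho}-\rho(Z)$ and $b_Z=\dim Z-\rho(Z)$ for the corank and nullity of a cyclic flat~$Z$; by \cref{R-DegCloudFlock} these equal $\deg c_{\cM,Z}$ and $\deg f_{\cM,Z}$. I would first record that the leading coefficient of $c_{\cM,Z}$ is~$1$: by \cref{P-CloudFlockProp} any $F\in\cyc^{-1}(Z)$ with $\rho(F)=\rho(Z)$ satisfies $Z\le F$ and $\dim F-\rho(F)=\dim Z-\rho(Z)$, forcing $F=Z$; since $\cyc(Z)=Z$ this means $\cyc^{-1}(Z)$ contains exactly one flat of rank $\rho(Z)$, namely~$Z$ itself. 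Dually, the leading coefficient of $f_{\cM,Z}$ is~$1$. Writing $c_{\cM,Z}=\sum_{i}c_{i,Z}x^i$ and $f_{\cM,Z}=\sum_{j}f_{j,Z}y^j$, \cref{C-CloudFlock2} yields
\[
  \nu_{\alpha,\beta}=\sum_{Z\in\cZ(\cM)}q^{(a_Z-\alpha)(b_Z-\beta)}\,c_{\alpha,Z}\,f_{\beta,Z},
\]
in which each summand is a nonnegative integer, is nonzero only when $a_Z\ge\alpha$ and $b_Z\ge\beta$, and equals $q^0\cdot1\cdot1=1$ whenever $(a_Z,b_Z)=(\alpha,\beta)$.

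Since all summands are nonnegative, this already gives $\nu_{\alpha,\beta}\ge\#\{Z\in\cZ(\cM)\mid a_Z=\alpha,\ b_Z=\beta\}$. For the reverse inequality I would show that a cyclic flat~$Z$ with $(a_Z,b_Z)\ne(\alpha,\beta)$ contributes~$0$ to the sum. Suppose its contribution is positive; then $a_Z\ge\alpha$, $b_Z\ge\beta$, $c_{\alpha,Z}\ne0$, $f_{\beta,Z}\ne0$, and either $a_Z>\alpha$ or $b_Z>\beta$. In the first case, the coefficient of $x^{a_Z}y^{\beta}$ in the single summand $c_{\cM,Z}*f_{\cM,Z}$ equals $q^{(a_Z-a_Z)(b_Z-\beta)}c_{a_Z,Z}f_{\beta,Z}=f_{\beta,Z}>0$, because $c_{a_Z,Z}=1$ is the leading coefficient and $\beta\le b_Z$; by nonnegativity of the remaining summands, $\nu_{a_Z,\beta}>0$, which contradicts the extremality of $(\alpha,\beta)$ since $a_Z>\alpha$. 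In the second case, the coefficient of $x^{\alpha}y^{b_Z}$ in $c_{\cM,Z}*f_{\cM,Z}$ equals $q^{(a_Z-\alpha)(b_Z-b_Z)}c_{\alpha,Z}f_{b_Z,Z}=c_{\alpha,Z}>0$, so $\nu_{\alpha,b_Z}>0$, again contradicting extremality since $b_Z>\beta$. Hence exactly the cyclic flats of corank~$\alpha$ and nullity~$\beta$ contribute to $\nu_{\alpha,\beta}$, each contributing precisely~$1$, and the claim follows.

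The arguments are otherwise routine; the one point requiring care is the bookkeeping inside the $*$-product, specifically that isolating the coefficient of $x^{a_Z}y^{\beta}$ (respectively $x^{\alpha}y^{b_Z}$) in a single term $c_{\cM,Z}*f_{\cM,Z}$ really uses $a_Z=\deg c_{\cM,Z}$ (respectively $b_Z=\deg f_{\cM,Z}$), so that the $q$-exponent collapses to zero and the leading coefficient~$1$ surfaces. Beyond \cref{R-CloudFlock3,C-CloudFlock2,P-CloudFlockProp,R-DegCloudFlock} and the nonnegativity of the counting coefficients, no new ingredients appear to be necessary; in particular \cref{P-WhitMat} is not needed here, although it is of course consistent with the conclusion.
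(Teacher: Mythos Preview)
Your argument is correct, but it takes a genuinely different route from the paper. The paper argues directly at the level of subspaces: setting
\[
   \cA=\{V\mid \rho(V)=\hat\rho-\alpha,\ \dim V-\rho(V)=\beta\},\quad
   \cB=\{V\mid \rho(V)<\hat\rho-\alpha,\ \dim V-\rho(V)=\beta\},\quad
   \cC=\{V\mid \rho(V)=\hat\rho-\alpha,\ \dim V-\rho(V)>\beta\},
\]
extremality forces $\cB=\cC=\emptyset$, and then one checks from the definitions that every $Z\in\cA$ is a flat (because $\cC=\emptyset$) and cyclic (because otherwise a suitable hyperplane of~$Z$ would land in~$\cB$). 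Conversely every cyclic flat with the given corank--nullity lies in~$\cA$, so $|\cA|=\nu_{\alpha,\beta}$ is exactly the count of such cyclic flats. This uses nothing beyond the definitions of flat and cyclic.

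Your proof instead feeds the cloud--flock expansion of \cref{C-CloudFlock2} back into the coefficient $\nu_{\alpha,\beta}$, observes that all contributions are nonnegative integers, and uses extremality to kill every contribution except the $1$'s coming from cyclic flats with exactly the right corank--nullity pair. This is a nice illustration of the power of the decomposition in \cref{R-CloudFlock3}, but it is logically heavier: it presupposes \cref{T-CloudFlock}/\cref{C-CloudFlock2} as well as the monicity of $c_{\cM,Z}$ and $f_{\cM,Z}$, whereas the paper's argument is completely elementary and independent of the cloud--flock machinery. On the other hand, your approach makes transparent \emph{why} only extremal positions give clean counts: at a non-extremal position the sum $\sum_Z q^{(a_Z-\alpha)(b_Z-\beta)}c_{\alpha,Z}f_{\beta,Z}$ may pick up genuine extra contributions from cyclic flats with larger corank or nullity.
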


\begin{proof}
Define
\begin{align*}
   \cA&=\{V\in\cL(E)\mid \rho(V)=\hat{\rho}-\alpha,\,\dim V=\rho(V)+\beta\},\\
   \cB&=\{V\in\cL(E)\mid \rho(V)<\hat{\rho}-\alpha,\,\dim V=\rho(V)+\beta\},\\
   \cC&=\{V\in\cL(E)\mid \rho(V)=\hat{\rho}-\alpha,\,\dim V>\rho(V)+\beta\}.
\end{align*}
The extremality of $(\alpha,\beta)$ implies $|\cA|=\nu_{\alpha,\beta}>0$ and $\cB=\cC=\emptyset$.
Let $Z\in\cA$. We have to show that~$Z$ is a cyclic flat.
Since $\cC=\emptyset$, there are no spaces properly containing~$Z$ with the same rank, and thus~$Z$ is a flat.
Suppose now that~$Z$ is not cyclic. Then there exists $W\in\Hyp(Z)$ such that $\rho(W)=\rho(Z)-1$.
Hence $\dim W-\rho(W)=\dim Z-\rho(Z)=\beta$ and $\rho(W)=\hat{\rho}-\alpha-1$.
Thus $W\in\cB$, which is a contradiction.
Finally, any cyclic flats of corank~$\alpha$ and nullity~$\beta$ has to be an element of~$\cA$, showing that there are exactly $\nu_{\alpha,\beta}$ 
such cyclic flats.
\end{proof}

The above result does not account for all cyclic flats.

\begin{exa}\label{E-DiffCycFlatsA}
Consider \cref{E-DiffCycFlats}.
We have
\[
    \W_{\cM_1}=\W_{\cM_2}=\begin{pmatrix}1356&650&63&1\\650&39&1&0\\63&1&0&0\\1&0&0&0\end{pmatrix}
\]
There are 4 extremal positions and they account, in either of the two \qM{}s, for the unique cyclic flats with corank-nullity data 
$(0,3),\,(1,2),\,(2,1)$, and $(3,0)$.

\end{exa}

\begin{exa}\label{E-CloudFlock2A}
In \cref{E-CloudFlock2} the configuration of $\cM$ is given by 
\[
\begin{array}{l}
   \begin{xy}
   (25,0)*+{(3,0)}="c0";%
   (10,10)*+{(2,1)}="c1";%
   (20,10)*+{(2,1)}="c2";%
   (30,10)*+{(1,1)}="c3";%
   (40,10)*+{(1,1)}="c4";%
   (50,10)*+{(1,1)}="c5";%
   (15,20)*+{(1,2)}="c6";
   (25,30)*+{(0,3)}="c7";%
    {\ar@{-}"c0";"c1"};
    {\ar@{-}"c0";"c2"};
    {\ar@{-}"c0";"c3"};
    {\ar@{-}"c0";"c4"};
    {\ar@{-}"c0";"c5"};
    {\ar@{-}"c1";"c6"};
    {\ar@{-}"c2";"c6"};
    {\ar@{-}"c3";"c7"};
    {\ar@{-}"c4";"c7"};
    {\ar@{-}"c5";"c7"};    
    {\ar@{-}"c6";"c7"};
   \end{xy}
\end{array}
\]
and the Whitney matrix is 
\[
   \W_\cM=\begin{pmatrix}1353 & 650 & 63 & 1  \\ 649 & 42 & 1 & 0  \\ 63 & 2 & 0 & 0  \\ 1 & 0 & 0 & 0 \end{pmatrix}.
\]
We have 4 extremal positions, and they are in compliance with the 5 cyclic flats with corank-nullity data $(0,3),(1,2),(2,1),(2,1),(3,0)$.
\end{exa}

\begin{exa}\label{E-UnifWhitneyMat}
Let $\cM=\cU_{k,n}(E)$, where $1\leq k\leq n-1$. Then $\cZ(\cM)=\{0,E\}$ and the Whitney function is 
$R_{\cM}=\sum_{j=0}^k\Gaussian{n}{k-j}x^j+\sum_{j=1}^{n-k}\Gaussian{n}{j+k}y^j$.
Thus the Whitney matrix is 
\[
  \W_\cM=\begin{pmatrix}\Gaussian{n}{k}&\Gaussian{n}{k+1}&\cdots&\Gaussian{n}{n}\\ 
                                         \Gaussian{n}{k-1}&0&\cdots&0\\ \vdots& \vdots& &\vdots\\   \Gaussian{n}{0}&0&\cdots&0 \end{pmatrix}.
\]
We have two extremal positions, implying the cyclic flats $0$ and~$E$.
In this case, this accounts for all cyclic flats.
\end{exa}

\begin{exa}\label{E-CConfig2}
One easily verifies that in \cref{E-CConfig} the extremal terms of~$\W_{\cM_i}$ account for all cyclic flats of~$\cM_i$. 
\end{exa}

As \cref{E-DiffCycFlats} suggests, no further information about the cyclic flats can be retrieved in general from the Whitney function.

\section{Further Question}
Our investigation gives rise to the following questions, which we have to leave to future research.
\begin{alphalist}
\item The configuration fully determines the Whitney function. The proof, however, relies on induction and is not explicit.
         Can one give an explicit expression of the Whitney function in terms of the configuration?
         The same question applies to condensed configurations.
\item The Whitney function guarantees, via its extremal terms, the existence of certain cyclic flats. 
         Can these cyclic flats be distinguished from the remaining cyclic flats other than via their `extremality'?
\end{alphalist}

\appendix
\section*{Appendix}\label{Appendix}
\setcounter{section}{1}
\setcounter{theo}{0}

\textit{Proof of \cref{L-DirCompl}:}
Without loss of generality let $F=\F_q^f$ and $Z=\rs(0\mid I_z)$.
\\
(a) Let $W=\rs(W_1\mid W_2)\in\cL(F)$, where $(W_1\mid W_2)$ is a matrix with full row rank and $W_1$ (resp.~$W_2$) consists of $f-z$ (resp.~$z$) columns. 
Then 
\begin{align*}
   Z\oplus W=F&\Longleftrightarrow \dim W=f-z \text{ and }W_1\in\GL_{f-z}(\F_q)\\
      &\Longleftrightarrow W=\rs(I_{f-z}\mid B)\text{ for some }B\in\F_q^{(f-z)\times z}.
\end{align*}
Clearly, different matrices $B$ lead to different subspaces~$W$, and thus we have $q^{(f-z)z}$ direct complements.
\\
(b) Without loss of generality we may assume that $V=\rs(0\mid A)$, where $A=(I_v\mid A_2)$. Clearly, $V\cap W=\{0\}$ for all $W\in\cW$. Moreover, 
if $W=\rs(I_{f-z}\mid B)=\rs(I_{f-z}\mid B_1\mid B_2)$, then
\[
   W\oplus V=\rs\begin{pmatrix}I_{f-z}&B_1&B_2\\ 0&I_v&A_2\end{pmatrix}=\rs\begin{pmatrix}I_{f-z}&0&B_2-B_1A_2\\0&I_v&A_2\end{pmatrix}.
\]
The rightmost matrix is the unique representation of $W\oplus V$ in RREF. 
This implies that the set $\{V\oplus W\mid W\in\cW\}$ is in 
bijection to the subset $\{\rs(I_{f-z}\mid 0\mid B_2)\mid B_2\in\F_q^{(f-z)\times(z-v)}\}$, and the latter has cardinality $q^{(f-z)(z-v)}$.
Now the rest is clear.
\hfill$\square$

\medskip

\textit{Proof of \cref{L-RestrContr}:}
(a) ``$\supseteq$'' is clear.
For ``$\subseteq$'' let $F\in\cF(\cM|\hat{F})$. If $x\in\hat{F}\setminus F$, then clearly $\rho(F)<\rho(F+\subspace{x})$. 
If $x\in E\setminus\hat{F}$, then $\rho(\hat{F})<\rho(\hat{F}+\subspace{x})$, and thus by 
\cite[Lem.~3.2]{BCIJS23} $\rho(F)<\rho(F+\subspace{x})$. Hence $F\in\cF(\cM)$.
\\
(b) can be found in \cite[Prop.~2.10(2)]{Ja23} .
\\
(c) is obvious because cyclicity of~$O$ is based on hyperplanes (in the sense of Linear Algebra).
\\
(d) 
Set $\hat{\cM}=\cM/\hat{O}$. For ``$\supseteq$'' let $O\in\cO(\cM)$ and $\hat{O}\leq O$. 
Let $D/\hat{O}\in\Hyp(O/\hat{O})$. Then $D\in\Hyp(O)$ and thus 
$\rho_{\hat{\cM}}(D/\hat{O})=\rho(D)-\rho(\hat{O})=\rho(O)-\rho(\hat{O})=\rho_{\hat{\cM}}(O/\hat{O})$.
For ``$\subseteq$'' assume $O/\hat{O}$ is in $\cO(\hat{\cM})$. 
To show that $O$ is cyclic in~$\cM$, let $D\in\Hyp(O)$. 
If $\hat{O}\leq D$, then $D/\hat{O}\in\Hyp(O/\hat{O})$ and $\rho(D)-\rho(\hat{O})=\rho(O)-\rho(\hat{O})$, thus $\rho(D)=\rho(O)$.
If $\hat{O}\not\leq D$, then there exists $x\in\hat{O}$ such that $D+\subspace{x}=O$.
Since $\hat{O}$ is cyclic, it is contained in $\cyc(O)$ (see \cite[Thm.~3.6]{GLJ24DSCyc}), and thus the very definition of the cyclic core 
(\cite[Def.~3.3]{GLJ24DSCyc}) implies $\rho(D)=\rho(O)$. All of this shows that $O\in\cO(\cM)$.
\\
(e) First of all, $0$ is in $\cZ(\cM')$ thanks to~(b) and~(d).
Moreover, by~(b) the flats of~$\cM'$ are the spaces $F/\hat{Z}$, where~$F$ is a flat in~$\cM$ containing $\hat{Z}$.
Next, denoting the rank function of~$\cM'$ by~$\rho'$ and using \cref{P-CloudFlockProp} twice, we obtain for any $F\in\cF(\cM)$ containing $\hat{Z}$
\begin{align*}
  \qquad\quad F/\hat{Z}\in\cyc_{\cM'}^{-1}(0)&\Longleftrightarrow\dim(F/\hat{Z})-\rho'(F/\hat{Z})=0\Longleftrightarrow \dim F-\rho(F)=\dim\hat{Z}-\rho(\hat{Z})\\
  &\Longleftrightarrow F\in\cyc_{\cM}^{-1}(\hat{Z}).\hspace*{8.4cm}\square
\end{align*}

\medskip

\textit{Proof of \cref{L-ExtendSpaces}:}
Throughout, let $E_i=\F_q^{n_i}$ and $E=\F_q^n$, where $n=n_1+n_2$.
Then $\pi_1:\F^n\longrightarrow \F^{n_1}$  and $\pi_2:\F^n\longrightarrow \F^{n_2}$ are the projections onto the first~$n_1$ and
last~$n_2$ coordinates, respectively.
Furthermore, let $V_i=\rs(M_i)$ for matrices $M_i\in\F^{k_i\times n_i}$ in RREF.
\\
(a) Clearly, any $V\in\cV$ has dimension at least~$k_1$ and since it is contained in $V_1\oplus E_2$, its dimension is at most $k_1+n_2$.
This proves the second line.
Choose now $V\leq E$  such that $\dim V=j\in\{k_1,\ldots,k_1+n_2\}$. 
Then $V\in\cV_1$ iff its matrix representation in RREF is of the form
\[
   \begin{pmatrix}M_1&T_{21}\\ 0&T_{22}\end{pmatrix}
\]
for some $T_{22}\in\F^{(j-k_1)\times n_2}$ of rank $j-k_1$ in RREF and $T_{21}\in\F^{k_1\times n_2}$ with zeros in the pivot columns of $T_{22}$.
Hence the set~$\cV_1$ is in bijection with the set of all choices of $T_{21}$ and $T_{22}$ subject to these conditions.
There are $\Gaussian{n_2}{j-k_1}_q$ choices for $T_{22}$ and $q^{k_1(n_2-j+k_1)}$ choices for~$T_{21}$.
This proves the statement.
\\
(b) Clearly, $\dim\pi_1(V)\in\{0,\ldots,n_1\}$ for any space $V\in\cV'$. Fix $j\in\{0,\ldots,n_1\}$ and let $V\leq E$. 
Then $V\in\cV'$ if and only if 
\[
  V=\rs\begin{pmatrix}T_1&\hat{T}\\0&M_2\end{pmatrix}
\]
for some $T_1\in\F^{j\times n_1}$ in RREF of rank~$j$ and $\hat{T}\in\F^{j\times n_2}$ with zeros in the pivot columns of~$M_2$.
The set~$\cV'$ is in bijection with the set of all choices of~$T_1$ and $\hat{T}$ subject to these conditions.
There are $\Gaussian{n_1}{j}_q$ choices for~$T_1$ and $q^{j(n_2-k_2)}$ choices for $\hat{T}$.
\hfill$\square$

\medskip

\textit{Proof of \cref{L-FlatsFree}:}
(a) The first identity of \eqref{e-DimRhoFree} is basic Linear Algebra and the second one is given in 
\cite[Proof of Prop.~3.9]{GLJ22Rep}.
We now turn to the identity for flats.
\\
``$\subseteq$'' Let $F\in\cF(\cM)$ and $x_1\in E_1\setminus(F\cap E_1)$. Then $x_1\in E\setminus F$. Hence $\rho(F+\subspace{x_1})>\rho(F)$.
Since $\pi_2(F+\subspace{x_1})=\pi_2(F)$, \eqref{e-DimRhoFree} implies
$\rho_1((F\cap E_1)+\subspace{x_1})=\rho_1((F+\subspace{x_1})\cap E_1)>\rho_1(F\cap E_1)$.
Thus $F\cap E_1\in\cF(\cM_1)$.
\\
``$\supseteq$''
Let $F\in\cL(E)$ be such that $F\cap E_1\in\cF(\cM_1)$ and  let $x\in E\setminus F$.
Suppose first that $x\in E_1$. Then $x\in E_1\setminus(F\cap E_1)$ and thus $\rho_1((F+\subspace{x})\cap E_1)=\rho_1((F\cap E_1)+\subspace{x})>\rho_1(F\cap E_1)$.
Thus \eqref{e-DimRhoFree} implies $\rho(F+\subspace{x})>\rho(F)$.
Let now $x=x_1+x_2$, where $x_i:=\pi_i(x)$ and $x_2\neq0$. If $x_2\not\in\pi_2(F)$, then \eqref{e-DimRhoFree} implies $\rho(F+\subspace{x})>\rho(F)$.
Thus let $x_2\in\pi_2(F)$. 
Then there exists $\hat{x}\in F$ such that $\pi_2(\hat{x})=x_2$. Hence $\tilde{x}=x-\hat{x}\in E_1\setminus(F\cap E_1)$ and 
$\rho(F+\subspace{x})=\rho(F+\subspace{\tilde{x}})>\rho(F)$ thanks to the previous case.
All of this shows that $F\in\cF(\cM)$.
\\
(b) \eqref{e-DimRhoTriv} is clear from \cref{R-MiPrime}. Let us turn to the identity for $\cF(\cM)$.
\\
``$\supseteq$'' Let $F=F_1\oplus E_2$ for some $F_1\in\cF(\cM_1)$ and let $x\in E\setminus F$. Then $\pi_1(x)\not\in F_1$ and thus $\rho(F+\subspace{x})=\rho_1(\pi_1(F+\subspace{x})=\rho_1(F_1+\subspace{\pi_1(x)})>\rho_1(F_1)=\rho(F)$.
\\
``$\subseteq$'' Let $F\in\cF(\cM)$ and set $F_1=\pi_1(F)$. Then $F\leq F_1\oplus E_2$.
Suppose first that $F\lneq F_1\oplus E_2$.
Then there exists $x_1\in F_1$ and $x_2\in E_2$ such that $x=x_1+x_2\not\in F$. 
Since $F_1=\pi_1(F)$ there exists $\tilde{x}=x_1+\tilde{x}_2\in F$ with $\tilde{x}_2\in E_2$.
Now $\rho(F)<\rho(F+\subspace{x})=\rho(F+\subspace{x-\tilde{x}})=\rho(F+\subspace{x_2-\tilde{x}_2})
=\rho_1(\pi_1(F+\subspace{x_2-\tilde{x}_2}))=\rho_1(F_1)=\rho(F)$, a contradiction. 
Thus $F=F_1\oplus E_2$. 
It remains to show that $F_1\in\cF(\cM_1)$. But this is clear because for any $x_1\in E_1\setminus F_1$ we have 
$x_1\not\in F$ and thus $\rho_1(F_1)=\rho(F)<\rho(F+\subspace{x_1})=\rho_1(F_1+\subspace{x_1})$.
\hfill$\square$


\begin{thebibliography}{10}

\bibitem{AlBy24}
G.~Alfarano and E.~Byrne.
\newblock The cyclic flats of a \mbox{$q$}-matroid.
\newblock {\em J. Algebraic Combin.}, 60:97--126, 2024.

\bibitem{AJNZ24}
G.~Alfarano, R.~Jurrius, A.~Neri, and F.~Zullo.
\newblock Representability of the direct sum of uniform \mbox{$q$}-matroid.
\newblock Preprint 2024. arXiv: 2408.00630.

\bibitem{BoDM08}
J.~E. Bonin and A.~{De Mier}.
\newblock The lattice of cyclic flats of a matroid.
\newblock {\em Ann. Comb.}, 12:155--170, 2008.

\bibitem{BCIJ24}
E.~Byrne, M.~Ceria, S.~Ionica, and R.~Jurrius.
\newblock Weighted subspace design from $q$-polymatroids.
\newblock {\em J. Combin. Theory, Series A}, 91, 2024.
\newblock doi.org/10.1016/j.jcta.2023.105799.

\bibitem{BCIJS23}
E.~Byrne, M.~Ceria, S.~Ionica, R.~Jurrius, and E.~Sa{\c{c}}ikara.
\newblock Constructions of new matroids and designs over {$\text{GF}(q)$}.
\newblock {\em Des. Codes Cryptogr.}, 91:451--473, 2023.

\bibitem{BCJ22}
E.~Byrne, M.~Ceria, and R.~Jurrius.
\newblock Constructions of new $q$-cryptomorphisms.
\newblock {\em J. Comb. Theory. Ser. B}, 153:149--194, 2022.

\bibitem{ByFu25}
E.~Byrne and A.~Fulcher.
\newblock Invariants of {T}utte partitions and a $q$-analogue.
\newblock {\em J. Comb. Theory. Ser. B}, 172:1--43, 2025.

\bibitem{CeJu24}
M.~Ceria and R.~Jurrius.
\newblock The direct sum of \mbox{$q$}-matroids.
\newblock {\em J. Algebr. Comb.}, 59:291--330, 2024.
\newblock See also arXiv: 2109.13637.

\bibitem{Cra69}
H.~Crapo.
\newblock The {T}utte polynomial.
\newblock {\em Aequ. Math.}, 3:211--229, 1969.

\bibitem{Ebe14}
J.~N. Eberhardt.
\newblock Computing the {T}utte polynomial of a matroid from its lattice of
  cyclic flats.
\newblock {\em Electron. J. Combin.}, 21:P3.47, 2014.

\bibitem{GhJo20}
S.~R. Ghorpade and T.~Johnsen.
\newblock A polymatroid approach to generalized weights of rank metric codes.
\newblock {\em Des. Codes Cryptogr.}, 88:2531--2546, 2020.

\bibitem{GLJ22Gen}
H.~Gluesing-Luerssen and B.~Jany.
\newblock $q$-{P}olymatroids and their relation to rank-metric codes.
\newblock {\em J. Algebr. Comb.}, 56:725--753, 2022.

\bibitem{GLJ22Rep}
H.~Gluesing-Luerssen and B.~Jany.
\newblock Representability of the direct sum of $q$-matroids.
\newblock arXiv: 2211.11626, 2022.

\bibitem{GLJ23C}
H.~Gluesing-Luerssen and B.~Jany.
\newblock Coproducts in categories of $q$-matroids.
\newblock {\em Eur. J. Comb.}, 112, 2023.
\newblock doi.org/10.1016/j.ejc.2023.103733.

\bibitem{GLJ24DSCyc}
H.~Gluesing-Luerssen and B.~Jany.
\newblock Decompositions of $q$-matroids using cyclic flats.
\newblock {\em SIAM J. Discrete Math.}, 38:2940--2970, 2024.

\bibitem{Gor21}
E.~Gorla.
\newblock Rank-metric codes.
\newblock In W.~C. Huffman, J.-L. Kim, and P.~Sol{\'e}, editors, {\em A Concise
  Encyclopedia of Coding Theory}, pages 227--250. CRC, 2021.
\newblock See also arXiv 1902.02650.

\bibitem{GJLR19}
E.~Gorla, R.~Jurrius, H.~L{\'o}pez, and A.~Ravagnani.
\newblock Rank-metric codes and $q$-polymatroids.
\newblock {\em J. Algebraic Combin.}, 52:1--19, 2020.

\bibitem{Ja23}
B.~Jany.
\newblock The projectivization matroid of a $q$-matroid.
\newblock {\em SIAM J. Appl. Algebra Geom.}, 7:386--413, 2023.

\bibitem{JPV23}
T.~Johnsen, R.~Pratihar, and H.~Verdure.
\newblock Weight spectra of {G}abidulin rank-metric codes and {B}etti numbers.
\newblock {\em S{\~{a}}o Paulo J. Math. Sci.}, 17:208--241, 2023.

\bibitem{JuPe18}
R.~Jurrius and R.~Pellikaan.
\newblock Defining the $q$-analogue of a matroid.
\newblock {\em Electron. J. Combin.}, 25:P3.2, 2018.

\bibitem{PlBae14}
W.~Plesken and T.~B{\"a}chler.
\newblock Counting polynomials for linear codes, hyperplane arrangements, and
  matroids.
\newblock {\em Doc. Math.}, 19:285--312, 2014.

\bibitem{Shi19}
K.~Shiromoto.
\newblock Codes with the rank metric and matroids.
\newblock {\em Des. Codes Cryptogr.}, 87:1765--1776, 2019.

\bibitem{Sta97}
R.~P. Stanley.
\newblock {\em Enumerative Combinatorics, Vol.~I}.
\newblock Cambridge University Press, 1997.


\end{thebibliography}
\end{document}